\pgfplotsset{%
    ,compat=1.16
    ,colormap={mygray}{rgb255(0cm)=(225,225,225); rgb255(1cm)=(255,255,255)}
    }
\newtheorem{theorem}{Theorem}
\newtheorem{conjecture}[theorem]{Conjecture}
\newtheorem{lemma}[theorem]{Lemma}
\newtheorem{corollary}[theorem]{Corollary}
\newtheorem{observation}[theorem]{Observation}
\newtheorem{proposition}[theorem]{Proposition}
\newtheorem{definition}[theorem]{Definition}
\begin{document}
\baselineskip 0.6cm

\title{Signed projective cubes, a homomorphism point of view}

\author[1]{Meirun Chen}  
\affil[1]{School of Mathematics and Statistics, Xiamen University of Technology,\linebreak Xiamen Fujian 361024, China. E-mail: mrchen@xmut.edu.cn, \vspace{2mm}}

\author[2]{Reza Naserasr}
\affil[2]{Université Paris Cité, CNRS, IRIF, F-75013, Paris, France.	
	E-mail: reza@irif.fr.}
\author[3]{Alessandra Sarti}
\affil[3]{UMR 7348 du CNRS, Université de Poitiers, 11 bd
	Marie et Pierre Curie, 86073 POITIERS, France. alessandra.sarti@cnrs.fr}

\date{}
\maketitle

\begin{center}
\begin{minipage}{150mm}

{\bf Abstract}
\noindent The (signed) projective cubes, as a special class of graphs closely related to the hypercubes, are on the crossroad of geometry, algebra, discrete mathematics and linear algebra. Defined as Cayley graphs on binary groups, they represent basic linear dependencies. Capturing the four-color theorem as a homomorphism target they show how mapping of discrete objects, namely graphs, may  relate to special mappings of plane to projective spaces of higher dimensions. 

\noindent In this work, viewed as a signed graph, first we present a number of equivalent definitions each of which leads to a different development. In particular, the new notion of common product of signed graphs is introduced which captures both Cartesian and tensor products of graphs.

\noindent We then have a look at some of their homomorphism  properties. We first introduce an inverse technique for the basic no-homomorphism lemma, using which we show that every signed projective cube is of circular chromatic number 4. Then observing that the 4-color theorem is about mapping planar graphs into signed projective cube of dimension 2, we study some conjectures in extension of 4CT. Toward a better understanding of these conjectures we present the notion of extended double cover as a key operation in formulating the conjectures. As a particular corollary of our result we build a highly symmetric triangle-free graph on 12 vertices with the property that every planar graph of odd-girth at least 7 maps to it. 

With a deeper look into connection between some of these graphs and algebraic geometry, we discover that projective cube of dimension 4, widely known as the Clebsh graph, but also known as Greenwood-Gleason graph, is the intersection graph of the 16 straight lines of an algebraic surface known as Segre surface, which is a Del Pezzo surface of degree 4. We note that an algebraic surface known as the Clebsch surface is one of the most symmetric presentations of a cubic surface. Recall that each smooth cubic surface contains 27 lines. Hence, from hereafter, we believe, a proper name for this graph should be Segre graph.

\vskip 0.6cm\noindent {\bf Keywords:} Signed graphs, hypercubes, Cayley graph, algebraic surfaces, homomorphisms.
\end{minipage}
\end{center}

\section{Introduction}\label{sec:Intro}
\setlength{\parindent}{0pt}

Built from hypercubes, \emph{signed projective cubes} play a key role in some of the most central problems in graph theory, such as the four-color theorem and several extensions of it. In this work we have an in depth look at the structure of these signed graphs and some of its homomorphism properties.
The study leads to several new notions that may shed lights to some of these problems. Notable concepts to be studied in this paper are: 1. the notion of ``common product'' which is defined on signed graphs and captures both Cartesian and categorical product. 2. Extended double cover of signed graphs.

We begin with establishing the basic notation and terminologies. A \emph{signed graph} $(G, \sigma)$ is a graph $G$ together with an assignment $\sigma$ of signs ($+$ or $-$) to the edges of $G$. A \emph{switching} of $(G, \sigma)$ at a cut $(X, V(G)\setminus X)$ is to multiply signs of the edges of the cut to $-$. Sign of a substructure of $(G, \sigma)$ is the product of the signs of its edges considering multiplicity. With respect to sign and parity of the length, there are four distinguished types of closed-walks and cycles in $(G, \sigma)$. Given a signed graph $(G,\sigma)$ and for each $ij$ in $\mathbb{Z}_2^2$ we define $g_{ij}(G, \sigma)$ to be the length of a shortest closed walk whose parity of the number of negative edges is $i$ and parity of the length is $j$. The \emph{unbalanced girth} (also known as the \emph{negative girth}) of $(G, \sigma)$, denoted $g_{-}(G,\sigma)$, is the length of its shortest negative cycle, setting it to $\infty$ if there is no such a cycle. It follows from the definitions that $g_{-}(G, \sigma)=\min \{g_{10}(G,\sigma), g_{11}(G, \sigma)\}$.

A \emph{homomorphism} of a signed graph $(G, \sigma)$ to a signed graph $(H,\pi)$ is a mapping $\varphi$ of $V(G)$ to $V(H)$ and $E(G)$ to $E(H)$ such that adjacencies and incidences, as well as signs of closed walks are preserved. The condition is equivalent to a switching $(G, \sigma')$ of $(G, \sigma)$ and a mapping that preserves not only adjacencies and incidences but also signs of the edges with respect to $\sigma'$ and $\pi$. We refer to \cite{NSZ21} for more details.

Following the definition we have a basic no-homomorphism lemma.

\begin{lemma}[The no-homomorphism lemma]\label{lem:No-Hom}
If $(G,\sigma)\to (H,\pi)$, then for each $ij \in \mathbb{Z}_2^2$ we have $g_{ij}(G,\sigma)\geq g_{ij}(H,\pi)$.
\end{lemma}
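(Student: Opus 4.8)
The plan is to use the defining properties of a homomorphism directly: such a map transports closed walks to closed walks while preserving both length and sign, and since each $g_{ij}$ is by definition the minimum length over a prescribed class of closed walks, it suffices to check that the image of a minimizing closed walk of $(G,\sigma)$ lands in the \emph{corresponding} class of $(H,\pi)$. First I would dispose of the trivial case: if $g_{ij}(G,\sigma)=\infty$, meaning $(G,\sigma)$ carries no closed walk of type $ij$, then the inequality $g_{ij}(G,\sigma)\geq g_{ij}(H,\pi)$ holds vacuously. So I may assume $g_{ij}(G,\sigma)$ is finite and fix a closed walk $W$ realizing it, namely a shortest closed walk whose length has parity $j$ and whose number of negative edges has parity $i$.

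Next I would track the image of $W$. Write $\varphi$ for the homomorphism and invoke the switching characterization stated in the excerpt: after replacing $\sigma$ by a switching-equivalent signature $\sigma'$, the map $\varphi$ preserves adjacency, incidence, and edge-signs with respect to $\sigma'$ and $\pi$. Because $\varphi$ preserves incidences, it sends $W=v_0 e_1 v_1 \cdots e_\ell v_0$ to a closed walk $\varphi(W)$ in $H$ of exactly the same length $\ell$: consecutive edges stay consecutive, and $v_0=v_\ell$ forces the image to close up, while any repetition of vertices or edges is harmless since we work with walks rather than cycles. Hence the length parity $j$ is preserved verbatim.

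For the sign I would use the switching invariance of the sign of a closed walk: switching at a cut flips an even number of the edges traversed by any closed walk, so the sign of $W$ under $\sigma$ equals its sign under $\sigma'$. Since $\varphi$ preserves signs with respect to $\sigma'$ and $\pi$, the sign of $\varphi(W)$ under $\pi$ equals the sign of $W$ under $\sigma'$, and therefore equals the sign of $W$ under $\sigma$. Translated into parities, the number of negative edges of $\varphi(W)$ has the same parity $i$. Thus $\varphi(W)$ is a closed walk of $(H,\pi)$ of length-parity $j$ and negative-edge-parity $i$, which gives $g_{ij}(H,\pi)\leq \ell = g_{ij}(G,\sigma)$, as required.

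The one point deserving care, and hence the main obstacle, is the sign bookkeeping through the switch: one must confirm that passing from $\sigma$ to $\sigma'$ alters neither the sign of $W$ nor the parity $i$, and that the sign-preservation enjoyed by $\varphi$ is genuinely with respect to $\sigma'$ rather than the original $\sigma$. Everything else, in particular the preservation of length and of its parity, is an immediate consequence of $\varphi$ being a walk-to-walk map.
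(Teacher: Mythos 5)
Your proof is correct and is essentially the argument the paper intends: the paper gives no explicit proof, treating the lemma as immediate from the definition of homomorphism (closed walks map to closed walks of the same length, and their signs --- equivalently, the parity of their negative edges --- are preserved), which is precisely what you spell out. Your extra care with the switching characterization, checking that passing from $\sigma$ to $\sigma'$ does not change the sign of a closed walk, is a correct and complete filling-in of the detail the paper leaves implicit.
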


Signed graphs considered in this work are simple in the sense that between each pair of vertices there is at most one edge of a given sign. That implies that there could be two edges connecting a given pair of vertices, one of each sign, in which case the subgraph induced by the two vertices is referred to as \emph{digon}. One may also allow one loop of each sign on a vertex, but in this work negative loops will mostly be forbidden. 
 
\subsection{Signed Cayley graphs}

As an extension of the notion of Cayley graphs we define \emph{signed Cayley graphs}.

Let $\Gamma$ be an Abelian group and let $S^+$ and $S^-$ be two subsets of $\Gamma$ with property that $-S^+=S^+$ and $-S^-=S^-$. The signed Cayley graph $(\Gamma,S^+, S^-)$ is defined to be the signed graph whose vertices are the elements of $\Gamma$ where two vertices are adjacent by a positive edge if and only if $x-y\in S^{+}$ and they are adjacent by a negative edge if and only if $x-y\in S^-$. In particular, if $0\in S^+$, then each vertex has a positive loop on it and similarly a negative loop if $0\in S^-$. The sets $S^-$ and $S^+$ are called \emph{difference sets}.

\begin{observation}\label{vertex-transitive}
 Any signed Cayley graph $(\Gamma,S^+, S^-)$ is vertex-transitive.
 \end{observation}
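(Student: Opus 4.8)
The plan is to exhibit, for each element of $\Gamma$, a translation automorphism of the signed graph and to observe that these translations already act transitively on the vertex set. For $g \in \Gamma$ define $\tau_g \colon \Gamma \to \Gamma$ by $\tau_g(x) = x + g$. Since $\Gamma$ is a group, $\tau_g$ is a bijection of the vertex set $\Gamma$ of $(\Gamma, S^+, S^-)$, with inverse $\tau_{-g}$.

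Next I would verify that $\tau_g$ is a sign-preserving automorphism of the signed graph. The central computation is that $\tau_g$ preserves differences: for any $x, y \in \Gamma$ we have $\tau_g(x) - \tau_g(y) = (x+g) - (y+g) = x - y$, where the cancellation uses that $\Gamma$ is abelian. Consequently $x - y \in S^+$ if and only if $\tau_g(x) - \tau_g(y) \in S^+$, so $xy$ is a positive edge exactly when $\tau_g(x)\tau_g(y)$ is; the identical argument with $S^-$ shows negative edges are sent to negative edges. Thus $\tau_g$ preserves adjacency, incidence, and the sign of every edge, hence also the sign of every closed walk, and is therefore an automorphism of $(\Gamma, S^+, S^-)$. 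Loops arising from $0 \in S^{\pm}$ and digons arising from $S^+ \cap S^- \neq \emptyset$ are handled by the very same difference computation.

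Finally, transitivity follows directly: given two vertices $a, b \in \Gamma$, the translation $\tau_{b-a}$ is an automorphism with $\tau_{b-a}(a) = b$. Hence the automorphism group acts transitively on the vertex set, which is exactly the claim.

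The argument has essentially no obstacle; the only point requiring care is the verification that $\tau_g$ respects the signs, and this reduces entirely to the identity $\tau_g(x) - \tau_g(y) = x - y$, which is precisely where commutativity of $\Gamma$ is used. Should one prefer to define automorphisms of signed graphs only up to switching equivalence, the conclusion holds \emph{a fortiori}, since the maps produced here are genuinely sign-preserving.
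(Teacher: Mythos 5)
Your proof is correct and is essentially the paper's argument: the paper's map $f(u)=u+y-x$ is exactly your translation $\tau_{y-x}$, and you have merely spelled out the verification (bijectivity and preservation of signed adjacency via difference invariance) that the paper leaves implicit.
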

 \begin{proof}
  For given vertices $x$ and $y$, the mapping $f(u)=u+y-x$ is an automorphism
that maps $x$ to $y$.
 \end{proof}

When $\Gamma$ is a binary group, the signed Cayley graph $(\Gamma,S^+, S^-)$ will be referred as a \emph{signed binary Cayley graph} or as a \emph{signed cube-like graph}.

A special class of signed cube-like graphs are the signed projective cubes which are the central part of this work. They are defined in numerous equivalent ways in the next section. 

\section{Signed Projective Cubes}\label{sec:SPC}

In this section signed projective cubes are defined in several different ways. 
Each definition, given in a separate subsection, gives a new insight to these graphs. The first definition is the one from which we have derived the name. A historical note is given at the end of the section.

\subsection{As projections of the hypercubes}

Recall that hypercube of dimension $n$, denoted $H(n)$, is the Cayley graph $(\mathbb{Z}_2^n, \{e_1, e_2, \ldots, e_n\})$ where $e_i$'s are the standard basis. A pair of vertices are said to be antipodal if they are at distance $n$, i.e., they differ at any coordinate.

\begin{definition}\label{def:AsTheProjectiveCube}
The \emph{projective cube}  of dimension $k$, denoted $PC(k)$, is the graph obtained from the hypercube of dimension $k+1$ by identifying antipodal pairs of vertices. The signed projective cube of dimension $k$, denoted $SPC(k)$, is then obtained from $PC(k)$ by assigning a positive sign to the edges that correspond to one of the first $k$ coordinates of $H_{k+1}$ and a negative sign to those that correspond to the last coordinate of $H_{k+1}$, that is the coordinate which is eliminated.
\end{definition}

One may view $H_n$ as the skeleton of the standard unit cube in $\mathbb{R}^n$ with $O$ at the center of it. With such a view, when building projective space from $\mathbb{R}^{k+1}$ the image of $H_{k+1}$ is the graph $PC(k)$. Thus, in some sense, one may view projective cubes as unite cubes in projective spaces.

\subsection{As augmented cubes} 

An alternative definition, but also based on hypercubes is as follows.

\begin{definition}\label{def:As/augmentedHypercube}
The signed projective cube of dimension $k$ is the graph built from the hypercube of dimension $k$ all whose edges are positive by adding a negative edge between each pair of antipodal vertices. 
\end{definition}

To observe that the two definitions are equivalent, consider an inductive definition of $H(k+1)$.  In such a definition, $H(k+1)$ is built from two disjoint copies $H_1$ and $H_2$ of $H(k)$ by adding a perfect matching that connects corresponding pairs of vertices from two disjoint copies.
To find the antipodal of a vertex $x_1$  in $H_1$ part of $H(k+1)$, we first must find $\overline {x}_1$, the antipodal of $x_1$ in $H_1$, then $\overline {x}_2$, the  match of $\overline {x}_1$ in $H_2$, is the antipodal of $x_1$ in $H(k+1)$. This mapping of vertices of $H_1$ to their antipodals in $H_2$ is also an isomorphism of $H_1$ to $H_2$. Thus, when the vertices of $H_2$ are projected onto their antipodal vertices in $H_1$, the edges of $H_2$ are mapped to the edges of $H_1$. Therefore, the resulting graph of this projection is the graph built on $H_1$ where the only new edges are the images of the matching between $H_1$ and $H_2$ which are precisely the edges that are given a negative sign. Such images connect exactly the  antipodal pairs of $H_1$.

\subsection{As Cayley graphs}

Definition~\ref{def:AsTheProjectiveCube} in turn invokes an algebraic definition of the projective cubes. Noting that the antipodal of a vertex $x$ in the hypercube $H(k)$ is the vertex $x+J$ (where $J$ is the all-1 vector) we have the following equivalent definition of the signed projective cubes.

\begin{definition}\label{def:AsCayleyGraphs}
The signed projective cube of dimension $k$ is the Cayley graph $(\mathbb{Z}_{2}^k, \{e_1,e_2, \ldots, e_k\}, \{ J \})$ where the $e_i$'s are the standard basis and $J$ is the all-1 vector.
\end{definition}

\begin{figure}[ht]
\centering 

\begin{minipage}{1\textwidth}
\centering
\begin{tikzpicture}
\foreach \i in {1,...,16}
{
\draw[rotate=-22.5*(\i-3)] (3.2, 3.2) node[circle, fill=white, draw=black!80, inner sep=0mm, minimum size=2mm]  (\i){};
}

\foreach \i/\j in {1/16, 2/3, 4/13, 5/12, 6/7, 8/9, 10/11, 14/15}
{
    
\draw[thick, blue] (\i) -- (\j);
}

\foreach \i/\j in {1/14, 2/13, 3/4, 5/10, 6/9, 7/8, 11/12, 15/16}
{
    
\draw[thick, teal] (\i) -- (\j);
}

\foreach \i/\j in {1/8,2/11,3/10,4/5,6/15,7/14,9/16,12/13}
{
    
\draw[thick, brown] (\i) -- (\j);
}

\foreach \i/\j in {1/2,3/16,4/15,5/6,7/12,8/11,9/10,13/14}
{
    
\draw[thick, purple] (\i) -- (\j);
}

\foreach \i/\j in { 1,2,3,4,9,10,11,12}
{
    
\draw[thick, red] (\i) -- (\the\numexpr\i+4);
}

\end{tikzpicture}
\caption{An edge-colored presentation of $PC(4)$: $(\mathbb{Z}_2^4, ({\color{brown} e_1}, {\color{teal} e_2}, {\color{blue} e_3}, {\color{purple} e_4}, {\color{red} J}))$\textsuperscript{\tiny 1}}
\label{fig:CirculantPC(4)}
\end{minipage}
\end{figure}

\footnotetext[1]{A labeling of vertcies starting at the top and in the clockwise orientation:\\ $\;  0000, 1000,1100,1110,1111,0111,0011,0001,0101,1101,1001,1011,1010,0010,0110,0100$}

This definition leads to a natural edge-coloring of $PC(k)$, where each edge is colored with its corresponding element from  $\{e_1,e_2, \ldots, e_k, J \}$. Observe that, since we are working on a binary group, this is indeed a proper edge-coloring, i.e., edges incident to the same vertex receive distinct colors. See Figure~\ref{fig:CirculantPC(4)} for an edge-colored presentation of $PC(4)$ (commonly known as the Clebsch graph). It is important to note that, in this definition, edges corresponding to $J$ are not really different from those corresponding to $e_i$'s. Indeed, signed projective cubes are highly symmetric as we will see later. To see this basic symmetry, in the following proposition, we give a Cayley graph presentation of $SPC(k)$ in a more general setting. 

\begin{proposition}
	Let $S=\{s_1,s_2,\ldots, s_k\}$ be a linearly independent subset of $\mathbb{Z}_{2}^n$ and let $s^*=s_1+s_2+\cdots+s_k$. Then the signed Cayley graph $(\mathbb{Z}_{2}^n, S, \{s^*\})$ consists of $2^{n-k-1}$  isomorphic connected components, each isomorphic to $SPC(k)$.
\end{proposition}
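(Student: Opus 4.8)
The plan is to use the standard description of the connected components of a Cayley graph as the cosets of the subgroup generated by its connection set, and then to identify each coset, as a signed graph, with $SPC(k)$.

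First I would compute the subgroup $\Lambda=\langle S\cup\{s^*\}\rangle$ generated by all the difference elements. Since $s^*=s_1+\cdots+s_k$ already lies in the span of $S$, we get $\Lambda=\langle s_1,\ldots,s_k\rangle$; because $S$ is linearly independent, $\Lambda$ is isomorphic to $\mathbb{Z}_2^k$ with $S$ as a basis and has exactly $2^k$ elements. In any signed Cayley graph an edge joins two vertices only if their difference lies in $S\cup\{s^*\}\subseteq\Lambda$, so no edge crosses between distinct cosets of $\Lambda$; conversely any two vertices of a single coset are linked by a walk. Hence the connected components are exactly the cosets $g+\Lambda$, and their number is the index $[\mathbb{Z}_2^n:\Lambda]=2^{n}/2^{k}=2^{n-k}$.

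Next I would produce the isomorphism on the component of $0$, which is $\Lambda$ itself. I define $\varphi\colon\Lambda\to\mathbb{Z}_2^k$ by $\varphi(s_i)=e_i$ and extend linearly; as $S$ is a basis of $\Lambda$ this is a well-defined group isomorphism onto $\mathbb{Z}_2^k$. It carries the positive difference set $S$ bijectively onto $\{e_1,\ldots,e_k\}$ and sends $s^*=\sum_i s_i$ to $\sum_i e_i=J$. Consequently $\varphi$ maps positive edges to positive edges and negative edges to negative edges, which is precisely the signed Cayley presentation of $SPC(k)$ from Definition~\ref{def:AsCayleyGraphs}; thus the component of $0$ is isomorphic, as a signed graph, to $SPC(k)$. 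To cover the remaining components I would invoke Observation~\ref{vertex-transitive}: for each $g$ the translation $u\mapsto u+g$ is an automorphism of the whole signed Cayley graph carrying $\Lambda$ onto the coset $g+\Lambda$, so every coset induces a subgraph isomorphic to the component of $0$, hence to $SPC(k)$.

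The one point that deserves care, and which I regard as the crux, is checking that $\varphi$ is an isomorphism of signed graphs and not merely of the underlying graphs: one must verify that it respects the split of edges into the positive class and the negative class, and this is exactly where the identity $\varphi(s^*)=J$ does the essential work. The linear independence of $S$ plays a double role here — it is what makes $\varphi$ a bijection, and, for $k\ge 2$, what guarantees $s^*\notin S\cup\{0\}$ so that the negative edges are genuinely distinct from the positive ones and from loops. Everything else reduces to the routine coset bookkeeping described above.
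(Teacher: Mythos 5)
Your argument is sound and is the natural one: the components of a Cayley graph are the cosets of the subgroup generated by the connection set, translations give sign-preserving isomorphisms between cosets (this is exactly Observation~\ref{vertex-transitive}), and the linear extension of $s_i\mapsto e_i$ identifies the component of $0$ with the presentation $(\mathbb{Z}_2^k,\{e_1,\ldots,e_k\},\{J\})$ of Definition~\ref{def:AsCayleyGraphs}. The paper states this proposition without proof, so there is nothing to compare line by line; your route is surely the intended one, including the remark that independence of $S$ forces $s^*\notin S\cup\{0\}$ when $k\geq 2$, so the negative edges are genuinely new.

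The one thing you fail to address, and it is the only quantitative content of the statement, is that your count disagrees with the proposition as printed. You correctly obtain $[\mathbb{Z}_2^n:\langle S\rangle]=2^{n-k}$ components, but the statement claims $2^{n-k-1}$. These differ by a factor of two, and it is the printed constant that is wrong, not your computation: each component has $2^k$ vertices (so $2^{n-k-1}$ components would account for only $2^{n-1}$ of the $2^n$ vertices), and in the degenerate case $n=k$, where $S$ is a basis, the graph is connected while the printed formula gives $2^{-1}$. Your count is also the one forced by the paper's own surrounding text: the construction from cyclically consecutive pairs of coordinates is said to yield \emph{two} disjoint copies of a signed projective cube, and $pow(C_{-(n+1)})$ is said to have \emph{two} components, each isomorphic to $SPC(n)$; both instances match $2^{n-k}$ and contradict $2^{n-k-1}$. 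So your proof is correct, but you have silently proved a corrected statement; a complete answer must flag that the exponent in the proposition should read $n-k$ rather than $n-k-1$, since otherwise the reader is left with a proof that does not establish what was claimed.
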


In particular, if we consider the set of vectors in  $\mathbb{Z}_{2}^k$ with exactly two nonzero coordinates which are also (cyclically) consecutive, and choose an arbitrary one as $s^*$ while putting the rest in $S$, we obtain two disjoint copies of $SPC(k)$. From this view it is apparent that the signed graph $SPC(k)$ is edge-transitive with respect to switching-isomorphism.

More generally if we take two sets $S^+$ and $S^-$ of the elements of $\mathbb{Z}_{2}^k$ such that the sum of the elements of the two are equal, that this is the only linear dependency formula in $S^+\cup S^-$, that $S^+\cup S^-$ generates $\mathbb{Z}_{2}^k$, and that $S^-$ is of an odd order, then the signed Cayley graph $(\mathbb{Z}_{2}^k, S^+, S^-)$ is switching equivalent to $SPC(k)$. To observe this, given any two elements $s_1$ and $s_2$ of $S^+\cup S^-$, we define $X_o$ (respectively, $X_e$) to be the set of vertices $u$ where in generating $u$ by $S^+\cup S^-$ only one of $s_1$ or $s_2$ is employed (respectively either both or none of $s_1$ or $s_2$ are employed). Then $(X_o, X_e)$ is a partition of vertices where any edge between the two parts either corresponds to $s_1$ or to $s_2$. Thus a switching at this edge-cut is equivalent to repositioning $s_1$ and $s_2$ inside $S^+\cup S^-$, with an emphasis on the fact that both of them must be swapped. In particular, if $S^+$ is of an even order, then all its elements, after a pairing, can be moved to $S^-$, showing that $SPC(2k)$ is switching equivalent to $(PC(2k),-)$ where all edges are negative.

\subsection{As power graphs of cycles}

We first recall the notion of power graph from \cite{BNT15}. Given a graph $G$, let $2^{V(G)}$ be the (binary) group whose elements are the subsets of $V(G)$ and whose operation is the symmetric difference. As defined in \cite{BNT15}, the power graph $G$, denoted $pow(G)$, is the Cayley graph $(2^{V(G)}, E(G))$. Extending this definition, we define the \emph{power graph} of a signed graph $(G, \sigma)$, denoted $pow(G, \sigma)$, to be the signed Cayley graph $(2^{V(G)}, E^+(G, \sigma), E^-(G, \sigma))$.

Observe that, assuming $G$ is connected, $pow(G)$ and $pow(G, \sigma)$ would always consist of two isomorphic components: one induced on the vertices of even order and the other on the vertices of odd order. As mentioned in \cite{BNT15}, the power graph of $P_{n+1}$ consists of two isomorphic copies of the hypercube of dimension $n$. The power graph of $C_{n+1}$ consists of two isomorphic copies of $PC(n)$. Moreover, if we take the signed cycle $C_{-(n+1)}$ instead, then its power graph consists of two isomorphic copies of $SPC(n)$. More precisely:

\begin{definition}\label{def:PowerGraphofC_n}
	Given a signed cycle $C_{-(n+1)}$, the power graph $pow(C_{-(n+1)})$ consists of two isomorphic connected components each of which is a signed projective cube of dimension $n$.
\end{definition}

The isomorphism between this definition and the previous definition is clear by taking the set of vectors with two (cyclically) consecutive 1 as the difference set when we want to apply the definition as a signed Cayley graph. When vertices of $C_{-n-1}$ are labeled by $1,2,\ldots, n+1$ in the cyclic order, then its edges correspond to this set of vectors.

\subsection{Constructed from posets}

When hypercubes are viewed as poset graphs, the projective cubes, built from the projection of hypercubes, find a new definition which provides quite an insightful view to these signed graphs. Recall that in the definition by projection, antipodal vertices of the hypercube $H(k+1)$ are identified in order to form $PC(k)$. 
In the classic definition of the hypercube $H(n)$, each vertex $v$ is a vector in $\mathbb{Z}_2^n$. By associating the set of coordinates at which $v$ is 1, we obtain a presentation of $H(n)$ with vertices as the subsets of an $n$-set where vertices $A$ and $B$ are adjacent if $A\subset B$ and $|A|=|B|-1$. We refer to this as the poset representation of $H(n)$. 
In such a representation, the antipodal of a vertex (of $H(k+1)$) is its complement. Thus, when $PC(k)$ is formed from $H(k+1)$ by identifying antipodal pairs of vertices, each new vertex receives a label of two complementary subsets of the reference $(k+1)$-set (the set which is used to view $H(k+1)$ as a poset). The transition to the signed projective cubes is as follows:

\begin{definition}\label{def:PosetGraph}
	Let $S=S^+ \cup S^-$ be a set of size $k+1$ where $S^+$ is of size $k$ and $S^-$ has a single element. Then $SPC(k)$ is a graph whose vertices are pairs of complementary subsets $\{A, \bar{A}\}$ of $S$ where vertices $\{A, \bar{A}\}$ and $\{B, \bar{B}\}$ are adjacent if the symmetric difference of $A$ and $B$ consists of a single element $s$. Furthermore, the corresponding edge is positive if $s$ is in $S^+$ and it is negative if $s\in S^-$.
\end{definition}

We note that in this definition $\{A, \bar{A}\}$ is an unordered pair. For example if $B$ and $\bar{A}$ have a symmetric difference of size one, then the two vertices are also adjacent.   

The canonical isomorphism with the definition as a Cayley signed graph is provided by setting $S^+=\{e_1,e_2, \ldots, e_{k}\}$ and $S^-=\{J \}$. The condition that $S^-$ has single element can be replaced with ``$S^-$ has an odd number of elements", in which case we get a switching equivalent copy of $SPC(k)$ as discussed before.

In this poset presentation of $PC(k)$, a vertex $\{A, \bar{A} \}$ may be simply represented by the smaller of the two sets $A$ and $\bar{A}$. With such a labeling, the vertices of $PC(k)$ are subsets of order at most $\lfloor \frac{k}{2}\rfloor$ of $S_{k}=\{e_1,e_2, \ldots, e_{k},J \}$. However, to present vertices of $PC(2i-1)$, when $|A|=|\bar{A}|=i$ one must make a choice between $A$ and $\bar{A}$, but indeed any choice would be ok.  

With such a labeling of vertices, the distance between two vertices can be computed by the following formula.

\begin{proposition}\label{prop:ShortestDistInPCk}
	The distance between vertices $A$ and $B$ of the projective cube $PC(k)$ is $\min \{|A \oplus B|, k+1-|A \oplus B|\}$. More precisely, these two numbers correspond to the lengths of shortest positive and negative $A-B$ paths in $SPC(k)$. 
\end{proposition}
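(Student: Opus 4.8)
The plan is to reduce everything to a short algebraic computation in $\mathbb{Z}_2^k$, combining the Cayley description of Definition~\ref{def:AsCayleyGraphs} with the poset description of Definition~\ref{def:PosetGraph}. Write $S=\{e_1,\dots,e_k,J\}$, and recall that, as an unsigned graph, $PC(k)$ is the Cayley graph of $\mathbb{Z}_2^k$ with connection set $S$, while a vertex presented by a subset $A\subseteq S$ corresponds to the group element $a:=\sum_{s\in A}s$. Since every generator is an involution and the group is abelian, the distance from $A$ to $B$ equals the least number of generators (counted with repetition) whose sum is $a+b=\sum_{s\in A\oplus B}s$; and because repeating a generator cancels in pairs, this least number is the minimum size of a subset $T\subseteq S$ with $\sum_{s\in T}s=a+b$.

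First I would pin down these subsets $T$. The map $2^{S}\to\mathbb{Z}_2^k$ sending $T\mapsto\sum_{s\in T}s$ is a homomorphism whose kernel, by the independence of $e_1,\dots,e_k$ together with the single relation $e_1+\cdots+e_k+J=0$, is exactly $\{\emptyset,S\}$. Hence each fibre consists of precisely two complementary subsets; for the element $a+b$ these are $A\oplus B$ and its complement $S\setminus(A\oplus B)$, of sizes $|A\oplus B|$ and $k+1-|A\oplus B|$. Taking the smaller one yields $d(A,B)=\min\{|A\oplus B|,\,k+1-|A\oplus B|\}$, which is the first assertion. (For $k=1$ the connection set has a repeated element and the two ``paths'' are simply the two edges of a digon; for $k\ge 2$ the argument is literal.)

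For the refined statement I would track the negative generator $J$. A path is negative exactly when it uses $J$ an odd number of times; after cancelling repeated generators any positive path reduces to a subset of $S$ not containing $J$, and any negative path to one containing $J$, without increasing its length. Of the two complementary subsets $A\oplus B$ and $S\setminus(A\oplus B)$ exactly one contains $J$: this one is forced to be the reduction of any negative path and the other the reduction of any positive path. Inserting $J$ twice (the only way to preserve the parity while re-using it) strictly lengthens the walk, so the $J$-free subset gives a \emph{shortest} positive $A$–$B$ path and the $J$-containing subset a \emph{shortest} negative one. Thus the two values $|A\oplus B|$ and $k+1-|A\oplus B|$ are the lengths of a shortest positive and a shortest negative $A$–$B$ path, the assignment being governed by whether $J\in A\oplus B$.

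The computations themselves are short; the step that needs care is the passage from walks in the signed graph to subsets of $S$. One must argue that length minimisation may be carried out over subsets rather than multisets, that the two relevant subsets are forced to be complementary (this is where the single linear dependency is essential), and that the sign of a path is exactly the parity of the multiplicity of $J$, so that no shorter path of a prescribed sign can arise by re-using generators. An equivalent route, which I would keep in reserve, is to lift walks through the double cover $H(k+1)\to PC(k)$ of Definition~\ref{def:AsTheProjectiveCube}: for $k\ge 2$ no two neighbours of a vertex of $H(k+1)$ are antipodal, so a shortest $A$–$B$ path in $PC(k)$ lifts uniquely to a walk in $H(k+1)$ ending at $B$ or at its antipode $\bar B$, reproducing the same two distances $|A\oplus B|$ and $(k+1)-|A\oplus B|$ and the same sign dichotomy.
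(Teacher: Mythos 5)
Your proposal is correct. Note that the paper itself states Proposition~\ref{prop:ShortestDistInPCk} without any proof --- it is offered as an immediate consequence of the poset labeling --- so there is no argument of the paper to compare against; your write-up supplies precisely the details left implicit. Your route is the natural one suggested by the paper's own definitions: pass to the Cayley presentation $(\mathbb{Z}_2^k,\{e_1,\dots,e_k,J\})$, identify distance with word length, use involutivity to replace multisets of generators by subsets, and observe that the kernel of $T\mapsto \sum_{s\in T}s$ is $\{\emptyset,S\}$ (the single dependency $e_1+\cdots+e_k+J=0$), so each fibre is a complementary pair $\{A\oplus B,\,S\setminus(A\oplus B)\}$; tracking the parity of the multiplicity of $J$, which cancellation preserves, then yields the positive/negative dichotomy. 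One small detail worth making explicit: the proposition speaks of \emph{paths}, and for a prescribed sign one cannot simply extract a path from a shortest walk (deleting a closed subwalk may flip the sign). This is harmless in your framework, and for the same reason your fibre computation works: the walk realized by a subset $T\subsetneq S$ uses distinct generators, and since no nonempty proper subset of $S$ sums to $0$, its partial sums are pairwise distinct, so the realized walk is in fact a path. With that one sentence added, the argument is complete; your reserve argument via the double cover $H(k+1)\to PC(k)$ is also valid and matches the paper's Definition~\ref{def:AsTheProjectiveCube}, but is not needed.
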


Moreover, given a pair $A$ and $B$ of vertices at distance $i+j$ ($d(A,B)=i+j$) the set of vertices at distance $i$ from $A$ and $j$ from $B$ determines $A\oplus B$. This is stated more precisely in the next proposition but taking $A=\emptyset$. For the general choice of $A$, it would suffice to take the symmetric difference with $A$.

\begin{proposition}\label{prop:i+j=dVertices}
Given a vertex $\{B, \bar{B}\}$ ($|B|< |\bar{B}|$) of $PC(k)$ and positive integers $i$ and $j$ such that $i+j=|B|$, the union of vertices (sets) at distance $i$ from $\emptyset$ and $j$ from $B$ is the set $B$. Furthermore, if $|B|= |\bar{B}|$ (thus $k$ being odd), then the internal vertices of all $\tfrac{k+1}{2}$-paths connecting $\emptyset$ and $B$ induce two connected components. The union of vertices at distance $i$, $(i \leq \tfrac{k+1}{2})$, from $\emptyset$ in one component is $B$ and in the other is $\bar{B}$.
\end{proposition}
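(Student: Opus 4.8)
The plan is to work throughout in the poset representation of Definition~\ref{def:PosetGraph}, representing each vertex by its minimal set, and to use the distance formula of Proposition~\ref{prop:ShortestDistInPCk} in the form $d(\{X,\bar X\},\{Y,\bar Y\})=\min\{|X\oplus Y|,\,k+1-|X\oplus Y|\}$. The whole argument then reduces to elementary set arithmetic, the only structural input being the inequality $2|B|\le k+1$: strict in the first assertion and an equality in the second.

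For the first assertion I would begin by identifying the relevant vertices. Fix $i,j\ge 1$ with $i+j=|B|$ and suppose $v=\{Y,\bar Y\}$ satisfies $d(\emptyset,v)=i$ and $d(v,B)=j$. Since $i<|B|<\tfrac{k+1}{2}$, the distance formula forces the minimal representative $Y$ to have $|Y|=i$. Writing $d(v,B)=\min\{|Y\oplus B|,\,k+1-|Y\oplus B|\}=j$ and noting $|Y\oplus B|\le |Y|+|B|=2i+j<2|B|\le k+1$, the alternative $k+1-|Y\oplus B|=j$ is impossible (it would give $k+1\le 2|B|$), so $|Y\oplus B|=j$. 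Then $|Y\cap B|=\tfrac12\bigl(|Y|+|B|-|Y\oplus B|\bigr)=i=|Y|$, i.e.\ $Y\subseteq B$. Conversely, every $i$-subset $Y$ of $B$ yields $d(\emptyset,v)=i$ and $d(v,B)=|B\setminus Y|=j$. Hence the vertices sought are exactly the $i$-subsets of $B$, and since $i\ge 1$ each element of $B$ lies in at least one of them, so their union is precisely $B$.

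For the second assertion, now $|B|=\tfrac{k+1}{2}$, so $\{B,\bar B\}$ sits at the extremal distance $\tfrac{k+1}{2}$ from $\emptyset$ and both lifts $B$ and $\bar B$ are at that same distance in the hypercube $H(k+1)$. I would lift each $\tfrac{k+1}{2}$-path from $\emptyset$ to $\{B,\bar B\}$, starting the lift at $\emptyset$, to a walk of the same length in $H(k+1)$; since its endpoint is $B$ or $\bar B$ and the length equals the hypercube distance, the lift is a geodesic of $H(k+1)$, hence a monotone chain. Consequently the internal vertices of such a path are proper nonempty subsets of $B$ (one family $\mathcal F_B$) or of $\bar B$ (the family $\mathcal F_{\bar B}$). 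These families are disjoint as vertex sets, and no edge joins them: for nonempty $Y\subsetneq B$ and $Z\subsetneq\bar B$ one has $|Y\oplus Z|=|Y|+|Z|\ge 2$ and, since $Y\subseteq B\subseteq\bar Z$, also $|Y\oplus\bar Z|=|\bar Z|-|Y|\ge 2$, so neither symmetric difference equals $1$. Each family is the set of proper nonempty subsets of a $\tfrac{k+1}{2}$-set under ``symmetric difference one'' adjacency, i.e.\ the hypercube $H(\tfrac{k+1}{2})$ with its top and bottom deleted, which is connected once $\tfrac{k+1}{2}\ge 3$; this produces the two components. Finally, within $\mathcal F_B$ the vertices at distance $i$ from $\emptyset$ are exactly the $i$-subsets of $B$, whose union is $B$, and symmetrically $\mathcal F_{\bar B}$ gives $\bar B$.

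The computational steps are routine; the part that needs care is the second statement. The genuine work is the lifting argument certifying that every shortest $\emptyset$--$\{B,\bar B\}$ path is monotone in $H(k+1)$, so that its interior lies entirely on the $B$-side or entirely on the $\bar B$-side, together with the verification that no edge crosses between the two sides. I expect the connectivity of each family to be the only place where a real hypothesis on $k$ intervenes: for $k=3$ (so $\tfrac{k+1}{2}=2$) each family degenerates to two nonadjacent vertices, so ``two connected components'' should be read with $k\ge 5$ in mind, while the union description of each level remains valid for all odd $k$.
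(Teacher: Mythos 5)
Since the paper states Proposition~\ref{prop:i+j=dVertices} without proof (like the neighbouring propositions in Section~2.5, it is left to the reader), there is no authors' argument to compare yours against; judged on its own, your proof is correct, and it uses exactly the machinery the paper sets up for this purpose: the poset representation of Definition~\ref{def:PosetGraph}, the distance formula of Proposition~\ref{prop:ShortestDistInPCk}, and, for the second assertion, lifting geodesics through the antipodal quotient $H(k+1)\to PC(k)$. Two remarks. First, when you identify the family $\mathcal{F}_B$ of internal vertices on the $B$-side with \emph{all} proper nonempty subsets of $B$, you silently use the converse of your lifting step: every maximal chain $\emptyset \subset Y_1 \subset \cdots \subset B$ in $H(k+1)$ projects to a genuine path (not merely a walk) of length $\tfrac{k+1}{2}$ in $PC(k)$. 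This is routine --- $Y_s = \bar{Y}_t$ would force $s+t=k+1$, which cannot happen for internal vertices of the chain --- but it is the step guaranteeing that every $i$-subset of $B$ actually occurs as an internal vertex, and hence that the stated unions equal $B$ and $\bar{B}$ rather than merely being contained in them; it deserves a sentence. Second, your caveat about $k=3$ is a correct observation about the statement itself: there the internal vertices are the four singletons, which are pairwise non-adjacent, so the induced subgraph has four components, and the ``two connected components'' claim requires $k\geq 5$ (equivalently $\tfrac{k+1}{2}\geq 3$, where your appeal to the $n$-connectivity of the hypercube applies).
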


The poset definition leads to strong connection between (signed) projective cubes and many well known families of graphs. In particular, for even values of $k$, $k=2i$, the set of vertices $\{A, \bar{A}\}$ with $|A|=i$ induces a subgraph isomorphic to a special family of Kneser graphs known as the odd graphs. We leave the proof as an exercise.

\begin{proposition}
	In the poset presentation of $PC(2i)$, the subgraph induced by vertices $\{ A, \bar{A} \}$, $|A|=i$, is isomorphic to the Kneser graph $K(2i+1, i)$.
\end{proposition}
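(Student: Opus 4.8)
The plan is to exhibit an explicit isomorphism by matching up the two vertex sets and then checking that the two adjacency relations coincide. First I would fix the reference $(2i+1)$-set $S$ underlying the poset presentation of $PC(2i)$ and restrict attention to the vertices $\{A, \bar{A}\}$ with $|A| = i$. Since $|S| = 2i+1$ is odd, such a vertex has $|\bar{A}| = i+1 \neq i$, so exactly one of the two complementary sets has size $i$. Hence each such vertex is faithfully represented by its unique $i$-element side, and the assignment $\{A, \bar{A}\} \mapsto A$ is a bijection from this set of vertices onto the collection of all $i$-subsets of $S$, which is precisely the vertex set of $K(2i+1, i)$.

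Next I would translate the adjacency relation. By Definition~\ref{def:PosetGraph} together with the remark following it, two vertices $\{A, \bar{A}\}$ and $\{B, \bar{B}\}$ are adjacent in $PC(2i)$ exactly when one of the symmetric differences $A \oplus B$ or $A \oplus \bar{B}$ has size $1$; the other two comparisons, $\bar{A} \oplus \bar{B}$ and $\bar{A} \oplus B$, give the same two values, because complementing both arguments leaves a symmetric difference unchanged. Working inside $S$ one has $A \oplus \bar{B} = \overline{A \oplus B}$, so $|A \oplus \bar{B}| = (2i+1) - |A \oplus B|$; adjacency therefore means $|A \oplus B| \in \{1, 2i\}$.

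Then I would specialize to these representatives. For two $i$-subsets $A, B$ of $S$ we have $|A \oplus B| = |A| + |B| - 2|A \cap B| = 2i - 2|A \cap B|$, which is always even. Consequently the value $1$ is never attained, while $|A \oplus B| = 2i$ holds if and only if $|A \cap B| = 0$, that is, $A$ and $B$ are disjoint. Thus the restricted adjacency is exactly disjointness of the $i$-subsets, which is the defining adjacency of $K(2i+1, i)$; combined with the bijection above this yields the claimed isomorphism.

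I expect no serious obstacle: the only point requiring care is not to forget the second clause of the adjacency rule, the one arising from the unordered nature of the pair $\{A, \bar{A}\}$. Overlooking it would suggest that the size-$i$ vertices form an independent set, since two distinct $i$-subsets never differ by a single element; it is precisely the complementary comparison $A \oplus \bar{B}$, forced by the parity of $2i+1$, that produces the Kneser edges. As sanity checks, $i=1$ gives $K(3,1) = K_3$ sitting inside $PC(2) = K_4$, and $i=2$ gives the Petersen graph $K(5,2)$ as an induced subgraph of the Clebsch graph $PC(4)$.
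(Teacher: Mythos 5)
Your proof is correct, and in fact the paper gives no proof at all for this proposition (it is explicitly left as an exercise), so your argument fills that gap with what is clearly the intended reasoning: represent each vertex by its unique $i$-element side, and use the identity $A \oplus \bar{B} = \overline{A \oplus B}$ together with the parity of $|A \oplus B|$ to show the adjacency rule collapses to disjointness of $i$-subsets. Your emphasis on the second clause of the adjacency rule (the comparison with $\bar{B}$, coming from the unordered pair $\{A,\bar{A}\}$) is exactly the right point to highlight, since without it the size-$i$ vertices would wrongly appear independent; the sanity checks $K(3,1)=K_3 \subset K_4$ and $K(5,2) \subset PC(4)$ match the examples the paper itself mentions.
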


We have observed from the Cayley definition that $SPC(k)$ is vertex transitive. From this poset definition, we gather that the diameter of the graph is $\lfloor \frac{k}{2}\rfloor$. Furthermore, it follows from the previous proposition that in $PC(2i)$ the set of vertices at distance $i$ from a given vertex induces an isomorphic copy of $K(2i+1,i)$. As particular examples we observe that the Petersen graph, isomorphic to $K(5,2)$, is a subgraph of $PC(4)$, and that $K(7,3)$, and in particular the Coxeter graph as a well known subgraph of $K(7,3)$, are subgraphs of $PC(6)$.

\subsection{Inductive definition}\label{sec:InductiveDefinition}

Recall that, as one of the equivalent definitions, the hypercube of dimension $n+1$ is built from two disjoint copies of $H_n$ by adding a perfect matching between corresponding vertices. This has lead to the widely studied notion of the \emph{Cartesian product} of graphs, thus claiming that $H_{n+1}=H_n \Box H_1$.
 
An attempt to define $PC(k)$ inductively is one of the first places that shows the importance of defining signed projective cubes rather than just the projective cubes. It is this attempt that had lead us to the definition of \emph{extended double cover} presented in \cite{NSZ21}. Given a signed graph $(G, \sigma)$, its Extended Double Cover, denoted $EDC(G, \sigma)$, is the signed graph built from $(G, \sigma)$ as follows: for each vertex $x$ of $G$ we have two vertices $x_0$ and $x_1$ connected by a negative edge. For each positive edge $xy$ of $(G, \sigma)$ we have two positive edges: $x_0y_0$ and $x_1y_1$. For each negative edge $xy$ of $(G, \sigma)$ we have two positive edges: $x_0y_1$ and $x_1y_0$. In a geometrical view, a positive edge of $(G, \sigma)$ is viewed as a strip in $EDC(G, \sigma)$ whereas a negative edge is viewed as a twisted strip. To switch at a vertex $x$ of $(G, \sigma)$ is the same as exchange the roles or sides of $x_0$ and $x_1$.   

Having observed that $PC(1)$ is simply the digon, we have the following equivalent definition of the projective cubes.

\begin{definition}\label{def:SPC as EDC}
The signed projective cube of dimension 1, $SPC(1)$, is the digon. For $k\geq 2$, the signed projective cube of dimension $k$ is defined as $SPC(k)=EDC(SPC(k-1))$.
\end{definition}
 
This definition, combined with the above mentioned geometric view, gives a new insight to some of the most well known graphs such as $K_4$, $K_{4,4}$ and the Clebsch graph. These are presented in Figures~\ref{fig:SPC(2)}, \ref{fig:SPC(3)}, \ref{fig:SPC(4)}.

\begin{figure}[ht]
\begin{center}
\resizebox{14cm}{5.5cm}{\input 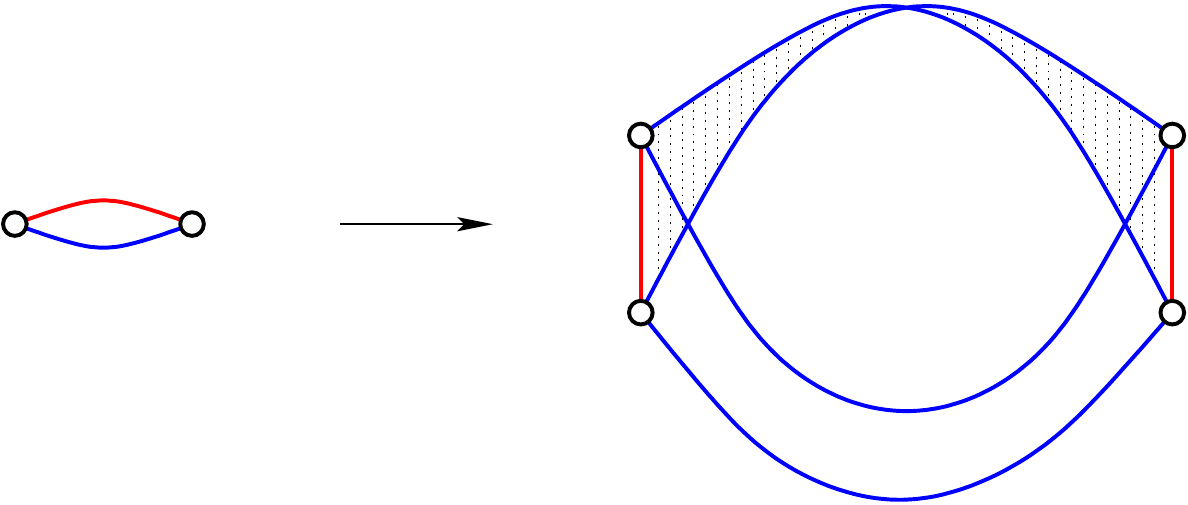_t }\end{center}
\caption{$SPC(2)=EDC(SPC(1))$}\label{fig:SPC(2)}
\end{figure}

\begin{figure}[ht]
\begin{center}
\resizebox{14cm}{6.5cm}{\input 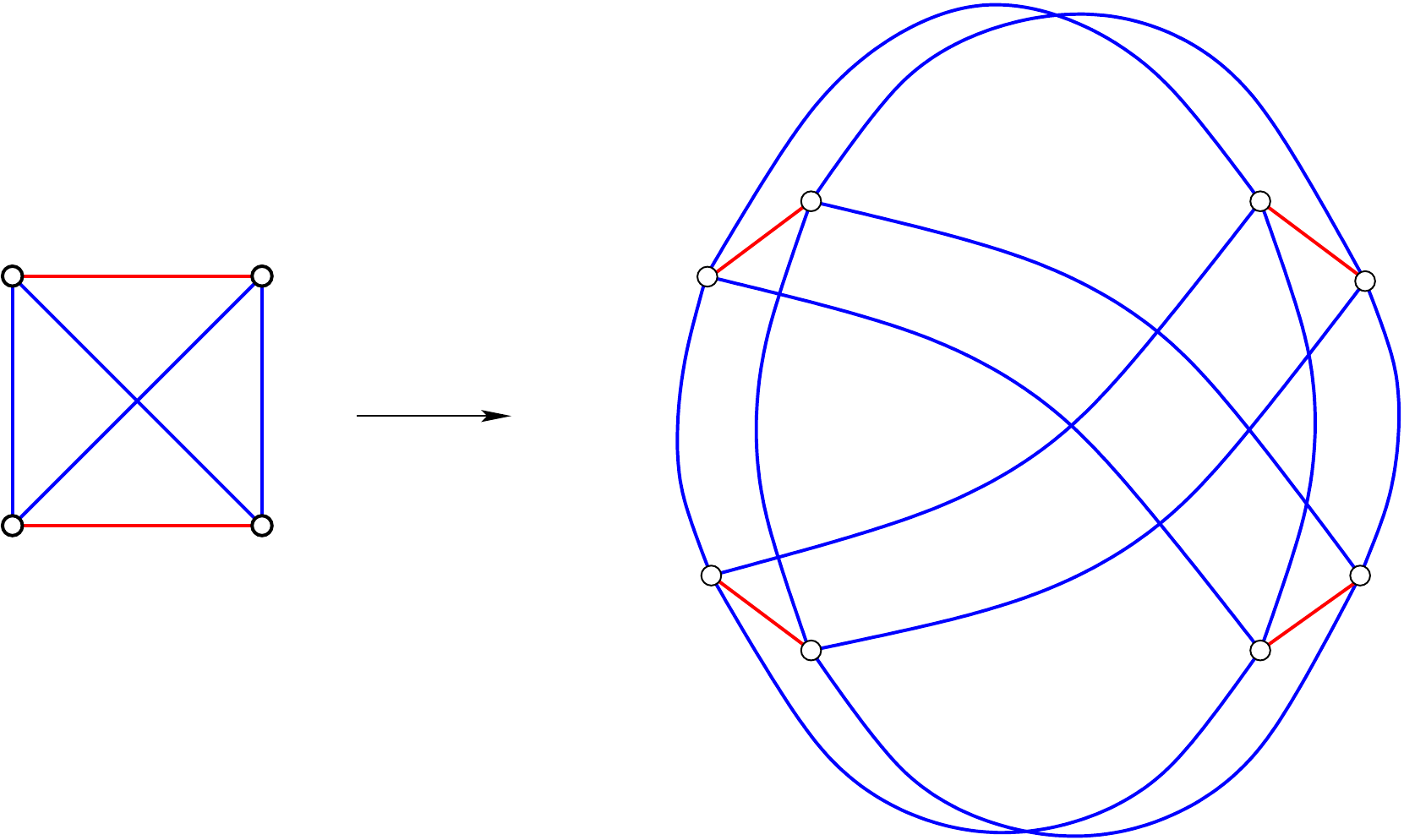_t }\end{center}
\caption{Stripes and twists in $K_{4,4}$}\label{fig:SPC(3)}
\end{figure}

\begin{figure}[ht]
\begin{center}
\scalebox{0.4}{\input 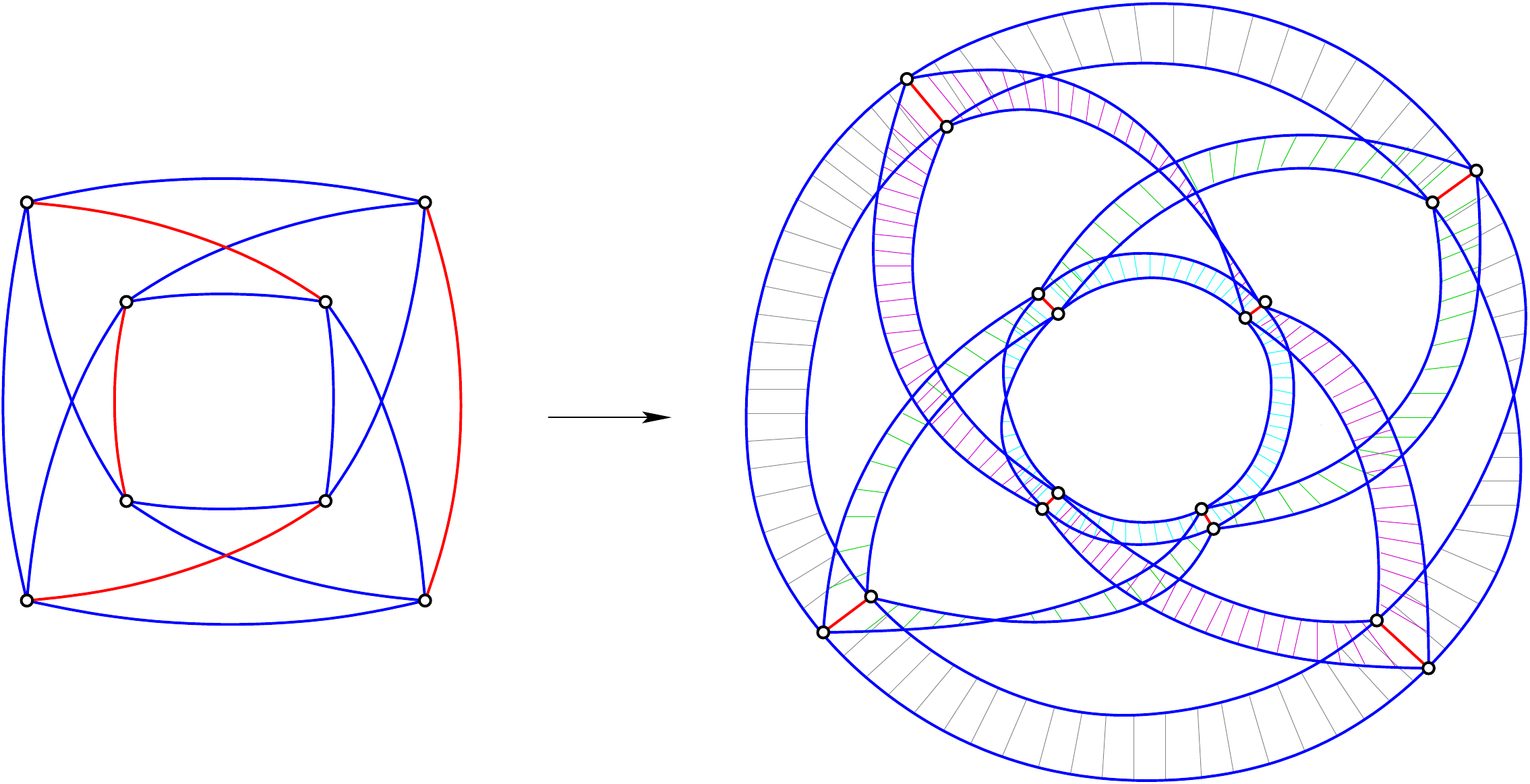_t }\end{center}
\caption{Clebsch graph viewed as combinations of M\"obius ladders}\label{fig:SPC(4)}
\end{figure}

%
%
%
%

Let us now prove that $SPC(k)$ as defined in Definition~\ref{def:SPC as EDC} is isomorphic to $(\mathbb{Z}_{2}^k, \{e_1,e_2, \ldots, e_k\}, \{ J \})$. The case $k=1$ is immediate by the choice of $SPC(1)$. We proceed by induction on $k$. Assume $SPC(k-1)$ is isomorphic to $(\mathbb{Z}_{2}^{k-1}, \{e_1,e_2, \ldots, e_{k-1}\}, \{ J \})$, where $J$ is all 1 vector in $\mathbb{Z}_{2}^{k-1}$. With a vertex $x$ of $SPC(k-1)$ viewed as a vector in $\mathbb{Z}_{2}^{k-1}$, we define $x_0$ and $x_1$ to be the vectors in $\mathbb{Z}_{2}^{k}$ obtained from $x$ by adding the $k^{\text{th}}$-coordinate which is valued $0$ in $x_0$ and $1$ in $x_1$. If $x-y=e_i$ in $\mathbb{Z}_{2}^{k-1}$ for some $i$, $1\leq i \leq k-1$, then we have $x_0-y_0=e_i$ and $x_1-y_1=e_i$ in $\mathbb{Z}_{2}^{k}$ and these vertices are connected by positive edges in $EDC(SPC(k-1))$. If $x-y=J$ in $\mathbb{Z}_{2}^{k-1}$, then $x_0-y_1=x_1-y_0=J$ in $\mathbb{Z}_{2}^{k}$. Thus in $EDC(SPC(k-1))$ we have positive edges $x_0y_1$ and $x_1y_0$. Furthermore, with this notation, vertices $x_0$ and $x_1$ whose difference is $e_k$ are connected by a negative edge. So far thus we have shown that $EDC(SPC(k-1))$ is isomorphic to the signed Cayley graph  $(\mathbb{Z}_{2}^k, \{e_1,e_2, \ldots, e_{k-1}, J\}, \{ e_{k} \})$. However, as mentioned before, we can swap $J$ with $e_k$ to get a canonical presentation of the signed graph.

A basic property of extended double cover operation is the following. For a proof we refer to \cite{FNX23}.

\begin{proposition}\label{porp:g_ijEDC}
Given a signed graph $(G, \sigma)$ we have the followings:
\begin{itemize}
\item $g_{01}(EDC(G,\sigma))=g_{01}(G,\sigma)$,
\item $g_{10}(EDC(G,\sigma))=g_{11}(G,\sigma)+1$,
\item $g_{11}(EDC(G,\sigma))=g_{10}(G,\sigma)+1.$
\end{itemize}
\end{proposition}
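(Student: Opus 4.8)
The plan is to analyze closed walks in $EDC(G,\sigma)$ through the natural projection $\pi$ that forgets the subscript, i.e.\ $\pi(x_0)=\pi(x_1)=x$, combined with a parity bookkeeping of the levels $0$ and $1$. The crucial structural observation is that the only negative edges of $EDC(G,\sigma)$ are the ``vertical'' edges $x_0x_1$; every edge coming from an edge of $G$ (whether positive or negative in $(G,\sigma)$) is positive in the cover. Given a closed walk $W$ in $EDC(G,\sigma)$, I would classify its edges into three kinds: let $p$ count the vertical edges, $q$ the edges lying above a negative edge of $(G,\sigma)$, and $r$ those lying above a positive edge. Then the length of $W$ is $p+q+r$, its number of negative edges is exactly $p$, and deleting the vertical edges (which act as stalls) yields a closed walk $\pi(W)$ in $(G,\sigma)$ of length $q+r$ with exactly $q$ negative edges.

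First I would record the key identity. Assigning to each vertex $x_i$ the level $i\in\mathbb{Z}_2$, a positive-image edge preserves the level while a negative-image edge and a vertical edge each flip it; since $W$ returns to its start, the total number of flips is even, i.e.\ $p+q\equiv 0 \pmod 2$. Consequently, if $W$ has type $(i,j)$ (the parities of its negative-edge count and of its length) and $\pi(W)$ has type $(i',j')$, then $i=i'$ (both equal $p\equiv q \bmod 2$) and $j\equiv j'+i \pmod 2$, because $p+q+r \equiv (q+r)+p$ modulo $2$.

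Second, projection yields the lower bounds. Let $W$ realize $g_{ij}(EDC(G,\sigma))$ for one of the three relevant types. Its projection $\pi(W)$ is a closed walk of type $(i,i+j)$ (indices mod $2$) and length $q+r\le |W|$; for each of the three types this projected walk is nontrivial (it has odd length, or an odd number of negative edges, hence at least one edge), so its length is at least $g_{i,\,i+j}(G,\sigma)$. When moreover $i=1$ the identity $p\equiv q$ forces $p$ odd, so $W$ uses at least one vertical edge and $|W|\ge (q+r)+1$. This gives $g_{01}(EDC(G,\sigma))\ge g_{01}(G,\sigma)$, $g_{10}(EDC(G,\sigma))\ge g_{11}(G,\sigma)+1$, and $g_{11}(EDC(G,\sigma))\ge g_{10}(G,\sigma)+1$.

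Finally, a lifting argument gives the matching upper bounds: starting a shortest walk of the appropriate type in $(G,\sigma)$ at level $0$ and flipping the level exactly on the negative edges produces a walk in the cover whose endpoint level equals the parity of the number of negative edges traversed; if that parity is even the lift is already closed with the same length, while if it is odd one extra vertical edge at the base vertex closes it at the cost of $+1$ in length. Matching the types exactly as in the identity above then yields $g_{01}(EDC(G,\sigma))\le g_{01}(G,\sigma)$, $g_{10}(EDC(G,\sigma))\le g_{11}(G,\sigma)+1$, and $g_{11}(EDC(G,\sigma))\le g_{10}(G,\sigma)+1$, completing the three equalities. I expect the main obstacle to be precisely this $+1$ bookkeeping --- certifying that a mismatch in the parity of the number of negative edges forces exactly one additional vertical edge on the way up (and permits deleting at least one on the way down) --- together with checking that the projected walks for the three types are genuinely nontrivial, so that the inequalities $\ge g_{i,\,i+j}(G,\sigma)$ are legitimate.
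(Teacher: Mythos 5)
Your proposal is correct. Note that the paper itself does not prove this proposition --- it defers to the reference [FNX23] --- so there is no in-paper argument to compare against; your projection-plus-lifting proof is the natural self-contained one. Both directions are genuinely established: the level-parity identity $p\equiv q \pmod 2$ correctly yields that a type-$(i,j)$ closed walk in $EDC(G,\sigma)$ projects to a type-$(i,\,i+j)$ closed walk in $(G,\sigma)$ of length $q+r$, the non-triviality of the projection is checked in all three cases (for $i=1$ via $q$ odd, for type $(0,1)$ via odd length), the extra vertical edge forced by $p\equiv q\equiv 1$ accounts for the $+1$ in the two negative cases, and the lifts close up with exactly zero or one vertical edge as needed for the upper bounds. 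The argument also silently handles the $\infty$ conventions, since each inequality is proved by exhibiting a walk of the corresponding type on the other side. The only cosmetic caveat is the clash between your projection map $\pi$ and the paper's use of $\pi$ for signatures; rename it if this text is to sit alongside the paper's notation.
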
 

\begin{corollary}\label{cor:EDC bip-antiBalance}
If $(G, \sigma)$ is equivalent to $(G, -)$, then $EDC(G, \sigma)$ is a signed bipartite graph whose negative girth is $g_{-}(G, \sigma)+1$. Conversely, if $(G,\sigma)$ is a signed bipartite graph, then there is switching on $EDC(G,\sigma)$ after which all edges are negative and moreover the negative girth of $EDC(G, \sigma)$ which is equal to the odd girth of the underlying graph is $g_{-}(G, \sigma)+1$.
\end{corollary}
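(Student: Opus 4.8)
The plan is to verify the two directions of Corollary~\ref{cor:EDC bip-antiBalance} by combining Proposition~\ref{porp:g_ijEDC} with the elementary relationship between the $g_{ij}$ parameters and the notions of balance, bipartiteness, and negative girth. The key observation throughout is that the four parameters $g_{00}, g_{01}, g_{10}, g_{11}$ record the shortest closed walk of each (sign-parity, length-parity) type, and that structural properties such as ``bipartite'' or ``switches to all-negative'' translate directly into certain $g_{ij}$ being $\infty$.

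For the forward direction, I would first recall that $(G,\sigma)$ being switching equivalent to $(G,-)$ means every closed walk has sign equal to the parity of its length; concretely, every even closed walk is positive (balanced) and every odd closed walk is negative. In the language of the parameters this says $g_{10}(G,\sigma)=\infty$ (there is no negative even closed walk) and $g_{01}(G,\sigma)=\infty$ (there is no positive odd closed walk). Feeding these into Proposition~\ref{porp:g_ijEDC}, I get $g_{11}(EDC(G,\sigma))=g_{10}(G,\sigma)+1=\infty$ and $g_{01}(EDC(G,\sigma))=g_{01}(G,\sigma)=\infty$. Since $g_{01}=g_{11}=\infty$ means $EDC(G,\sigma)$ has no odd closed walk at all, its underlying graph is bipartite, so $EDC(G,\sigma)$ is a signed bipartite graph. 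Its negative girth is then $g_{-}=\min\{g_{10}, g_{11}\}=g_{10}(EDC(G,\sigma))=g_{11}(G,\sigma)+1$. Finally, using again that $(G,\sigma)\equiv(G,-)$ forces $g_{10}(G,\sigma)=\infty$, the negative girth of $(G,\sigma)$ is $g_{-}(G,\sigma)=\min\{g_{10},g_{11}\}=g_{11}(G,\sigma)$, so the negative girth of $EDC(G,\sigma)$ equals $g_{-}(G,\sigma)+1$, as claimed.

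For the converse, I would start from $(G,\sigma)$ being signed bipartite, meaning its underlying graph is bipartite, equivalently $g_{01}(G,\sigma)=g_{11}(G,\sigma)=\infty$ (no odd closed walk of either sign). Applying Proposition~\ref{porp:g_ijEDC} gives $g_{01}(EDC(G,\sigma))=g_{01}(G,\sigma)=\infty$ and $g_{10}(EDC(G,\sigma))=g_{11}(G,\sigma)+1=\infty$. Thus in $EDC(G,\sigma)$ there is no positive odd closed walk and no negative even closed walk; equivalently every closed walk has sign opposite to the parity of its length, which is exactly the condition that $EDC(G,\sigma)$ switches to the all-negative signature $(EDC(G),-)$. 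Under that switching every negative cycle is an odd cycle of the underlying graph, so the negative girth of $EDC(G,\sigma)$ coincides with the odd girth of its underlying graph, which is $g_{11}(EDC(G,\sigma))=g_{10}(G,\sigma)+1$. Since $g_{11}(G,\sigma)=\infty$ in the bipartite case, $g_{-}(G,\sigma)=\min\{g_{10},g_{11}\}=g_{10}(G,\sigma)$, giving the desired equality $g_{-}(EDC(G,\sigma))=g_{-}(G,\sigma)+1$.

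The main obstacle, and the step I would take most care with, is cleanly justifying the dictionary between the abstract condition ``$(G,\sigma)$ is switching equivalent to $(G,-)$'' and the pair of statements $g_{10}=g_{01}=\infty$, and likewise between ``signed bipartite'' and $g_{01}=g_{11}=\infty$; these are the hinges on which both directions turn. I would phrase this as the simple fact that a signature switches to all-negative precisely when the sign of every closed walk is determined by (and opposite in the sense above to) the parity of its length, which is a switching invariant because switching preserves both the parity of the length and the sign of every closed walk. Once this translation is made explicit, the remaining work is purely substitution into Proposition~\ref{porp:g_ijEDC} together with the identity $g_{-}=\min\{g_{10},g_{11}\}$ recorded in the introduction, so no genuinely hard calculation remains.
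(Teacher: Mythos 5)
Your proof is correct and takes essentially the same route the paper intends: the paper states this corollary without any written proof, as an immediate consequence of Proposition~\ref{porp:g_ijEDC}, and your argument is precisely that derivation---translating ``equivalent to $(G,-)$'' and ``signed bipartite'' into the vanishing (being $\infty$) of the appropriate $g_{ij}$ parameters, substituting into the proposition, and using $g_{-}=\min\{g_{10},g_{11}\}$. The one step you treat somewhat informally---that a signature whose cycles all have sign $(-1)^{\text{length}}$ is switching equivalent to the all-negative signature (Zaslavsky's characterization of switching equivalence via cycle signs)---is a standard fact that the paper equally takes for granted.
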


\subsection{As a product of signed graphs}

Extending the previous definition, we arrive at a new definition of a product for signed graphs which uniformly extends the Cartesian product and the categorical product of graphs (the latter is also known under several other names such as Tensor product, direct product and Hedetniemi product).

Recall, that in the inductive definition of the hypercube we have $H_{n+1}=H_n\Box H_1$. As a generalization of this, one observes that if $n=k+l$, then  $H_{n}=H_k\Box H_l$. To have an analogue extension to this, we define a new product of two signed graphs as follows:

\begin{definition}
Given two signed graphs $(G, \sigma)$ and $(H, \pi)$ we define their common product, denoted $(G, \sigma) \circ (H, \pi)$, to be a graph whose vertex set is $V(G)\times V(H)$ with the following signed edges. If $xy\in E^+(G,\sigma)$ and $uv \in E^+(H,\pi)$, then $(xu,yu), (xv,yv), (xu, xv)$, and $(yu, yv)$ are all positive edges of $(G, \sigma) \circ (H, \pi)$.   If $xy\in E^-(G,\sigma)$ and $uv \in E^-(H,\pi)$, then $(xu,yv)$, and $(xv,yu)$ are all negative edges of $(G, \sigma) \circ (H, \pi)$.   
\end{definition}

In other words, in the common product of two signed graphs the positive edges are formed from Cartesian product of the two subgraphs induced by positive edges. And the negative edges are formed by the categorical product of the two subgraphs induced by negative edges. Thus, in particular, the Cartesian product of two graphs $G$ and $H$, is the underlying graph of $(G, +) \circ (H, +)$ and the categorical product of two graphs $G$ and $H$, is the underlying graph of $(G, -) \circ (H, -)$. 

While the Cartesian product is the natural extension of viewing $H_n$ as product of two smaller cubes, the following definition of the signed projective cubes plays important role in initiating the definition of common product.

\begin{definition}
The signed projective cube of dimension $1$ is the digon. For $n=k+l$, $k,l\geq 1$, the signed projective cube of dimension $n$ is defined as $SPC(n)=SPC(k)\circ SPC(l)$.
\end{definition}

In particular, the signed projective cube of dimension $n$ can be defined as the common product $SPC(k-1)\circ SPC(1)$. 

To verify that this definition results in the same signed graphs as the previous ones, we consider the definition as augmented cube. As the subgraphs induced by positive edges in $SPC(k)$ and $SPC(l)$ are, respectively, isomorphic to $H_k$ and $H_l$, in the common product the set of positive edges induces the Cartesian product $H_k\Box H_l$ which is isomorphic to $H_n$. In this view of $H_n$, the antipodal of a vertex $v$ of $H_n$ is obtained by taking the antipodal in coordinates corresponding to $H_k$ and the antipodal in coordinates corresponding to $H_l$. The set of negative edges in the common product of $SPC(k)$ and $SPC(l)$ then connects a vertex of $H_n$ to its antipodal.

{\bf Remark} While many concepts in signed graphs are invariant under switching operation, the common product defined here is strongly based on the choice of the signature. A product after a switching on one of the components would normally result in a signed graph where even the underlying graph is substantially changed.

\subsection{Reverse construction}

Finally, we present a construction of $SPC(k)$ from $SPC(k+1)$. This has the advantage of unifying the two notions of minors and homomorphisms and it would help to develop or prove theories on signed projective cubes by induction on the dimension. To achieve this goal, however, we will add a positive loop to each vertex of signed projective cube. Regarding the Cayley definition one may assume that the difference set for the positive edges contains $0$ as well, we use $SPC^{\circ}(k)$ to denote the version with added positive loops. In other words  $SPC^{\circ}(k)=(\mathbb{Z}_{2}^k, \{0, e_1,e_2, \ldots, e_k\}, \{ J \})$. 

\begin{proposition}
	Given a switching equivalent form of the signed projective cube  $(\mathbb{Z}_{2}^{k+1}, S^+, S^-)$ and an element $s\in S^+$, if we identify each pair of vertices connected by a (positive) edge corresponding to $s$, we get a switching equivalent copy of $SPC^{\circ}(k)$.
\end{proposition}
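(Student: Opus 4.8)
The plan is to realise the vertex identification as a group quotient and then recognise the resulting signed Cayley graph through the characterisation of switching equivalent copies of $SPC(k)$ established above. Since $s\in S^+$ is nonzero, the subgroup $H=\langle s\rangle=\{0,s\}$ has order two, and identifying the two endpoints of every positive $s$-edge is exactly passing to the quotient group $\mathbb{Z}_{2}^{k+1}/H\cong \mathbb{Z}_{2}^{k}$. Let $\rho\colon \mathbb{Z}_{2}^{k+1}\to \mathbb{Z}_{2}^{k+1}/H$ be the quotient homomorphism. An edge of difference $t$ joining $y$ and $y+t$ descends to an edge of the same sign joining $\rho(y)$ and $\rho(y)+\rho(t)$, so the identified graph is precisely the signed Cayley graph $(\mathbb{Z}_{2}^{k}, \rho(S^+), \rho(S^-))$. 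In particular $\rho(s)=0$, so every vertex acquires a single positive loop, which is exactly the feature distinguishing $SPC^{\circ}(k)$ from $SPC(k)$. Hence it remains to show that $(\mathbb{Z}_{2}^{k}, \rho(S^+)\setminus\{0\}, \rho(S^-))$ is switching equivalent to $SPC(k)$.

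For this I would verify the three conditions of the characterisation recalled earlier. Writing $T=(S^+\cup S^-)\setminus\{s\}$, the set $T\cup\{s\}$ is the family of $k+2$ generators of the given copy $(\mathbb{Z}_{2}^{k+1}, S^+, S^-)$ of $SPC(k+1)$, whose only linear dependency is $\sum_{S^+}t=\sum_{S^-}t$, i.e. $\sum_{S^+\cup S^-}t=0$. Since $\rho$ is surjective, $\rho(T)$ generates $\mathbb{Z}_{2}^{k}$, giving the generation condition; and as $s\in S^+$ is disjoint from $S^-$ (the ambient $SPC(k+1)$ being digon-free for $k+1\geq 2$), the order of $\rho(S^-)$ equals $|S^-|$, which is odd. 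For the dependency condition, a relation among the images corresponds to a subset $A\subseteq T$ with $\sum_{t\in A}t\in H=\{0,s\}$. A sum equal to $0$ would be a dependency avoiding $s$, impossible since the unique dependency uses all generators; a sum equal to $s$ forces $A\cup\{s\}$ to be the unique dependency, whence $A=T$. Thus the only nontrivial relation among the images is the descended full relation $\sum_{\rho(S^+)\setminus\{0\}}=\sum_{\rho(S^-)}$, and it is unique.

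The point requiring care, and the main obstacle, is that the characterisation is phrased for genuine sets, so I must check that $\rho$ is injective on $T$; otherwise $\rho(S^+)\setminus\{0\}$ and $\rho(S^-)$ could collide or shrink. Two elements $t_1,t_2\in T$ satisfy $\rho(t_1)=\rho(t_2)$ exactly when $t_1+t_2=s$, that is, when $\{t_1,t_2,s\}$ is a dependency; by uniqueness this can occur only if $\{t_1,t_2,s\}=S^+\cup S^-$, forcing $k=1$. Hence for $k\geq 2$ the map $\rho$ is injective on $T$ and the argument above applies verbatim, so $(\mathbb{Z}_{2}^{k}, \rho(S^+)\setminus\{0\}, \rho(S^-))$ is switching equivalent to $SPC(k)$.

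Finally I would dispose of the degenerate case $k=1$ directly: the quotient of $SPC(2)$ collapses the two elements of $T$ onto a single element carrying one positive and one negative edge, which together with the positive loop coming from $s$ is the digon with a positive loop, namely $SPC^{\circ}(1)$. Since switching at any cut never changes the sign of a loop, the added positive loop at each vertex is preserved when we bring $(\mathbb{Z}_{2}^{k}, \rho(S^+)\setminus\{0\}, \rho(S^-))$ into its standard form. Consequently the identified graph is a switching equivalent copy of $SPC^{\circ}(k)$, as claimed.
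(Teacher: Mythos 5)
The paper itself states this proposition without proof---it is followed only by remarks on how to iterate the operation---so there is no authorial argument to compare yours against; your write-up fills a gap rather than diverges from the paper. Your proof is correct, and it is the natural one given the surrounding text: you realize the identification as the group quotient by $H=\{0,s\}$, and then verify, for the quotient difference sets, the three conditions that the paper proves sufficient for a signed binary Cayley graph to be switching equivalent to $SPC(k)$ (unique full linear dependency, generation, odd order of the negative difference set). The one point where the argument could genuinely break---injectivity of $\rho$ on the remaining generators---is exactly the point you isolate, and your observation that a collision $\rho(t_1)=\rho(t_2)$ forces the dependency $\{t_1,t_2,s\}$, hence $k=1$, together with your direct treatment of $SPC(2)$ collapsing onto $SPC^{\circ}(1)$, closes it.

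Two implicit steps deserve to be made explicit. First, contracting the $s$-edges creates, for every remaining generator $t$ and every adjacent pair of cosets, two parallel edges of the same sign (the $t$-edge at $y$ and the $t$-edge at $y+s$ descend to the same coset pair); identifying the contracted graph with the signed Cayley graph $(\mathbb{Z}_{2}^{k},\rho(S^+),\rho(S^-))$ therefore uses the paper's convention that parallel edges of a common sign are merged, which is the same simplification the paper mentions in the remark following the proposition. Second, you take for granted that the given switching equivalent form satisfies the unique-dependency, generation, and odd-$|S^-|$ conditions; the paper proves only that these conditions are \emph{sufficient} for switching equivalence to the signed projective cube, not that every Cayley form switching equivalent to $SPC(k+1)$ satisfies them. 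This matches the paper's evident intent (its switching equivalent forms are produced by the pair-swapping operation, which preserves all three conditions), but if the hypothesis is read as an arbitrary signed Cayley graph switching-isomorphic to $SPC(k+1)$, a short extra argument is needed: the underlying graph being $PC(k+1)$ forces $|S^+\cup S^-|=k+2$, generation, and the unique full dependency, and the sign of the $(k+2)$-cycles realizing that dependency forces $|S^-|$ to be odd.
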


One may view this identification both as a minor operation of signed graphs where some positive edges are contracted, or a homomorphism operation where we allow positive loops. This is of special interest in the study of balanced-chromatic number \cite{JMNNQ23} and its refinement, the circular chromatic number (see Section~\ref{sec:CircularColoring}). 

We should note, to repeat this operation to build $SPC^{\circ}(k-i)$ from $SPC^{\circ}(k)$ by induction on $i$, one should either delete the positive loops first or after the building simplify the resulting signed graph by deleting multi-edges of the same sign.

\section{A historical note}

Special cases of projective cubes are among most well known graphs. The first case $PC(1)$ is the digon, perhaps lesser known because its underlying graphs is not simple, but it is a basic graph in the study of 2-edge-colored or signed graphs. The next case, $SPC(2)$ or rather $PC(2)$ is $K_4$. It is the homomorphism problem to this graph that captures the four-color theorem and as we will see motivates the homomorphism study of $PC(2k)$ and $SPC(k)$ in general. The underlying graph of $SPC(3)$, $PC(3)$ is isomorphic to $K_{4,4}$.  

The graph $PC(4)$ is commonly known as the Clebsch graph and in some sense is one of the first graphs to be studied. The associated name Clebsch perhaps is because of a misunderstanding as we explain next.
An algebraic surface is the zero set of a polynomial $f$ over a (closed) field. The surface associated to a degree 3 polynomial such as $x^3+y^3+z^3+t^3$ is called a cubic surface. Such a surface can be embedded in the complex projective space of dimension 3. Alfred Clebsch proved in 1866 that every smooth cubic surface on an algebraically closed field is rational, the proof starts with a straight line on the surface.

The study of the straight lines on cubic surfaces however goes back to earlier work of Arthur Cayley and George Salmon who showed in 1849 that every smooth cubic surface over an algebraically closed field contains exactly 27 lines (see Section 9 \cite{D12} for more on the geometry of cubic surfaces). The intersection graph of this structure of 27 lines is the complement of what is known as Schläfli graph and sometimes is referred to as such.

This is a highly symmetric 10-regular graph. If one removes a vertex and all its neighbours, that is to remove a line and all that intersects it from the original line configuration, then what we are left with is a 5-regular graph on 16 vertices which is isomorphic to $PC(4)$. This is not the end of the story though. This 16 vertices graph can be directly defined as the intersection of straight lines of another surface, namely Segre quartic surface. This surface is named after Corrado Segre because of his studies in 1884. Segre quartic surface is an intersection of two quadrics in 4 dimensional projective space: These can be described as $$x^2+y^2+z^2+t^2+v^2=0$$ and $$a_1 x^2+a_2 y^2+a_3 z^2+a_4 t^2+a_5 v^2=0$$ where the $a_i$'s are all distinct complex coefficients and nonzero.

For further details we refer to \cite{S1884} and also \cite{S1951}. It contains 16 (straight) lines whose intersection graph is isomorphic to $PC(4)$.


One may also note that cubic surfaces, from which the 27 vertices graph is built, are the Del Pezzo surfaces of degree 3. The Del Pezzo surfaces of degree 4 are exactly Segre quartic surfaces whose line structure gives the graph $PC(4)$. If we similarly remove a vertex and all its neighbours, (that is a line and all those that intersects it in a Segre quartic surface) what remains is isomorphic to one of the most well known graphs, namely the Petersen graph. This graph in turn is isomorphic to the intersection graph of the 10 lines in a Del Pezzo surfaces of degree 5. Finally, removing a vertex and its neighbors from the Petersen graph we have the 6-cycle which is isomorphic to the intersection graph of the six lines of the Del Pezzo surfaces of degree 6.

Recall that Del Pezzo surfaces are all obtained by blowing-up $r$ points in the complex projective plane. In this way one gets a Del Pezzo surface of degree $9-r$ and by studying lines and conics through the $r$ points one can find the number of lines contained in the Del Pezzo surface. In this construction one can easily see the graph properties described before (see the book \cite{D12} and \cite{B78} for more details and properties of Del Pezzo surfaces).


%
%

The projective cube of dimension 4 has surprise appearance on other subject besides algebraic surfaces.
It is one of few known triangle-free strongly regular graphs. But, perhaps more surprisingly, it plays a role in determining the Ramsey number $R(3,3,3)$. To be precise, it is known that $R(3,3,3)=17$, in one direction this identity means that there exists a 3-edge-coloring of $K_{16}$ such that every color class induces a triangle-free graph. Greenwood and Gleason were first to give, in \cite{GG55}, such a 3-edge-coloring of $K_{16}$. It turned out that there are only two such colorings, and each color class in any of these two colorings induces a graph isomorphic to $PC(4)$. The original coloring of Greenwood and Gleason gives yet another definition of $PC(4)$: Vertices are the elements of the field $GF[16]$ (built on $GF[2]$ by an irreducible polynomial of degree four, e.g., $x^4-x-1$). Two vertices are adjacent if their difference is a cubic residue (i.e., $x^3$, $x^3+x^2$, $x^3+x$, $x^3+x^2+x+1$, and $1$). For this reason, the graph is referred to as the Greenwood-Gleason graph in \cite{BM76}.

Some homomorphism properties of this graph will be addressed later in this work.

For the general class of projective cubes, the first appearance seems to be in \cite{M76} where, without naming the graphs and using the notation $\Box_{k}$ for what we have defined as $PC(k)$, Meredith proved that $PC(2k)$ and $H_n$ are the only triple-transitive triangle-free graphs with no $K_{2,3}$ as a subgraph. Following this line of work, $PC(k)$ was named ``folded cube'' in \cite{BCN89}. We prefer the name ``projective cube'' because first of all, the operation using which $PC(k)$ is built from $H_{k+1}$ is the same as projection which is used to build $d$-dimensional projective space from the $(d+1)$-dimensional sphere, and, secondly, the term ``folding'' in homomorphism of graphs is reserved for a special type of homomorphism where one identifies vertices at distance 2. We note that while identifying vertices at distance two resembles a folding, identifying all pairs of vertices at maximum distance, specially with no common center, can hardly be imagined as a folding.

The property of $PC(2k)$ being triple-transitive does not work for $PC(2k+1)$ when they are viewed as graphs, but when viewed as signed graphs and with proper definition, every $SPC(k)$ is proved to be triple-transitive. This will be fully addressed in a forthcoming work.

Noting that $PC(2k-1)$ is bipartite, the chromatic number of $PC(2k)$ was first proved to be 4 in \cite{S87}. Since $PC(2k)$ contains the Kneser graph $K(2k+1, k)$ as an induced subgraph, and that this special family of Kneser graphs are referred to as ``odd graphs", in this reference the term ``extended odd graphs" is used to denote the family $PC(2k)$. For more on the coloring and chromatic properties of signed projective cubes see next sections.  

\section{As subgraph of binary Cayley graphs}\label{sec:Cayley}

A Cayley graph based a binary group is called a \emph{cube-like} graph by L. Lovasz \cite{H75}. This naming is based on the fact that hypercubes are a basic family of such Cayley graphs.
Following the interest in this family of graphs and building on the result of M. Sokolova \cite{S87}, C. Payan, in \cite{P92}, proved a surprising result that no binary Cayley graph has chromatic number 3, i.e. given a binary Cayley graph either it is bipartite or its chromatic number is at least 4. The main step in Payan's proof of this fact is to find a small 4-chromatic subgraph in any cube-like graph of odd-girth $2k+1$. The subgraph he found, turned out to be the well-known generalized Mycielski graph on odd cycles. For a definition of these graphs and presentation of their bipartite analogue we refer to \cite{GNRT23}. We note here that as one of the best examples of graphs of high odd-girth and high chromatic number, generalized Mycielski graphs were independently discovered by other authors.

A strengthening of Payan's result is given in \cite{BNT15} where it is shown that:

\begin{theorem}\label{thm:PC(2k)subgraph}
Any cube-like graph $H$ of odd-girth $2k+1$ contains $PC(2k)$ as an induced subgraph.
\end{theorem}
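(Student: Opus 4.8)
The plan is to use the definition of a cube-like graph as a Cayley graph $H=(\mathbb{Z}_2^n,S)$, where two vertices are adjacent exactly when their difference lies in the connection set $S$ (note that $-S=S$ is automatic over $\mathbb{Z}_2$, and $0\notin S$ since odd-girth $2k+1\ge 3$ forbids loops). Since $H$ has odd-girth $2k+1$, I would fix a shortest odd cycle $C$ and read off the sequence $s_1,s_2,\dots,s_{2k+1}\in S$ of generators traversed along it; because $C$ closes up, $s_1+s_2+\cdots+s_{2k+1}=0$. The target graph $PC(2k)=(\mathbb{Z}_2^{2k},\{e_1,\dots,e_{2k},J\})$ has precisely this shape, namely $2k$ generators together with their sum $J$. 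So the strategy is to show that $\Gamma=\langle s_1,\dots,s_{2k+1}\rangle$ is a $2k$-dimensional subspace on which these generators play the roles of $e_1,\dots,e_{2k},J$, and that the subgraph of $H$ induced on $\Gamma$ carries no further edges.

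First I would exploit minimality of $C$ to pin down the linear algebra of the $s_i$. The claim is that the only subsets $I\subseteq\{1,\dots,2k+1\}$ with $\sum_{i\in I}s_i=0$ are $\emptyset$ and the whole set. Indeed, a proper nonempty such $I$ would, together with its complement $I^{c}$ (which also sums to $0$, as the full sum is $0$), provide a sub-collection of generators of \emph{odd} cardinality summing to zero: since $2k+1$ is odd, exactly one of $I,I^{c}$ is odd, and being proper it has size at most $2k-1$. Concatenating those generators in any order yields a closed walk of odd length $<2k+1$ in $H$, which must contain an odd cycle shorter than $C$, contradicting the odd-girth. This single fact does most of the work: it forces the $s_i$ to be distinct, makes $\{s_1,\dots,s_{2k}\}$ linearly independent, gives $s_{2k+1}=s_1+\cdots+s_{2k}$, and hence $\dim\Gamma=2k$. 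Identifying $\Gamma\cong\mathbb{Z}_2^{2k}$ via $s_i\mapsto e_i$ then sends $s_{2k+1}\mapsto J$, so $(\Gamma,\{s_1,\dots,s_{2k+1}\})$ is exactly $PC(2k)$.

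The remaining, and I expect hardest, step is to show that the subgraph \emph{induced} by $H$ on $\Gamma$ is precisely this Cayley graph, i.e.\ that $S\cap\Gamma=\{s_1,\dots,s_{2k+1}\}$ with no stray generators; this is exactly what upgrades ``subgraph'' to ``induced subgraph''. Suppose some $t\in S\cap\Gamma$ is not among the $s_i$. By the previous step $t$ has exactly two representations $t=\sum_{i\in T}s_i$ and $t=\sum_{i\in T^{c}}s_i$, and since $t\neq 0$ and $t\neq s_j$ for all $j$, both $T$ and $T^{c}$ have size strictly between $1$ and $2k$ (size $0$ or $2k+1$ would give $t=0$, and size $1$ or $2k$ would give $t=s_j$). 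As $2k+1$ is odd, one of them, say of size $m$, is even, with $2\le m\le 2k-2$. Then $t$ together with the $m$ generators in that subset forms a closed walk of odd length $1+m\le 2k-1<2k+1$, which again contains an odd cycle shorter than $C$, a contradiction. Hence no stray $t$ exists, $S\cap\Gamma=\{s_1,\dots,s_{2k+1}\}$, and the induced subgraph is $PC(2k)$.

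The main obstacle is this last step, precisely because it is what distinguishes an induced subgraph from a mere subgraph: one must verify that every element of $S$ lying in the span $\Gamma$ is already one of the generators of the shortest odd cycle, and the delicate point is the bookkeeping that guarantees an even-sized representative of the right size (ruling out the degenerate cases $|T|\in\{1,2k\}$ where $t$ would coincide with some $s_j$). The only tool used throughout is elementary, namely that an odd closed walk always contains an odd cycle, but all the leverage comes from the minimality of $C$, which I invoke three times: to force distinctness of the $s_i$, their linear independence, and the absence of stray generators.
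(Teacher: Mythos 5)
Your proof is correct and takes essentially the same approach as the paper, which cites \cite{BNT15} for this exact statement but proves its signed generalization in Section~\ref{sec:Cayley} by the very same argument. In both, minimality of the shortest odd (respectively negative) cycle shows that the vanishing of the total sum is the only linear relation among the traversed generators, complementary subsets account for the $2^{2k}$ distinct vertices, and any extra adjacency within the generated subgroup would create an odd (negative) closed walk shorter than the girth --- exactly your ``stray generator'' step.
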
 
   
The conclusion of Payan on the chromatic number of cube-like graphs then is followed from earlier result of Sokolova \cite{S87}. However, Payan's proof find much smaller 4-chromatic subgraphs when the cube-like graph is not bipartite.

Here we generalize this theorem to the larger class of signed binary Cayley graphs. We note that since $SPC(2k)$ is switching equivalent to $(PC(2k), -)$, that is to say one can apply a switching on $SPC(2k)$ to have only negative edges, the homomorphism properties among signed graphs $SPC(2k)$ is the same as the homomorphism properties among the graphs $PC(2k)$. Thus our focus will be on signed binary Cayley graphs where the underlying graph is bipartite, but the proof technique applies generally. In the following section, we generalize the coloring result using the notion of circular chromatic number.

\begin{theorem}
 If a signed bipartite binary Cayley graph $\hat{G}=Cay(\Gamma, S^+, S^-)$ has unbalanced girth $2k$, then it contains the signed projective cube
$SPC(2k-1)$ as an induced subgraph.
 \end{theorem}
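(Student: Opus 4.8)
The plan is to follow the template of Theorem~\ref{thm:PC(2k)subgraph} but to replace ``shortest odd cycle'' by ``shortest negative cycle'' throughout, building the copy of $SPC(2k-1)$ on the subgroup generated by a single such cycle and using its minimality to control everything. First I would fix a shortest negative cycle $C$ of $\hat G$; by hypothesis it has length $2k$, and I record the sequence $s_1,\dots,s_{2k}$ of elements of $S^+\cup S^-$ read along its edges, so that $\sum_i s_i=0$ in $\Gamma$ and an odd number of the $s_i$ lie in $S^-$ (since $C$ is negative). Because $\hat G$ is bipartite there is a linear functional $\phi:\Gamma\to\mathbb Z_2$ with $\phi(s)=1$ for every $s\in S^+\cup S^-$; this is the bookkeeping device that lets me read the parity of any subset-sum of generators as the parity of the corresponding walk length.

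The heart of the argument is a minimality analysis of $C$. I would first show that the only vanishing subset-sum among $s_1,\dots,s_{2k}$ is the full one. Indeed, if a proper nonempty $T\subseteq\{1,\dots,2k\}$ had $\sum_{i\in T}s_i=0$, then both $T$ and its complement give closed walks, and since the total number of negative $s_i$ is odd, exactly one of these two walks is negative; that walk has length $<2k$ and therefore contains a negative cycle shorter than $C$, contradicting that the unbalanced girth is $2k$. In particular the $s_i$ are pairwise distinct and span a subgroup $H\cong\mathbb Z_2^{\,2k-1}$ in which $\sum_i s_i=0$ is the unique linear dependency. The induced signed subgraph of $\hat G$ on the vertex set $H$ is exactly the signed Cayley graph $(H,\,S^+\cap H,\,S^-\cap H)$, since adjacency inside a subgroup uses precisely the generators lying in that subgroup.

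The main obstacle is the possibility that some generator $t\in(S^+\cup S^-)\cap H$ is not one of the $s_i$: such a $t$ would contribute an extra edge and spoil the isomorphism with $SPC(2k-1)$. To rule this out, write $t=\sum_{i\in I}s_i$; applying $\phi$ forces $|I|$ to be odd, and since $t$ is not a single generator we have $|I|\ne 1$. Using $\sum_i s_i=0$ we also have $t=\sum_{i\notin I}s_i$, so we may assume $3\le |I|\le k$. Now form the two closed walks obtained by closing the edge $t$ against $\{s_i:i\in I\}$ and against $\{s_i:i\notin I\}$, of lengths $1+|I|$ and $1+(2k-|I|)$. Their signs differ (the number of negative edges of $C$ being odd), so exactly one of them is negative; the first has length at most $k+1<2k$ and the second at most $2k-2<2k$, so in either case $\hat G$ would contain a negative cycle shorter than $2k$, which is impossible. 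Hence $(S^+\cup S^-)\cap H=\{s_1,\dots,s_{2k}\}$. I expect this step to be the delicate one: the key is that bipartiteness forces $|I|$ odd and hence $|I|\ge 3$, which is exactly what makes the \emph{complementary} walk short enough to close out the case where the direct walk happens to be positive.

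It then remains only to identify $(H,\,S^+\cap H,\,S^-\cap H)$. Its generating set is $\{s_1,\dots,s_{2k}\}$, these generate $H\cong\mathbb Z_2^{\,2k-1}$, their only dependency is $\sum_i s_i=0$, and the negative generators form a set of odd size; these are precisely the hypotheses of the characterization of signed projective cubes among signed binary Cayley graphs discussed after Definition~\ref{def:AsCayleyGraphs}, so this induced subgraph is switching-equivalent to $SPC(2k-1)$, as required. The degenerate case $k=1$ (unbalanced girth $2$) is immediate, since a shortest negative cycle is then a digon, which is exactly $SPC(1)$.
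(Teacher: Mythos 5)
Your proposal is correct and follows essentially the same route as the paper's proof: fix a shortest negative cycle through $0$, show the full sum is the only vanishing subset-sum of its generators, take the subgroup they span, rule out any extra adjacency by pairing it with the two complementary closed walks (exactly one of which is negative and of length $<2k$), and identify the result as a switching-equivalent copy of $SPC(2k-1)$. The only cosmetic differences are that you finish via the Cayley-graph characterization rather than the paper's poset labeling, and your bipartiteness functional $\phi$ is superfluous, since $|I|\geq 2$ and $|I^{\complement}|\geq 2$ already make both closed walks shorter than $2k$.
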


\begin{proof}
Let $UC_{2k}$ be an unbalanced cycle of $Cay(\Gamma, S^+, S^-)$ of length $2k$. By Observation \ref{vertex-transitive}, without loss of generality, we assume that $0\in V(UC_{2k})$. Suppose the edges of $UC_{2k}$ correspond to $e_1,\ldots,e_l,e_{l+1},\ldots,e_{2k}$, where $e_1,\ldots,e_l\in S^{+}$, $e_{l+1},\ldots,e_{2k}\in S^{-}$,
$l$ is odd. Let $S=\{e_1, \ldots,e_{2k}\}$. By the definition of a Cayley graph, we have $e_1+e_2+\cdots+e_l+e_{l+1}+\cdots+e_{2k}=0$. But since $2k$ is the negative girth of $G$, this is the only relation among the elements of $S$. In other words, given subsets $A$ and $B$ of $S$, we have  $\displaystyle \sum_{e_i \in A} e_i= \sum_{e_j \in B} e_j$ if and only if either $A=B$ or $A=B^{\complement}$. Because otherwise we have nonempty subset $X$ of $S$ (the symmetric difference of $A$ and $B$) whose sum of elements is $0$. But then sum of the elements of $X^{\complement}$ is also $0$ and one of the two sets would represent a negative cycle of length strictly smaller than $2k$. 

For each pair $(A, A^\complement)$ of the subsets of $S$, let $u_A=\displaystyle \sum_{e_i \in A} e_i$ be the vertex corresponding to $A$ in $\hat{G}$. The previous claim implies that distinct pairs $(A, A^\complement)$ results in distinct vertices $u_A$. Let $V'=\{ u_A \mid A \subset S\}$. We have observed that $|V'|=2^{2k-1}$. We claim that the subgraph induced by $V'$ is isomorphic to $SPC(2k-1)$.   

For this, we use the poset definition of $SPC(2k-1)$. If subsets $A$ and $B$ are such that they differ in one element, say $e_i$ of $S$, then $u_A$ and $u_B$ are connected by an edge labeled $e_i$. On the other hand, we claim that if both pairs $A, B$ and $A, B^\complement$ differ in more than one element, then $u_A$ and $u_B$ are not adjacent in $\hat{G}$. If so, then $u_A-u_B=e'$, but then of the two nontrivial relations  $\displaystyle \sum_{e_i \in A-B} e_i=e'$ and $\displaystyle \sum_{e_i \in A-B^\complement} e_i=e'$ one would correspond to a negative cycle of length strictly smaller than $2k$, contradicting the choice of $2k$ as the negative girth of $\hat{G}$.  \end{proof}

\section{Homomorphisms and circular coloring of signed projective cubes}\label{sec:CircularColoring}

The basic homomorphism relation between signed projective cubes is quite similar to that of homomorphism between negative cycles. More precisely:

\begin{proposition}\label{prop:HomOfSPC}
	For any positive integer $k$, we have $SPC(k+2)\to SPC(k)$. 
\end{proposition}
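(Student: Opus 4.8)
The plan is to work with the algebraic description, Definition~\ref{def:AsCayleyGraphs}, and to exhibit the homomorphism as an explicit group homomorphism $\varphi \colon \mathbb{Z}_2^{k+2} \to \mathbb{Z}_2^k$. The guiding observation is that a group homomorphism of the underlying binary groups automatically induces a homomorphism of the associated signed Cayley graphs as soon as it maps the positive difference set into the positive difference set, the negative difference set into the negative difference set, and no generator to $0$ (the last condition ruling out the collapse of an edge to a loop). Indeed, since $\varphi(x)-\varphi(y)=\varphi(x-y)$, both the existence and the sign of the image edge are controlled entirely by the image of the generator $x-y$. Thus the task reduces to assigning images to the standard basis $e_1,\dots,e_{k+2}$ of $\mathbb{Z}_2^{k+2}$.

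I would send every positive generator $e_i$ to a positive generator of $SPC(k)$, i.e. to an element of $\{e_1,\dots,e_k\}$ (in particular nonzero), and send the unique negative generator $J_{k+2}=e_1+\cdots+e_{k+2}$ to the unique negative generator $J_k=e_1+\cdots+e_k$. Concretely, set $\varphi(e_i)=e_i$ for $1\le i\le k$ and $\varphi(e_{k+1})=\varphi(e_{k+2})=e_1$. Then each basis vector has nonzero image in the target positive difference set, while $\varphi(J_{k+2})=(e_1+\cdots+e_k)+e_1+e_1=e_1+\cdots+e_k=J_k$, so the negative generator is carried to the negative generator. For $k\ge 2$ the element $J_k$ differs from every $e_j$, so no positive edge is ever identified with the negative edge; the boundary case $k=1$, where $SPC(1)$ is the digon, is checked directly. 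This $\varphi$ is the desired homomorphism $SPC(k+2)\to SPC(k)$.

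Conceptually this mirrors the homomorphism $C_{-(k+3)}\to C_{-(k+1)}$ of negative cycles alluded to just before the statement. Through the power-graph description (Definition~\ref{def:PowerGraphofC_n}), $SPC(k+2)$ and $SPC(k)$ are the connected components of $pow(C_{-(k+3)})$ and $pow(C_{-(k+1)})$, and any signed homomorphism $f\colon (G,\sigma)\to(H,\pi)$ induces one $pow(G,\sigma)\to pow(H,\pi)$ via the parity pushforward $A\mapsto\{\,h:|f^{-1}(h)\cap A|\ \text{odd}\,\}$, a group homomorphism of $2^{V(G)}$ into $2^{V(H)}$ that preserves the order parity of subsets and carries each edge-generator to an edge-generator of the same sign. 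One then only needs to fold the longer negative cycle onto the shorter one: traversing $C_{-(k+1)}$ once and doubling back along a positive edge yields a negative closed walk of length $k+3$, hence $C_{-(k+3)}\to C_{-(k+1)}$; passing to power graphs and restricting to one component recovers $SPC(k+2)\to SPC(k)$.

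The one genuinely delicate point — and the place where the hypothesis that the dimensions differ by exactly $2$ is used — is the parity cancellation in the second paragraph: I have $k+2$ positive generators whose images must sum to $J_k$ while each image stays nonzero. Using $e_1,\dots,e_k$ once each already produces $J_k$, so the two surplus generators must be paired so their images cancel, which is possible precisely because the surplus is even. Were the surplus odd, as in an attempted $SPC(k+1)\to SPC(k)$, no nonzero assignment could sum to $J_k$; this is consistent with the no-homomorphism lemma (Lemma~\ref{lem:No-Hom}), since $SPC(k+1)$ and $SPC(k)$ have unbalanced girths of opposite parity and hence are blocked at the relevant $g_{ij}$. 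The final verification I would carry out is simply to confirm that $\varphi$ respects the sign of every generator, so that it is a homomorphism of signed graphs and not merely of their underlying graphs; the construction above guarantees exactly this.
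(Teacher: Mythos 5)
Your proof is correct, and it rests on the same basic idea as the paper's — an explicit coordinate-collapsing map read off from the Cayley description $SPC(n)=(\mathbb{Z}_2^n,\{e_1,\dots,e_n\},\{J\})$ — but the map you choose is genuinely different, in a way worth noting. The paper first switches $SPC(k+2)$ so that the edges labelled $e_1,e_2,J$ become negative, then deletes the first two coordinates, adding $J$ to the image when the deleted pair has odd parity; unravelled, this is the linear map $e_1,e_2\mapsto J_k$, $e_i\mapsto e_{i-2}$ for $i\geq 3$, which sends two of the original positive generators onto the \emph{negative} generator of the target, and that is exactly why the preliminary switching is indispensable there. Your map $e_i\mapsto e_i$ ($i\leq k$), $e_{k+1},e_{k+2}\mapsto e_1$ instead collapses the two surplus generators onto a \emph{positive} generator, so $J_{k+2}\mapsto J_k$ comes for free and every edge sign is preserved on the nose: no switching is needed, and the whole verification reduces to your (correct) observation that a group homomorphism carrying $S^+$ into $S^+$, $S^-$ into $S^-$, and no generator to $0$ induces a sign-preserving homomorphism of signed Cayley graphs. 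That small abstraction is a real gain in economy and reusability, and your parity remark rightly explains why the same scheme cannot give $SPC(k+1)\to SPC(k)$, consistent with Lemma~\ref{lem:No-Hom}; what the paper's version buys instead is a working illustration of its recurring theme that $J$ is not a distinguished generator, since the proof exercises precisely the switch-symmetry that lets $e_1,e_2,J$ trade places. Your alternative route via power graphs of negative cycles is also essentially sound, but it leans on an unproved (though true) functoriality claim for $pow$ under edge-sign-preserving homomorphisms, so the direct Cayley argument is the one to keep as the proof.
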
 

\begin{proof}
We use the definition of $SPC(k)=(\mathbb{Z}_2^{k}, \{e_1, \ldots, e_k\}, \{J\})$. We will basically project $SPC(k+2)$ onto its last $k$ coordinates. To this end, we first apply a switch in $SPC(k+2)$ so that edges corresponding to $e_1, e_2$ and $J$ are the negative ones. The projection $\varphi: V(SPC(k+2)) \to V(SPC(k))$ is defined as follows. Given a vertex $u$ of $SPC(k+2)$, if the first two coordinates of $u$ induce $00$ or $11$, then $\varphi$ maps $u$ to the element $u'$ of $V(SPC(k))$ by deleting the first two coordinates. If the first two coordinates of $u$ induce $01$ or $10$, then $\varphi$ maps $u$ to the element $u'+J$ of $V(SPC(k))$.

Let $u$ and $v$ be two adjacent vertices of $SPC(k+2)$. If they differ in $e_1$ or $e_2$, then one of them, say $u$ is mapped to $u'$ and the other, say $v$ is mapped to $v'+J$. Thus $\varphi(u)$ and $\varphi(v)$ are adjacent with a negative edge in $SPC(k)$. If $u-v$ is an element of  $\{e_3, \ldots, e_k\}\cup \{J\}$, then their projection on the first two coordinates is identical, thus the same rule in defining $\varphi$ is applied and hence their images are connected by the same type of edge.	
\end{proof}

By associativity of homomorphism, we conclude that $SPC(k+2i)\to SPC(k)$ for any positive integer $i$. These are then the only type of homomorphism relation in this class of graphs. 

For a smoother inductive approach in dealing with homomorphism problems related to this class of graphs, we may consider $SPC^{\circ}$. By identifying all pairs of adjacent vertices with an edge corresponding to $e_{k+1}$ we have the following.

\begin{proposition}\label{prop:HomOfSPC-o}
	For any positive integer $k$, we have $SPC^{\circ}(k+1)\to SPC^{\circ}(k)$. 
\end{proposition}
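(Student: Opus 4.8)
The plan is to exhibit the homomorphism explicitly as the projection onto the first $k$ coordinates, which is exactly the map realizing the identification of $e_{k+1}$-edges announced in the sentence preceding the statement. Using the Cayley description $SPC^{\circ}(k+1)=(\mathbb{Z}_2^{k+1}, \{0, e_1, \ldots, e_{k+1}\}, \{J'\})$, where $J'$ is the all-$1$ vector of length $k+1$, I would define $\varphi\colon \mathbb{Z}_2^{k+1} \to \mathbb{Z}_2^k$ by deleting the last coordinate, i.e. $\varphi(u_1, \ldots, u_k, u_{k+1}) = (u_1, \ldots, u_k)$. This is precisely the quotient map modulo $\langle e_{k+1}\rangle$, so two vertices are identified exactly when they differ by $e_{k+1}$, as required. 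In fact this is a direct instance of the Reverse-construction proposition applied to the canonical form with $s=e_{k+1}$; I would nonetheless spell out the map to keep the argument self-contained.

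The verification then proceeds edge type by edge type, reading off the differences. For a positive edge with $u-v=e_i$, $1\le i\le k$ (or $u-v=0$, a loop), the image difference is again $e_i$ (resp. $0$), so the edge maps to a positive edge (resp. positive loop) of $SPC^{\circ}(k)$. For a positive edge with $u-v=e_{k+1}$, the image difference is $0$, so the edge collapses onto a positive loop; this step is legitimate precisely because $0$ lies in the positive difference set of $SPC^{\circ}(k)$, which is the entire reason for passing to the looped version $SPC^{\circ}$ rather than $SPC$. Finally, for a negative edge with $u-v=J'$, the projection of $J'$ is the all-$1$ vector $J$ of length $k$, which is the negative difference of $SPC^{\circ}(k)$, so negative edges map to negative edges. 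Since every edge keeps its sign (positive edges possibly becoming positive loops, negative edges staying negative), the sign of every closed walk is preserved, and $\varphi$ is a signed-graph homomorphism.

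The one point that genuinely needs care — and where I expect a subtle error could hide — is confirming that the contraction does not create a \emph{negative} loop, which would be forbidden in the target. Such a loop could arise only if some fiber $\varphi^{-1}(u')=\{u',\, u'+e_{k+1}\}$ contained a negative edge; but the two vertices of a fiber differ by $e_{k+1}$, and since $k\ge 1$ we have $e_{k+1}\ne J'$, so they are joined only by the positive $e_{k+1}$-edge. Hence no negative loop is produced and $\varphi$ is well defined into $SPC^{\circ}(k)$. Beyond this check the argument is routine bookkeeping of group differences, and the conclusion $SPC^{\circ}(k+1)\to SPC^{\circ}(k)$ follows at once.
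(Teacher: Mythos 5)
Your proposal is correct and matches the paper's own argument: the paper proves this proposition exactly by identifying the pairs of vertices joined by $e_{k+1}$-edges (an instance of its reverse-construction proposition), which is precisely the coordinate-projection map $\varphi$ you spell out. Your explicit edge-type verification, including the check that no negative loop arises since $e_{k+1}\neq J'$ for $k\geq 1$, is just a self-contained elaboration of that same contraction argument.
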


The circular chromatic number of a signed graph $(G, \sigma)$, denoted $\chi_c(G,\sigma)$, is defined to be the smallest circumference ($r$) of a circle $C$ for which a mapping $\phi : V(G) \to C$ exists satisfying that for each negative edge $uv$ the distance of $\phi(u)$ and $\phi(v)$ is at least $1$ and for each positive edge $xy$ the distance of $\phi(x)$ and $\phi(y)$ is at most $\frac{r}{2}-1$. For basic facts such as existence of $r$ and that it is a rational number at least $2$ when $(G,\sigma)$ has no negative loop we refer to \cite{NWZ21}, but noting the conditions on positive edges and negative edges are reversed. We should note that for signed graph $(G, -)$ where all edges are negative, $\chi_c(G,-)=\chi_c(G)$ and that adding a positive loop to a vertex does not influence the circular chromatic number.

Circular 4-coloring is of special interest because $SPC^{\circ} (1)$ is the circular clique corresponding to circular 4-coloring. In other words, $\chi_c(G, \sigma)\leq 4$ if and only if $(G, \sigma) \to SPC^{\circ}(1)$. 

The transitivity of homomorphisms implies that if $(G,\sigma)\to (H,\pi)$, then $\chi_c(G, \sigma)\leq \chi_c (H,\pi)$. Thus, noting that for every $k$ we have $SPC(k)\to SPC^{\circ}(1)$, we have $\chi_c(SPC(k)) \leq 4$. Here we show that equality always hold. This fact is implied from some other known result, basically we know some minimal subgraphs whose circular chromatic numbers are 4, see \cite{GNRT23} and references therein for more details. The advantage of the proof here is the proof technique itself which is purely combinatorial and simple, while the other proofs lie on topological concepts such as the winding number. For notions of coloring, such as the chromatic number, the fractional chromatic number and the circular chromatic number which can be defined by homomorphism, the transitivity of homomorphism implies the so called no-homomorphism lemma. That for homomorphism based parameter $\phi$ if $G\to H$, then $\phi(G) \leq \phi(H)$ and that often the inequality is strict. In the proof below we do the opposite: using a homomorphism of $SPC^{\circ}(k)$ to $SPC^{\circ}(k-1)$ we show that if $SPC(k)$ admits an $r$-coloring for some $r<4$, we can reverse engineer to build a circular $r$-coloring of $SPC^{\circ}(k-1)$. Repeating this process, we get a contradiction at $SPC^{\circ}(2)$ or $SPC^{\circ}(1)$.

\begin{theorem}\label{thm:XcSPCk}
	For every positive integer $k$, we have $\chi_c(SPC(k))=4$. 
\end{theorem}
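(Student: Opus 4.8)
Since the excerpt already records $SPC(k)\to SPC^{\circ}(1)$, giving $\chi_c(SPC(k))\le 4$, the entire task is the lower bound $\chi_c(SPC(k))\ge 4$, and the plan is to run the homomorphism of Proposition~\ref{prop:HomOfSPC-o} \emph{backwards} at the level of colorings. Suppose for contradiction that $SPC(k)$ admits a circular $r$-coloring $\phi$ with $r<4$, and put $\delta=\tfrac r2-1$, so $0\le\delta<1$; regard the palette as the circle $\mathbb{R}/r\mathbb{Z}$. Adding positive loops does not affect $\chi_c$, so $\phi$ is equally a circular $r$-coloring of $SPC^{\circ}(k)$. The core claim I would prove is a descent step: \emph{for every $j\ge 2$, a circular $r$-coloring of $SPC^{\circ}(j)$ produces one of $SPC^{\circ}(j-1)$.} Iterating from $j=k$ down to $j=2$ lands a circular $r$-coloring on $SPC^{\circ}(1)$, the digon; but there the positive and negative edge joining the two vertices force their colors to be at circular distance both $\ge 1$ and $\le\delta$, hence $\delta\ge 1$ and $r\ge 4$, the desired contradiction.

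To build the descent step, recall that the homomorphism $SPC^{\circ}(j)\to SPC^{\circ}(j-1)$ identifies $w0$ and $w1$, the two preimages of $w\in\mathbb{Z}_2^{j-1}$, which are joined by the positive edge $e_j$ (here $w0,w1\in\mathbb{Z}_2^{j}$ denote $w$ with a $0$ or $1$ appended). Given a circular $r$-coloring $\phi$ of $SPC^{\circ}(j)$, I would define $\psi(w)$ to be the midpoint of the shorter arc joining $\phi(w0)$ and $\phi(w1)$; this is well defined and lies within $\delta/2$ of each, because the positive edge $e_j$ forces $d(\phi(w0),\phi(w1))\le\delta<\tfrac r2$. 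For a positive edge $w\sim w+e_i$ of $SPC^{\circ}(j-1)$, the four colors $\phi(w0),\phi(w1),\phi((w+e_i)0),\phi((w+e_i)1)$ sit on a closed walk of four positive edges (two along $e_j$, two along $e_i$) of total length $\le 4\delta<r$; lifting this short closed walk to $\mathbb{R}$, the two midpoints become ordinary averages of pairs differing by at most $\delta$, so $d(\psi(w),\psi(w+e_i))\le\delta$. The positive loop imposes no condition.

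The crux is the negative edge $w\sim w+J'$ of $SPC^{\circ}(j-1)$, where $J'$ is the all-$1$ vector of $\mathbb{Z}_2^{j-1}$. Setting $a_0=\phi(w0)$, $a_1=\phi(w1)$, $b_0=\phi((w+J')0)$, $b_1=\phi((w+J')1)$, the positive $e_j$-edges give $d(a_0,a_1)\le\delta$ and $d(b_0,b_1)\le\delta$, while the two negative $J$-edges $w0\sim(w+J')1$ and $w1\sim(w+J')0$ give the \emph{crossing} bounds $d(a_0,b_1)\ge 1$ and $d(a_1,b_0)\ge 1$. I must conclude $d\big(\mathrm{mid}(a_0,a_1),\mathrm{mid}(b_0,b_1)\big)\ge 1$, and this is the step I expect to fight with, since the plain triangle inequality only yields the useless $2-\tfrac r2>0$. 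The fix is a short direct computation: placing $\mathrm{mid}(a_0,a_1)$ at $0$ and letting $\mu\in[0,\tfrac r2]$ be the distance to $\mathrm{mid}(b_0,b_1)$, and writing $a_0=-s,\ a_1=s$, $\{b_0,b_1\}=\{\mu-t,\mu+t\}$ with $0\le s,t\le\delta/2$, the two crossing bounds become, in either ordering of $b_0$ and $b_1$, inequalities forcing $\mu\ge 1+|s-t|\ge 1$; the boundary cases in which some difference exceeds $\tfrac r2$ are disposed of using only $r\ge 2$. Hence $d(\psi(w),\psi(w+J'))=\mu\ge 1$, so $\psi$ is a circular $r$-coloring of $SPC^{\circ}(j-1)$, and the descent—and with it the theorem—goes through. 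The two places where $r<4$ is genuinely used are exactly the points to watch: it guarantees $\delta<1$ (so that the digon at the bottom is obstructed) and $4\delta<r$ (so that the four-edge walk lifts and the midpoints behave).
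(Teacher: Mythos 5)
Your proposal is correct and is essentially the paper's own argument: both run the contraction homomorphism $SPC^{\circ}(j)\to SPC^{\circ}(j-1)$ in reverse, transporting a hypothetical circular $r$-coloring with $r<4$ down the tower until it is contradicted at the digon $SPC^{\circ}(1)$. The only divergence is in implementation---you place the induced color at the midpoint of the short arc spanned by the colors of the two identified vertices (so the positive-edge check becomes a lifting/averaging computation and the negative-edge check a coordinate calculation), whereas the paper places it at a designated endpoint of that arc and verifies the constraints by disjointness and covering arguments on arcs---but the decomposition, the descent lemma, and the final contradiction coincide.
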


\begin{proof}
	As mentioned, a homomorphism of $SPC(k)$ to $SPC^{\circ}(1)$ provides a circular 4-coloring of $SPC(k)$. It remains to show that there is no circular $r$-coloring for $r<4$. Consider the Cayley definition of $SPC(k)$ as $(\mathbb{Z}_2^k, S^+ , S^-)$ with a nonempty $S^+$ and let $s_1\in S^+$. Toward a contradiction, assume $\phi$ is a circular $r$-coloring of $SPC(k)$ with $C$ being the circle to which vertices of $SPC(k)$ are mapped to. Observe that by contracting edges corresponding to $s_1$ we obtain a copy of $SPC^{\circ}(k-1)$. Given vertices $x$ and $x'$ of $SPC(k)$, connected by an edge corresponding to $s_1$, let $x_1$ be the vertex in the image $SPC^{\circ}(k-1)$ obtained from contracting the (positive) edge $xx'$. We define a mapping $\varphi$ of $V(SPC^{\circ}(k-1))$ to the circle $C$ as follows. Assume that in the clockwise direction $\phi(x)\phi(x')$ is the shorter side of the circle which then is of length at most $\frac{r}{2}-1$. Then define $\varphi(x_1)=\phi(x)$. 
	
	We claim that $\varphi$ must be a circular $r$-coloring of $SPC^{\circ}(k-1)$. Suppose $x_1y_1$ is an edge of $SPC^{\circ}(k-1)$. That means there is an element says $s_2\neq s_1$ of $S^+, S^-$ such that either we have $x-y=s_2$ or $x-y'=s_2$. By the symmetry of $y$ and $y'$ we assume the latter. We assume, moreover, that in the definition of $\varphi$ we have taken $\varphi(x_1)=\phi(x)$. If $\varphi(y_1)=\phi(y')$, then condition on the edge $x_1y_1$ is inherent by the edge $xy'$  of $SPC(k)$ and nothing left to prove. Thus we assume that $\varphi(y_1)=\phi(y)$. In summary, by symmetries we have assumed that $\phi(x)\phi(x')$ and  $\phi(y)\phi(y')$, in this (clockwise) orders, are the shorter sides of the circle when the circle is partitioned to two parts by their two end points. To complete the proof we consider two cases based on whether $s_2\in S^+$ or $s_2\in S^-$.
	

	\begin{itemize}
		\item $s_2\in S^-$. 
		In this case, we claim that $\phi(x)\phi(x')$ and  $\phi(y)\phi(y')$ have no common point. That is because, otherwise, one end point of one of the two intervals belongs to the other. By symmetries assume $\phi(y)$ is on $\phi(x)\phi(x')$, hence $d(\phi(y),\phi(x'))\leq d(\phi(x),\phi(x'))\leq \frac{r}{2}-1<1$. This contradicts the fact that $d(\phi(y),\phi(x'))\geq 1$.  Thus we may assume $\phi(x),\phi(x'),\phi(y),\phi(y')$ are in this order in clockwise direction (see Figure \ref{fig:both outside}). Then one of the two intervals of the circle with end points $\phi(y)$ and $\phi(x)$ contains $\phi(x')\phi(y)$ and the other contains $\phi(y')\phi(x)$ both of which are of length at least 1 because $\phi$ is a circular coloring. Hence we have $d(\varphi(x_1), \varphi(y_1))\geq 1$, as required.
				
		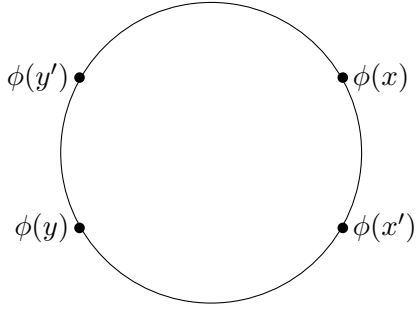
\begin{figure}
			\centering
			
			\begin{tikzpicture}
				\draw (-3,1)ellipse(2 and 2);
				\fill(-1.25,2) circle (2pt);
				\fill(-1.25,0) circle (2pt);
				\fill(-4.75,2) circle (2pt);
				\fill(-4.75,0) circle (2pt);

				\node [right] at (-1.25,2){$\phi(x)$};
			    \node [right] at (-1.25,0){$\phi(x')$};
			    \node [left] at (-4.75,0){$\phi(y)$};
				\node [left] at (-4.75,2){$\phi(y')$};

				

			\end{tikzpicture}
			
			\caption{The only possible ordering of $\phi(x), \phi(x'), \phi(y),\phi(y')$
				\label{fig:both outside}}
			
		\end{figure}
		
	\item  $s_2\in S^+$. 
	First we consider the case when $\phi(x)\phi(x')$ and  $\phi(y)\phi(y')$ have a common point. By symmetries then we may assume that $\phi(y)$ is on $\phi(x)\phi(x')$. But then we have $d(\phi(x),\phi(y))\leq d(\phi(x),\phi(x'))\leq \frac{r}{2}-1$ which is as we wish. 
	
	Next we assume that the two intervals do not intersect. Hence $\phi(x),\phi(x'),\phi(y),\phi(y')$ are in this order in the clockwise direction (Figure~\ref{fig:both outside}). Furthermore, we note that each of the consecutive pair of points (in $\phi(x),\phi(x'),\phi(y),\phi(y')$) correspond to a positive edge of $SPC(k)$ and thus for each of them either the corresponding arc in clockwise direction is of length at most $\frac{r}{2}-1$, or the complement. If the complement for one pair is of smaller length, then any pair of point among the four are at distance at most $\frac{r}{2}-1$ and we are done. So it should be the case that each of the four intervals $\phi(x)\phi(x'), \phi(x')\phi(y), \phi(y)\phi(y')$ and $\phi(y')\phi(x)$ is of length at most $\frac{r}{2}-1$. But since together they cover the full circle, at least one of them is of length at least $\frac{r}{4}$. Hence we have $\frac{r}{4}\leq \frac{r}{2}-1$ which implies $r\geq 4$. This contradicts the assumption that $r<4$. \qedhere
		\end{itemize}
\end{proof}

The proof of Theorem~\ref{thm:XcSPCk} was based on the canonical edge-coloring of $SPC(k)$ where edges are labeled $\{e_1, e_2, \dots, e_k, J\}$. There are two key properties that make the proof work: The first is that, having chosen a label $a \in  \{e_1, \dots, e_k, J\}$ for the set of (positive) edges to be contracted a key property is that if an edge $xy$ labeled $a$ is adjacent to an edge labeled $b$, say $yz$, then two edges are in a 4-cycle, say $xyzt$, where $zt$ is labeled $a$ and $tz$ is labeled $b$.  This ensures that when (all) edges labeled $a$ are contracted and coloring $\phi$ is modified to coloring $\varphi$ as in the proof, the conditions of circular coloring will be enforced on edge connecting vertices formed by contraction. The second property is that after a sequence of contractions where we contract all edges except those labeled $e_k$ and $J$, the resulting signed graph, after removing multiedges of the same sign, is $SPC^{\circ}(1)$ whose circular chromatic number is $4$.

\section{Signed projective cubes and packing signatures}\label{sec:Packing}

Homomorphisms of a signed graph to a signed projective cube captures a notion of packing as stated in the following theorem.

\begin{theorem}\label{thm:SPC(k)-packing}\cite{NRS13}
A signed graph $(G, \sigma)$ maps to $SPC(k)$ if and only if there are signatures $\sigma_1, \sigma_2, \ldots, \sigma_{k+1}$ such that each edge of $G$ is negative in precisely one of $(G, \sigma_i)$'s, $1\leq i \leq k+1$.
\end{theorem}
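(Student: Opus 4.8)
The plan is to exploit the symmetric Cayley presentation of $SPC(k)$ as $(\mathbb{Z}_2^k, S)$ with $S=\{a_1,\dots,a_{k+1}\}=\{e_1,\dots,e_k,J\}$ (say $a_{k+1}=J$), whose only linear dependency is $a_1+\cdots+a_{k+1}=0$. I read the statement with the standard convention that each $\sigma_i$ is switching-equivalent to $\sigma$; this hypothesis is genuinely needed, as an unbalanced triangle (which packs two signatures with one negative edge and two negative edges but does not map to $SPC(1)$) already shows. The main bookkeeping device is the canonical proper edge-colouring of $SPC(k)$ by $S$, together with the signatures $\pi_1,\dots,\pi_{k+1}$ of $SPC(k)$, where $\pi_i$ declares exactly the $a_i$-coloured edges negative; the excerpt has already observed (edge-transitivity up to switching-isomorphism) that all $\pi_i$ are switching-equivalent to the canonical $\pi$. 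For the \textbf{forward direction}, starting from $\varphi\colon(G,\sigma)\to(SPC(k),\pi)$ I choose the switching $\sigma'$ of $\sigma$ for which $\varphi$ preserves edge-signs, so $\sigma'(e)=\pi(\varphi(e))$, and then simply pull back each $\pi_i$ by setting $\sigma_i(e)=\pi_i(\varphi(e))$. Since $\pi_i$ is $\pi$ switched at some vertex set $U_i$, the signature $\sigma_i$ is $\sigma'$ switched at $\varphi^{-1}(U_i)$, hence $\sigma_i\sim\sigma'\sim\sigma$; and $e$ is negative in $\sigma_i$ exactly when $\varphi(e)$ carries colour $a_i$, which holds for one and only one $i$.

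For the \textbf{backward direction}, let $E_i$ be the negative-edge set of $\sigma_i$, so that $E_1,\dots,E_{k+1}$ partition $E(G)$. The key fact I would record first is a parity property: because each $\sigma_i\sim\sigma$, every closed walk $W$ satisfies $\sigma(W)=\sigma_i(W)=(-1)^{|W\cap E_i|}$ for all $i$, so the numbers $|W\cap E_1|,\dots,|W\cap E_{k+1}|$ all share a common parity $\epsilon$. Now define the edge-colouring $\ell\colon E(G)\to S$ by $\ell(e)=a_i$ for the unique $i$ with $e\in E_i$, and seek a vertex map $\varphi\colon V(G)\to\mathbb{Z}_2^k$ with $\varphi(x)+\varphi(y)=\ell(xy)$ for each edge $xy$ (no orientation issue arises, as we work in characteristic $2$). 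Such a $\varphi$ exists on each connected component precisely when $\sum_{e\in C}\ell(e)=0$ around every cycle $C$; and indeed $\sum_{e\in C}\ell(e)=\sum_i(|C\cap E_i|\bmod 2)\,a_i=\epsilon\sum_i a_i=0$, using the common parity and the defining relation $\sum_i a_i=0$. Integrating $\ell$ along a spanning tree of each component then produces $\varphi$.

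It remains to verify that $\varphi$ is a homomorphism of signed graphs. By construction every edge $xy$ satisfies $\varphi(x)+\varphi(y)=\ell(xy)\in S$, so $\varphi$ sends edges of $G$ to edges of $SPC(k)$ of the matching colour. For the signs I would specialize the parity fact to $i=k+1$: for any closed walk $W$ one has $\sigma(W)=(-1)^{|W\cap E_{k+1}|}$, while its image $\varphi(W)$ traverses the colour $a_{k+1}=J$ exactly $|W\cap E_{k+1}|$ times and so has the same $\pi$-sign. Thus $\varphi$ preserves the signs of all closed walks, which is exactly the condition for it to be a signed homomorphism $(G,\sigma)\to(SPC(k),\pi)$.

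The main obstacle is the backward direction — turning the combinatorial packing into an honest vertex map. Two points are delicate: first, the hypothesis $\sigma_i\sim\sigma$ (rather than arbitrary signatures) is exactly what forces the common-parity property, and without it the whole equivalence collapses; second, the single relation $\sum_i a_i=0$ is precisely what makes the cycle-consistency hold, so that $\ell$ integrates to a well-defined $\varphi$. Once these two structural facts are in hand, the checks of adjacency and sign preservation are routine.
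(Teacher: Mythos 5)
The paper itself offers no proof of this theorem: it is quoted from \cite{NRS13}, so there is no internal argument to compare against, and the only question is whether your proof stands on its own. It does. You correctly identified the one place where the statement as printed is loose: the $\sigma_i$ must be required to be switching-equivalent to $\sigma$ (the paper makes this explicit only later, in its definition of the signature packing number and in Theorem~\ref{thm:looped-SPC(k)-packing}), and your unbalanced-triangle example cleanly shows that the equivalence fails without this hypothesis. Your forward direction, pulling back the $k+1$ signatures $\pi_i$ of $SPC(k)$ (each declaring one colour class negative) along the edge-sign-preserving form of the homomorphism, rests on the fact that $\pi_i\sim\pi$; this is legitimately borrowed from the paper, being exactly the switching at the cut $(X_o,X_e)$ described in the Cayley-graph subsection when two generators are repositioned between $S^+$ and $S^-$. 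Your backward direction is the crux and is sound: switching-equivalence forces all the quantities $|C\cap E_i|$ to share a common parity on every cycle $C$, the single relation $e_1+\cdots+e_k+J=0$ then makes the colouring $\ell$ sum to zero around every cycle, hence $\ell$ integrates along a spanning tree to a vertex map $\varphi$ on each component, and signs of closed walks are preserved because both $\sigma(W)$ and $\pi(\varphi(W))$ equal $(-1)^{|W\cap E_{k+1}|}$; this matches the paper's closed-walk definition of homomorphism, so nothing is missing. The only blemish is cosmetic: for $k=1$ the generators $e_1$ and $J$ coincide in $\mathbb{Z}_2$, so ``the only linear dependency'' must be read for the multiset $\{a_1,a_2\}$ and $SPC(1)$ is a digon; your argument still goes through there provided the edge map is chosen by colour rather than determined by the endpoints.
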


Observe that for the condition of this theorem to hold, depending on the parity of $k$, the signed graph $(G, \sigma)$ is either equivalent to $(G, -)$ ($k$ even) or the graph $G$ is bipartite ($k$ odd).
The theorem can be generalized to the class of all signed graphs after some modifications.

Let $SPC^{\circ}(k)$ be the signed graph obtained from $SPC(k)$ by adding a positive loop on each vertex. In other words, $SPC^{\circ}(k)$ is the signed Cayley graph $(\mathbb{Z}_{2}^k, \{0, e_1,e_2, \ldots, e_k\}, \{ J \})$. 

Given a signed graph $(G, \sigma)$, the \emph{signature packing number} of $(G, \sigma)$, denoted $p(G,\sigma)$, is defined to be the maximum number $l$ of signatures $\sigma_1, \sigma_2, \ldots, \sigma_l$ each equivalent to $\sigma$ and with the property that each edge of $G$ is negative in at most one of $(G, \sigma_i)$'s, $1\leq i \leq l$.

It is shown in \cite{NY23} that:
\begin{theorem}\label{thm:looped-SPC(k)-packing}
Given a signed graph $(G, \sigma)$, the signature packing number of it is equal to the largest $k$ such that $(G, \sigma)$ maps to $SPC^{\circ}(k-1)$.
\end{theorem}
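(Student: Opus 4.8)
The plan is to establish, for every positive integer $k$, the equivalence
$$p(G,\sigma)\geq k \quad\Longleftrightarrow\quad (G,\sigma)\to SPC^{\circ}(k-1).$$
From this the theorem follows at once: by Proposition~\ref{prop:HomOfSPC-o} the set of values $k$ for which $(G,\sigma)\to SPC^{\circ}(k-1)$ holds is downward closed, so the largest such $k$ coincides with the largest $k$ satisfying $p(G,\sigma)\geq k$, namely $p(G,\sigma)$ itself. The whole argument runs parallel to the proof of Theorem~\ref{thm:SPC(k)-packing}, the one essential new feature being that the positive loops of $SPC^{\circ}(k-1)$ absorb exactly the gap between the ``precisely one'' of the loopless statement and the ``at most one'' appearing in the definition of $p(G,\sigma)$.

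For the forward direction I would use the canonical edge-labelling of $SPC^{\circ}(k-1)=(\mathbb{Z}_2^{k-1},\{0,e_1,\ldots,e_{k-1}\},\{J\})$ by its difference set $\{0,e_1,\ldots,e_{k-1},J\}$. Fix a switching $\sigma'$ of $\sigma$ for which the given homomorphism $\varphi$ is sign-preserving, so that an edge $uv$ is negative in $\sigma'$ exactly when $\varphi(u)-\varphi(v)=J$. For each coordinate $j\in\{1,\ldots,k-1\}$ let $\pi_j$ be obtained by switching $\sigma'$ at $\{v:\varphi_j(v)=1\}$; this switching flips $uv$ precisely when the $j$-th coordinate of $\varphi(u)-\varphi(v)$ is $1$, and a short check shows that the negative edges of $\pi_j$ are exactly those labelled $e_j$, while the negative edges of $\sigma'$ are exactly those labelled $J$. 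The $k$ signatures $\sigma',\pi_1,\ldots,\pi_{k-1}$ are all equivalent to $\sigma$, and since each edge carries a single label, each edge is negative in at most one of them; crucially, an edge mapped to a loop (label $0$) is negative in none, which is exactly why ``at most one'' rather than ``precisely one'' is the correct condition. Hence $p(G,\sigma)\geq k$.

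For the converse I would reverse this construction. Suppose $\sigma_1,\ldots,\sigma_k$ are each equivalent to $\sigma$ with every edge negative in at most one of them. I would treat $\sigma_1$ as a base and, for $j\in\{1,\ldots,k-1\}$, let $X_j$ be a switching set carrying $\sigma_1$ to $\sigma_{j+1}$ (such a set exists since $\sigma_{j+1}\sim\sigma_1$). Define $\varphi\colon V(G)\to\mathbb{Z}_2^{k-1}$ by letting the $j$-th coordinate of $\varphi(v)$ be $1$ exactly when $v\in X_j$. For an edge $uv$, the $j$-th coordinate of $\varphi(u)-\varphi(v)$ equals $1$ iff the sign of $uv$ disagrees between $\sigma_1$ and $\sigma_{j+1}$, so a case analysis on which single signature (if any) makes $uv$ negative shows that $\varphi(u)-\varphi(v)$ is $0$ (when $uv$ is negative in none), some $e_j$ (when it is negative only in $\sigma_{j+1}$), or $J$ (when it is negative only in $\sigma_1$), with the sign of $uv$ in $\sigma_1$ matching the type of the target edge. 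Thus $\varphi$ is a homomorphism $(G,\sigma_1)\to SPC^{\circ}(k-1)$, and switching invariance of homomorphisms yields $(G,\sigma)\to SPC^{\circ}(k-1)$. I expect the main obstacle to be precisely this bookkeeping in the converse: confirming that the ``negative in none'' case is carried by the positive loop $0$ and the ``negative in the base'' case by $J$, all signs remaining consistent. Since the loopless Theorem~\ref{thm:SPC(k)-packing} never has to account for the loop label, this is the genuinely new part of the verification.
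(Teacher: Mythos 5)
Your proof is correct. Note that the paper itself gives no proof of this statement: it is quoted from \cite{NY23}, so there is no in-paper argument to compare against. Your route --- reading off a packing from the canonical edge-labelling of $SPC^{\circ}(k-1)$ by switching at the coordinate sets $\{v:\varphi_j(v)=1\}$, and conversely encoding the switching sets that carry $\sigma_1$ to $\sigma_2,\ldots,\sigma_k$ as the coordinates of a map to $\mathbb{Z}_2^{k-1}$ --- is the standard switching-sets-as-coordinates correspondence underlying Theorem~\ref{thm:SPC(k)-packing}, and your case analysis correctly identifies the one genuinely new point: edges negative in no signature land on the positive loops (label $0$), which is exactly what replaces ``precisely one'' by ``at most one''. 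Both directions check out, including the reduction of the theorem to the equivalence $p(G,\sigma)\geq k \Leftrightarrow (G,\sigma)\to SPC^{\circ}(k-1)$ via the downward closure given by Proposition~\ref{prop:HomOfSPC-o}; the only cosmetic caveat is that for $k=2$ (where $J=e_1$ in $\mathbb{Z}_2^{1}$) an edge's ``label'' should be taken to be the target edge itself rather than the group element, since the difference $e_1$ carries both a positive and a negative edge of $SPC^{\circ}(1)$ --- your argument goes through verbatim under that reading.
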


This theorem generalizes the previous one because when $(G, \sigma)$ is in one of the two mentioned classes and it maps to $SPC^{\circ}(k-1)$, then it also maps to $SPC(k-1)$.

\section{As homomorphism bounds}\label{sec:Hom-Bound}

Based on the fact that $PC(2)$ is isomorphic to $K_4$, and noting that any proper 4-coloring of a graph $G$ is equivalent to a homomorphism to $K_4$, the four-color theorem is viewed as a special case of homomorphism from planar graphs to $PC(2k)$ in \cite{N07}. With $PC(2k+1)$ being a bipartite graph, thus homomorphically being equivalent to $K_2$, $PC(2k+1)$ is of little to no interest in the homomorphism study of graphs. This deficiency led B. Guenin to define homomorphism of signed graphs using which a bipartite analogue of the key Conjecture of \cite{N07} was introduced. The combined conjecture, using the terminologies we have developed here, is as follows.

\begin{conjecture}\label{conj:PlanarToSPC(k)}
Given a signed planar graph $(G, \sigma)$, if $g_{ij}(G,\sigma) \geq g_{ij}(SPC(k))$, then $(G, \sigma) \to SPC(k)$. 
\end{conjecture}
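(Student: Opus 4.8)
The plan is to first split according to the parity of $k$, since the hypothesis $g_{ij}(G,\sigma)\geq g_{ij}(SPC(k))$ has a very different meaning in the two cases. When $k$ is even, one checks that $g_{01}(SPC(k))=g_{10}(SPC(k))=\infty$, so the hypothesis forces $(G,\sigma)$ to be switching equivalent to $(G,-)$; since $SPC(k)$ is itself switching equivalent to $(PC(k),-)$, the assertion reduces to the purely graph-theoretic statement that every planar graph of odd-girth at least $k+1$ admits a homomorphism to $PC(k)$. When $k$ is odd the hypothesis forces $G$ to be bipartite with unbalanced girth at least $k+1$, and the target $SPC(k)$ is genuinely signed. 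I would attempt both families simultaneously, using the fact that $SPC(k)=EDC(SPC(k-1))$ (Definition~\ref{def:SPC as EDC}) together with Corollary~\ref{cor:EDC bip-antiBalance} to move between the balanced and the bipartite worlds, organizing the whole argument as an induction on $k$.

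Second, I would trade the homomorphism target for its combinatorial reformulation. By Theorem~\ref{thm:SPC(k)-packing}, producing a map $(G,\sigma)\to SPC(k)$ is the same as packing $k+1$ signatures on $G$ so that every edge is negative in exactly one of them. This recasts the conjecture as an edge-labeling problem on a plane graph, in which the girth hypotheses become parity constraints along short cycles, and one would try to construct the required packing on a minimal counterexample by a discharging argument: bound the structure of a hypothetical minimal $(G,\sigma)$, identify an unavoidable reducible configuration, and extend a packing from a smaller instance across it. For the small odd dimensions this connects to the known bounded targets (for instance the $12$-vertex graph of the abstract that receives all planar graphs of odd-girth at least $7$), which one would use as a guide for choosing the reducible configurations.

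The decisive obstacle, and the reason the statement is posed as a conjecture rather than proved, is that the base case $k=2$ is precisely the Four Color Theorem: there the girth hypothesis is vacuous, $SPC(2)=PC(2)=K_4$, and the claim is that every planar graph is $4$-colorable. No self-contained elementary argument is therefore possible, and any genuine proof must either invoke the 4CT for the base and propagate it upward, or reprove it. Propagation is itself unresolved: the homomorphisms $SPC(k+2)\to SPC(k)$ of Proposition~\ref{prop:HomOfSPC} only reproduce the necessary direction supplied by Lemma~\ref{lem:No-Hom} and do not relate the conjecture at distinct dimensions, while the extended-double-cover induction does not preserve planarity in any directly exploitable way. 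Thus the real content is structural --- a still-missing description of planar signed graphs of prescribed unbalanced girth that would guarantee the signature packing --- and at present the conjecture is established only for small values of $k$ and for restricted subfamilies.
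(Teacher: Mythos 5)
You were asked to prove a statement that the paper itself presents as an open conjecture, and you correctly recognized this: the paper contains no proof of Conjecture~\ref{conj:PlanarToSPC(k)}, only a restatement in the language of bounds (Conjecture~\ref{conj:PlanarToSPC(k)-restated}), the packing reformulation via Theorem~\ref{thm:SPC(k)-packing}, and the record that the cases $k\leq 7$ are known from the literature, with $k=2$ being the four-color theorem. Your identification of the parity dichotomy, of the packing reformulation as the natural combinatorial recasting, and of the 4CT as an unavoidable base case all match the paper's own framing, so there is no gap in the sense of a false or missing step --- there was nothing to prove, and you did not pretend otherwise.

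One substantive correction, though, to your third paragraph. You dismiss the extended-double-cover route on the grounds that it ``does not preserve planarity in any directly exploitable way'' and does not relate the conjecture at distinct dimensions. This is exactly what the paper's Theorem~\ref{thm:B->EDC(B)} contradicts: given $(G,\sigma)\in\mathcal{SP}_{l+1}$ and a packing of $l+1$ signatures (available when the conjecture holds at dimension $l$), contracting the negative edges of one signature produces a signed graph $(G^{*},\sigma_1)$ that is still planar (contraction is a minor operation) and lies in $\mathcal{SP}_{l}$; a map of $(G^{*},\sigma_1)$ to a bound $\hat{B}$ then lifts, via the switching cut separating the two relevant signatures, to a map of $(G,\sigma)$ to $EDC(\hat{B})$. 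This establishes the equivalence of Conjecture~\ref{conj:PlanarToSPC(k)} with the propagation statement Conjecture~\ref{conj:SP_kBound->SP_k+1Bound}, and it is precisely this dimension-by-dimension propagation, with the 4CT as base, that underlies the known cases $k\leq 7$ (each step requiring genuinely new input --- packing and edge-coloring results --- to discharge the conditional hypothesis). So while you are right that no unconditional propagation step is known, the EDC induction is not a dead end; it is the paper's main structural contribution to understanding the conjecture, and the planarity issue you raise is resolved there by working with contractions rather than with $EDC$ applied to the planar side.
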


In other words the conjecture suggests that for a planar signed graph to map to $SPC(k)$ the necessary conditions of the no-Homomorphism lemma, Lemma~\ref{lem:No-Hom}, are also sufficient.  We remark that $g_{ij}(SPC(k))= g_{ij}(C_{-k-1})$ and thus the condition in the conjecture can be replaced with $g_{ij}(G,\sigma) \geq g_{ij}(C_{-k-1})$. 

Through the relation to packing signatures, and duality, the conjecture is strongly connected to a number of important conjectures. E.g., to some conjectures on edge-chromatic number and fractional edge-coloring of planar graphs, and to a conjecture on classification of binary clutters. It is through some of these connections, and based on results of \cite{NY23, NY24, DKK16, CES15, CEKS15 } that the conjecture is verified for $k\leq 7$, see \cite{NRS13}. 

The conjecture is also related to other notions such as the fractional chromatic number of planar graphs, their circular chromatic number and so on. We refer to \cite{N13} for more on this subject.

We believe better understanding of the signed projective cubes may help to better understanding of this deep conjecture and, to this end, we present a strengthening of the conjecture using an inductive definition provided here (Section~\ref{sec:InductiveDefinition}).

Given a class $\mathcal{C}$ of signed graphs, we say a signed graph $\hat{B}$ bounds $\mathcal{C}$ if every signed graph in $\mathcal{C}$ admits a homomorphism to $\hat{B}$. Let $\mathcal{SP}_{k}$ be the class of signed planar graphs $(G, \sigma)$ satisfying $g_{ij}(G,\sigma) \geq g_{ij}(C_{-k})$. Observe that for odd values of $k$, each signed graph $(G, \sigma)$ in $\mathcal{SP}_{k}$ is switch-equivalent to $(G, -)$ and $G$ is a graph of odd girth at least $k$. For even values of $k$, the members of $\mathcal{SP}_{k}$ are signed bipartite planar graphs of negative girth at least $k$. With this terminology Conjecture~\ref{conj:PlanarToSPC(k)} can be restated as:

\begin{conjecture}[Conjecture~\ref{conj:PlanarToSPC(k)} restated]\label{conj:PlanarToSPC(k)-restated}
The class $\mathcal{SP}_{k}$ of signed planar graphs is bounded by $SPC(k-1)$. 
\end{conjecture}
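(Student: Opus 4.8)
Because this statement is merely a reindexing of Conjecture~\ref{conj:PlanarToSPC(k)} (via the identity $g_{ij}(SPC(k))=g_{ij}(C_{-k-1})$ noted just above it), proving it means establishing the full generalized four-colour conjecture. Any honest plan must therefore be organized around reductions that isolate the genuinely hard content and make contact with the range already verified, $k\le 7$. The plan is to separate the two parities of $k$ and link them through the extended double cover. For odd $k$, a member of $\mathcal{SP}_k$ is switch-equivalent to $(G,-)$ with $G$ planar of odd girth at least $k$; for even $k$, members are signed bipartite planar graphs of negative girth at least $k$. In both cases Theorem~\ref{thm:SPC(k)-packing} rewrites the desired homomorphism $(G,\sigma)\to SPC(k-1)$ as a purely combinatorial \emph{packing} condition: a family of $k$ signatures, each switch-equivalent to $\sigma$, such that every edge of $G$ is negative in exactly one of them. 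So the first step is to replace ``bounded by $SPC(k-1)$'' throughout by this edge-partition statement.

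The second step is to pass to the planar dual. A signature of a signed planar graph corresponds in the dual to an element of the cycle space, and the packing condition becomes the statement that a certain binary clutter, built from the negative edges together with the cut/cycle space of the dual, packs. The merit of this translation is that the girth inequalities defining $\mathcal{SP}_k$ — exactly the necessary conditions coming from the no-homomorphism lemma (Lemma~\ref{lem:No-Hom}) — turn into the hypothesis that this clutter has no small odd obstruction, so the conjecture reads as an idealness/packing property of the clutter, placing it alongside the known equivalent conjectures on edge-colouring and on binary clutters.

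With these reformulations in hand I would attempt an induction on $k$ driven by the extended double cover. Since $SPC(k)=EDC(SPC(k-1))$ (Definition~\ref{def:SPC as EDC}), and since Corollary~\ref{cor:EDC bip-antiBalance} and Proposition~\ref{porp:g_ijEDC} move between the odd class $\mathcal{SP}_k$ and the even class $\mathcal{SP}_{k+1}$ while raising the relevant girth by exactly one and transforming the target bound correspondingly, an inductive step of the form ``$\mathcal{SP}_k$ bounded by $SPC(k-1)$ $\Rightarrow$ $\mathcal{SP}_{k+1}$ bounded by $SPC(k)$'' would, together with the base case $k=3$ (which is precisely the four-colour theorem, as $SPC(2)\sim(K_4,-)$), bootstrap all higher dimensions uniformly.

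The main obstacle — and the reason the conjecture is open — is exactly the planarity of this inductive step. The extended double cover roughly doubles a graph and destroys planarity, so a planar member of $\mathcal{SP}_k$ does not yield a planar member of $\mathcal{SP}_{k+1}$ (nor conversely) under $EDC$, and the clean girth arithmetic of Proposition~\ref{porp:g_ijEDC} cannot be exploited while staying on the sphere. Consequently no parity-to-parity induction is known to close, and the verified cases $k\le 7$ instead rest on case-specific duality and binary-clutter arguments rather than on a single reduction. A realistic partial target is therefore not the full conjecture but either a planarity-preserving surrogate for $EDC$, or a clutter-level reduction of the even (bipartite) instances to the odd (four-colour-type) instances embedded on the same surface.
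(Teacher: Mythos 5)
This statement is a \emph{conjecture}, not a theorem: the paper offers no proof of it, only (i) verification for $k\le 8$ (i.e., $SPC(i)$ bounds $\mathcal{SP}_{i+1}$ for $i\le 7$) via citations to external work, and (ii) a proof that it is \emph{equivalent} to Conjecture~\ref{conj:SP_kBound->SP_k+1Bound}, via Theorem~\ref{thm:B->EDC(B)}. Your proposal is appropriately honest in not claiming a proof, and your reformulations — replacing the homomorphism to $SPC(k-1)$ by the signature-packing condition of Theorem~\ref{thm:SPC(k)-packing}, passing to the planar dual and binary clutters, and organizing everything around an $EDC$-driven induction anchored at the four-colour theorem — are exactly the reductions the paper itself discusses in Sections~\ref{sec:Hom-Bound} and in the subsection on equivalent conjectures. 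To that extent your plan is faithful to the paper's viewpoint.

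However, your diagnosis of \emph{why} the induction cannot be closed is not the right one, and it obscures what the paper actually proves. You claim the obstacle is that ``the extended double cover destroys planarity,'' so that a planar member of $\mathcal{SP}_k$ cannot be converted to a planar member of $\mathcal{SP}_{k+1}$ (or conversely). But the paper's inductive step (Theorem~\ref{thm:B->EDC(B)}) never applies $EDC$ to the planar graph at all: $EDC$ is applied only to the \emph{bound} $\hat{B}$, which need not be planar, while on the graph side one goes \emph{down} from $(G,\sigma)\in\mathcal{SP}_{l+1}$ to $G^{*}\in\mathcal{SP}_{l}$ by contracting the negative edges of one signature $\sigma_{l+1}$ — an operation that preserves planarity. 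So a planarity-preserving reduction between consecutive parities exists and is carried out in the paper; your proposed ``realistic partial target'' (a planarity-preserving surrogate) is in fact already achieved there. The genuine obstruction is circularity: to produce the packing $\sigma_1,\ldots,\sigma_{l+1}$ that makes the contraction argument work, one must already know that $(G,\sigma)$ maps to $SPC(l)$, i.e., the conjecture itself at level $l+1$. This is precisely why Theorem~\ref{thm:B->EDC(B)} yields only an equivalence between the two conjectures (and conditional corollaries such as Corollary~\ref{SP_6<K33M} and Corollary~\ref{cor:P7<K3*C4}), rather than an unconditional bootstrap from the four-colour theorem. Any serious attempt at the conjecture must attack that packing hypothesis directly — e.g., through the dual edge-colouring/clutter formulations you mention — not repair a planarity defect of $EDC$.
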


We now propose the following conjecture and show that, if true, while it clearly generalizes Conjecture~\ref{conj:PlanarToSPC(k)}, it is in fact equivalent to it.

\begin{conjecture}\label{conj:SP_kBound->SP_k+1Bound}
If $\mathcal{SP}_{k}$ is bounded by a signed graph $\hat{B}$, then  $\mathcal{SP}_{k+1}$ is bounded by $EDC(\hat{B})$.
\end{conjecture}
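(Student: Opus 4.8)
The plan is to attack the conditional directly, by constructing for each member of $\mathcal{SP}_{k+1}$ a \emph{reduced} signed planar graph that lies in $\mathcal{SP}_{k}$, applying the hypothesis to it, and then lifting the resulting homomorphism through the $EDC$ construction. The structural observation driving everything is that a homomorphism $\varphi\colon (G,\sigma)\to EDC(\hat B)$ is nothing but a pair $(\psi,b)$ with $\psi\colon V(G)\to V(\hat B)$ recording the underlying vertex and $b\colon V(G)\to\{0,1\}$ recording the ``side'' $x_0$ or $x_1$. Since the negative edges of $EDC(\hat B)$ are exactly the vertical edges $x_0x_1$, a negative edge of a suitable switching $(G,\sigma^\dagger)$ of $(G,\sigma)$ must be sent to a vertical edge (so $\psi$ is constant on it and $b$ flips across it), while a positive edge is sent to a positive edge of $EDC(\hat B)$ whose type is dictated by whether $b$ flips. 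This suggests a reduction operator $R$ acting as a left adjoint of $EDC$: choose $\sigma^\dagger$ equivalent to $\sigma$ and let $R(G,\sigma)$ be obtained by contracting the negative edges of $(G,\sigma^\dagger)$, retaining on the remaining edges the signature pulled back from $\hat B$.

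First I would prove the lifting lemma, which is the direction of the adjunction I actually use: if $R(G,\sigma)\to\hat B$, then $(G,\sigma)\to EDC(\hat B)$. The argument is explicit. A homomorphism $\psi\colon R(G,\sigma)\to\hat B$ pulls back to a map $\psi$ on $V(G)$ that is constant on each contracted edge; I then define $b$ so that it differs across exactly the contracted edges together with the edges whose image is a negative edge of $\hat B$. Unwinding the edge rules in the definition of $EDC$ shows that every negative edge of $(G,\sigma^\dagger)$ lands on a vertical edge and every positive edge lands on a positive edge of the correct type, so $(\psi,b)$ is a homomorphism to $EDC(\hat B)$. The point making $b$ well defined is that the contracted set was chosen to represent the switching class of $\sigma$: the prescribed ``difference pattern'' across edges is then a coboundary, hence a genuine vertex $2$-colouring.

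Next I would establish the two closure properties of $R$ needed to invoke the hypothesis. Planarity is immediate, since contraction preserves it. For the girth the intended numerology is precisely the shift recorded in Proposition~\ref{porp:g_ijEDC}: passing from $(G,\sigma)$ through $R$ should lower the $g_{ij}$ thresholds in exactly the pattern that carries the thresholds of $C_{-(k+1)}$ to those of $C_{-k}$, so that $R(G,\sigma)\in\mathcal{SP}_{k}$. Granting this, the hypothesis yields $R(G,\sigma)\to\hat B$, the lifting lemma yields $(G,\sigma)\to EDC(\hat B)$, and since $(G,\sigma)\in\mathcal{SP}_{k+1}$ was arbitrary we would conclude that $EDC(\hat B)$ bounds $\mathcal{SP}_{k+1}$.

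The hard part will be this girth control, and it is where I expect the plan to genuinely stall. Contraction only decreases the $g_{ij}$ parameters, so a careless choice of the representative $\sigma^\dagger$ can create short unbalanced closed walks and push $R(G,\sigma)$ out of $\mathcal{SP}_{k}$; what is required is a choice of which edges to fold for which no such short walk appears, and producing one uniformly over all signed planar inputs is the combinatorial-topological heart of the matter. It is here that planarity must enter essentially, presumably through planar duality and the structure of the relevant cuts and $T$-joins, rather than as a black box. I should be candid that this step cannot be completed unconditionally: by the inductive scheme implicit above, a fully general such fold would yield Conjecture~\ref{conj:PlanarToSPC(k)-restated} for every $k$, which is open beyond $k\le 7$. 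The realistic deliverable of this plan is therefore the lifting lemma together with a clean reduction of the stated conjecture to the single assertion that every member of $\mathcal{SP}_{k+1}$ admits a fold landing inside $\mathcal{SP}_{k}$, with that assertion isolated as the true obstacle.
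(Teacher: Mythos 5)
The statement you were set is, in the paper itself, an open conjecture, and your closing self-assessment is exactly right: no unconditional proof exists, and (as you observe) a uniform ``fold'' into $\mathcal{SP}_{k}$ for all inputs would, by iterating from the base case $\mathcal{SP}_{2}$ bounded by $SPC(1)$, reprove Conjecture~\ref{conj:PlanarToSPC(k)-restated} for every $k$, which is known only for $k\le 7$. What the paper proves instead is the conditional Theorem~\ref{thm:B->EDC(B)}, and your proposal reconstructs its architecture faithfully. Your lifting lemma is precisely the second half of the paper's proof: there, the pulled-back signature $\sigma'$ and the signature $\sigma_{l+1}$ whose negative edges were contracted are shown to be switching equivalent (by comparing signs of cycles, using that only positive edges of $\sigma_1$ are contracted) and to share no negative edge, so their negative sets jointly form an edge cut $(A,A^{\complement})$; your side function $b$ is exactly the indicator of this cut, and your decomposition of a map to $EDC(\hat B)$ as a pair $(\psi,b)$ is the paper's assignment $\varphi(v)\in\{x_0,x_1\}$. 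Your justification that $b$ is well defined (``the difference pattern is a coboundary'') is thinner than the paper's cycle-sign argument but is the same idea and is sound.

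Where the two accounts differ is in how the fold is produced when it is available, and here the paper has one concrete ingredient your sketch lacks: under the hypothesis that $(G,\sigma)\to SPC(l)$, the packing Theorem~\ref{thm:SPC(k)-packing} yields $l+1$ pairwise negative-edge-disjoint signatures equivalent to $\sigma$; contracting the negative edges of $\sigma_{l+1}$ gives $G^{*}$, and the $l$ surviving signatures certify, via a parity count on how many contracted edges a cycle loses, that $(G^{*},\sigma_1)\in\mathcal{SP}_{l}$ --- exactly the $g_{ij}$ shift of Proposition~\ref{porp:g_ijEDC} you anticipated. Notably, planarity enters this conditional argument only through closure under contraction (the paper says as much in its minor-closed extension), not through duality and $T$-joins as you speculated; duality and edge-coloring results appear only in the proofs of the known cases $k\le 7$ of the hypothesis itself. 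So your isolated ``fold assertion'' is equivalent to Conjecture~\ref{conj:PlanarToSPC(k)-restated}, and the paper's claimed equivalence of the two conjectures is your reduction made two-directional by the packing theorem; as a blind attempt at an open conjecture, your deliverable matches the paper's actual content, minus the explicit packing mechanism.
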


Following Definition~\ref{def:SPC as EDC} of signed projective cubes, and noting that $\mathcal{SP}_{2}$ is clearly bounded by $SPC(1)$, it is clear that Conjecture~\ref{conj:SP_kBound->SP_k+1Bound} contains Conjecture~\ref{conj:PlanarToSPC(k)-restated}. In the following we show that the inverse implication also holds, and thus the two conjectures are equivalent. 

\begin{theorem}\label{thm:B->EDC(B)}
Suppose $SPC(i)$ bounds $\mathcal{SP}_{i+1}$ for every $i$, $1\leq i\leq k$. Let $\hat{B}$ be a signed graph which bounds the class $\mathcal{SP}_{l}$, for some $l$, $l\leq k$. Then $EDC(\hat{B})$ bounds the class $\mathcal{SP}_{l+1}$.
\end{theorem}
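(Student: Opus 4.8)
The plan is to reduce the statement to the single assertion that every $(G,\sigma)\in\mathcal{SP}_{l+1}$ admits a signed graph $(G',\sigma')\in\mathcal{SP}_l$ with $(G,\sigma)\to EDC(G',\sigma')$. Granting this, the conclusion follows in three strokes: since $\hat B$ bounds $\mathcal{SP}_l$ we get $(G',\sigma')\to\hat B$; since $EDC$ is a functor on the category of signed graphs with homomorphisms (switching at a vertex $x$ corresponds to interchanging $x_0$ and $x_1$, see \cite{NSZ21}), the homomorphism $(G',\sigma')\to\hat B$ lifts to $EDC(G',\sigma')\to EDC(\hat B)$; and composing $(G,\sigma)\to EDC(G',\sigma')\to EDC(\hat B)$ finishes the argument.

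To construct $(G',\sigma')$ I would invoke the hypothesis with $i=l$ (legitimate since $l\le k$), so that $SPC(l)$ bounds $\mathcal{SP}_{l+1}$ and there is a homomorphism $\phi\colon (G,\sigma)\to SPC(l)$. By Definition~\ref{def:SPC as EDC} this reads as $\phi\colon (G,\sigma)\to EDC(SPC(l-1))$. First switch $(G,\sigma)$ to a signature $\sigma'$ for which $\phi$ preserves edge-signs, and write $\phi(v)=\psi(v)_{\ell(v)}$, where $\psi(v)\in V(SPC(l-1))$ is the base vertex and $\ell(v)\in\{0,1\}$ is the level. The key observation is that the only negative edges of $EDC(SPC(l-1))$ are the matching edges $x_0x_1$; consequently every negative edge $uv$ of $(G,\sigma')$ satisfies $\psi(u)=\psi(v)$ and $\ell(u)\ne\ell(v)$, so $\psi$ is constant on each component $w$ of the subgraph $M$ spanned by the negative edges of $(G,\sigma')$. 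I would then take $(G',\sigma')$ to be the contraction $(G,\sigma')/M$, assigning to the edge joining two $M$-components $w,w'$ the sign dictated by the parity of $\ell$ across the corresponding positive edge (equivalently, by the sign of the base edge $\psi(w)\psi(w')$ in $SPC(l-1)$), and deleting parallel edges of equal sign.

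Two verifications then complete the plan. First, $\psi$ descends to a homomorphism $(G',\sigma')\to SPC(l-1)$, so by the no-homomorphism lemma (Lemma~\ref{lem:No-Hom}) together with the identity $g_{ij}(SPC(l-1))=g_{ij}(C_{-l})$ (the remark following Conjecture~\ref{conj:PlanarToSPC(k)}) we obtain $g_{ij}(G',\sigma')\ge g_{ij}(C_{-l})$ for every $ij\in\mathbb{Z}_2^2$; as a contraction of a planar graph, $(G',\sigma')$ is planar, whence $(G',\sigma')\in\mathcal{SP}_l$. Second, setting $\Phi(v)=w_{\ell(v)}$ with $w$ the $M$-component of $v$ gives a homomorphism $(G,\sigma')\to EDC(G',\sigma')$: edges of $M$ land on negative edges $w_0w_1$, while a positive edge $uv$ with $u\in w$, $v\in w'$ maps to $w_{\ell(u)}w'_{\ell(v)}$, which is a strip when $\ell(u)=\ell(v)$ and a twisted strip when $\ell(u)\ne\ell(v)$, matching exactly the sign we assigned to $ww'$. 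Hence $(G,\sigma)\to EDC(G',\sigma')$, as needed.

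The main obstacle I anticipate is the bookkeeping in the contraction: one must check that the sign assigned to an edge of $(G',\sigma')$ is well defined, independent of the representative chosen between two $M$-components, and that contracting every negative edge never manufactures a negative loop or an inconsistent digon. Both points are settled by the fact that $\ell(u)\oplus\ell(v)$ across a positive edge is forced by the sign of the image edge $\psi(w)\psi(w')$ in the simple signed graph $SPC(l-1)$; since $SPC(l-1)$ carries no negative loop, neither does $(G',\sigma')$, and any surviving equal-sign parallel edges simplify without disturbing the homomorphisms through $SPC(l-1)$. The remaining ingredients, namely functoriality of $EDC$ and the girth identity for signed projective cubes, are quoted from \cite{NSZ21} and from the remark following Conjecture~\ref{conj:PlanarToSPC(k)}.
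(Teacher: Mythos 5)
Your proposal is correct in substance, but it takes a genuinely different route from the paper's proof. The paper converts the homomorphism $(G,\sigma)\to SPC(l)$ into a packing of $l+1$ signatures via Theorem~\ref{thm:SPC(k)-packing}, contracts the negative edge set $E_{l+1}$ of one of them, verifies by a hands-on analysis of cycle signs and parities that the contracted graph lies in $\mathcal{SP}_l$, and then constructs the map to $EDC(\hat B)$ explicitly from a switching cut $(A,A^{\complement})$ separating two of the signatures. You instead work directly with the inductive definition $SPC(l)=EDC(SPC(l-1))$ (Definition~\ref{def:SPC as EDC}): you contract the preimage $M$ of the EDC matching, obtain membership of $G/M$ in $\mathcal{SP}_l$ essentially for free from the no-homomorphism lemma (Lemma~\ref{lem:No-Hom}) applied to the descended map $\psi$, and finish by functoriality of $EDC$ together with the factorization $(G,\sigma)\to EDC(G/M)\to EDC(\hat B)$. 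The combinatorial core is the same: the paper's $E_{l+1}$ can be taken to be exactly your $M$, and your level function $\ell$ is the paper's cut $(A,A^{\complement})$ in disguise. What your packaging buys is the elimination of the packing theorem and of the cycle-parity bookkeeping, replaced by two soft facts: the no-homomorphism lemma and functoriality of $EDC$ (the latter is a two-line check that strips go to strips and twists to twists, and is available in \cite{FNX23}, which is the more apt citation than \cite{NSZ21}); it also makes transparent the ``minor plus homomorphism'' structure that the paper only points out afterwards in its discussion of minor-closed families, since $G/M$ is visibly obtained from $(G,\sigma')$ by contracting edges.

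Two points need tightening. First, make the contraction convention explicit: when a component of $M$ is identified to a single vertex, \emph{all} of its edges are deleted, including edges closing cycles inside the component. Components of $M$ need not be trees: for instance $(C_4,-)$ lies in $\mathcal{SP}_5$ and maps to $SPC(4)$ with all four edges landing on a single matching edge, so $M$ can be a whole even cycle. With the correct convention no negative loop can arise, because every surviving edge of $G/M$ is the image of a positive edge of $G$, and you have already shown a positive edge cannot have both ends in one component; note that your stated justification (``$SPC(l-1)$ carries no negative loop'') is not what does the work here. Second, your well-definedness argument for the sign on $G/M$ appeals to the simplicity of $SPC(l-1)$, which fails precisely at $l=2$: $SPC(1)$ is the digon, so two components of $M$ may be joined by positive edges of both $\ell$-parities. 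This is harmless --- one simply keeps a digon in $G/M$, which is permitted in $\mathcal{SP}_2$ since $g_{-}(C_{-2})=2$, and both $\psi$ and $\Phi$ remain homomorphisms --- but the case must be treated rather than excluded by the simplicity argument.
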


This implies, in particular, that, if for every positive integer $k$ Conjecture~\ref{conj:SP_kBound->SP_k+1Bound} holds for the choice of $\hat{B}=SPC(k)$, then it holds for every choice of $\hat{B}$ that satisfies the condition of the conjecture. 

\begin{proof}
Given a signed graph $(G, \sigma)$ in $\mathcal{SP}_{l+1}$, our goal is to show that it maps to $EDC(\hat{B})$. Since $(G, \sigma)$ maps to $SPC(l)$, by Theorem~\ref{thm:SPC(k)-packing}, we have signatures $\sigma_i$, $i=1,\ldots, l+1$, which pairwise do not share a negative edge and each is equivalent to $\sigma$. Let $E_i$, $i=1, \ldots, l+1$, be the set of negative edges of $(G, \sigma_i)$.

Let $G^{*}$ be the graph obtained from $G$ by contracting all the edges in $E_{l+1}$, where we delete the contracted edges, but resulting parallel edges remain in $G^*$. With a minor abuse of notation, let $\sigma_i$, $i=1, \ldots, l$, be the signature on $G^*$ which assigns a ``$-$'' to the edges in $E_i$ of $G^*$. 
In other words, while the signature $\sigma_{l+1}$ is eliminated, the other $l$ signatures remain untouched. 

In what follows we conclude two facts: 1. That $(G^{*}, \sigma_i)$ and $(G^{*}, \sigma_j)$ are switching equivalent for any pair $i, j \in\{1, 2,\ldots, l\}$. 2. That $(G^{*}, \sigma_1)\in \mathcal{SP}_l$. To that end, given a cycle $C^{*}$ of $G^{*}$, we show that first of all it has the same sign in each of $(G^{*}, \sigma_i)$, $i=1,2, \ldots, l$, and, secondly, that if this sign is positive then $C^{*}$ is of even length and that all the negative ones are of the same parity.

Let $C$ be a cycle of $G$ which is contracted to $C^{*}$. Let $s$ be the number of negative edges of $C$ in $(G, \sigma_{l+1})$, noting that $s=0$ is a possibility. If $C^{*}$ is a positive cycle in say $(G^{*},\sigma_i)$ for some $i$, $i \leq l$, then $C$ is a positive cycle in $(G,\sigma_i)$ and, hence, it is a positive cycle in each of $(G, \sigma_j)$, $j=1,2, \ldots, l+1$. There are two conclusions from this fact: the first is that $C$ is of an even length because $(G,\sigma)\in \mathcal{SP}_{l+1}$, and the second is that $s$ is even number. In other words, $C^{*}$ is obtained from an even cycle $C$ by contracting an even number of edges. Thus $C^{*}$ is an even cycle as well. Similarly, if $C^{*}$ is a negative cycle of some $(G^{*}, \sigma_i)$, then the cycle $C$ of $G$ which is contracted to $C^{*}$ is a negative cycle in $(G, \sigma_{i})$, but then its parity is different from that of $C^{*}$ as $s$ is odd. Since all negative cycles in $(G, \sigma)$ are of the same parity as $l+1$, the negative cycles in $(G^{*}, \sigma_1)$ are of the same parity as $l$, and, moreover, each negative cycle having at least one negative edge in each of $(G^{*}, \sigma_i)$, $i=1, 2, \ldots, l$, is of length at least $l$. In other words: $(G^{*}, \sigma_1)\in \mathcal{SP}_{l}$.

Thus, by our assumption, $(G^*, \sigma_1)$  maps to $\hat{B}$. Let $\sigma'$ be the signature equivalent to $\sigma_1$ under which there is an edge-sign-preserving homomorphism $\phi$ of $(G^*, \sigma')$ to $\hat{B}$. To complete the proof, based on $\phi$, we will build the mapping $\varphi$ as an edge-sign-preserving homomorphism of $(G, \sigma_{l+1})$ to $EDC(\hat{B})$. Once again, with a bit abuse of notation, we let $(G, \sigma')$ be the signed graph whose negative edges are the negative edges of $(G^{*}, \sigma')$. Observe that the image $C^{*}$ of a cycle $C$ of $G$ under the contraction of edges in $E_{l+1}$ is a closed walk in $G^{*}$. Since all contracted edges $(E_{l+1})$ are positive in $(G, \sigma_1)$, we have $\sigma_{1}(C^{*})=\sigma_{1}(C)$. Since $(G^{*}, \sigma')$ is switching equivalent to $(G^{*}, \sigma_1)$, we have $\sigma'(C^{*})=\sigma_{1}(C)$. From the equivalence of $(G,\sigma_1)$ and $(G, \sigma_{l+1})$ we conclude that: $\sigma'(C)=\sigma_{l+1}(C)$. As $C$ is an arbitrary cycle, this implies that $(G,\sigma')$ and $(G, \sigma_{l+1})$ are switching equivalent. We observe furthermore that, since all negative edges of $(G,\sigma_{l+1})$ are contracted in order to get $G^*$ from $G$, and as the negative edges of $(G,\sigma')$ are lifted from those of $(G^{*},\sigma')$, there is no common negative edge between $(G, \sigma')$ and $(G,\sigma_{l+1})$.

Since $\sigma_{l+1}$ is switching-equivalent to $\sigma'$, and since they have no common negative edge, we must have an edge-cut $(A,A^\complement)$ whose edges are precisely those edges that are either negative in $(G, \sigma')$ or in $(G, \sigma_{l+1})$. If there are more than one possibility for choosing $A$ (and $A^\complement=V(G)\setminus A$), we take an arbitrary one. 

Recall that for each vertex $x$ of $\hat{B}$ there are two vertices $x_0$ and $x_1$ in $EDC(\hat{B})$. 
We now define a mapping of $(G,\sigma_{l+1})$  to $EDC(\hat{B})$ as follows. For a vertex $v$ of $(G, \sigma)$, let $v^*$ be its image in $(G^*,\sigma_1)$ and suppose $\phi(v^*)=x$ where $x$ is a vertex of $\hat{B}$.  If the vertex $v$ is in part $A$ of $G$, then define $\varphi(v)=x_0$. If $v$ is in part $A^\complement$, then define $\varphi(v)=x_1$. We claim that $\varphi$ is an edge-sign preserving homomorphism of $(G, \sigma_{l+1})$ to $EDC(\hat{B})$.  

For a negative edge $uv$ of $(G, \sigma_{l+1})$ we have $u^*=v^*$ (recall that negative edges of $(G, \sigma_{l+1})$ are contracted in order to form $G^*$). On the other hand, for $u$ and $v$ to be adjacent by a negative edge in $(G, \sigma_{l+1})$, one must be in $A$ and the other in $A^\complement$. Thus, if $\phi(v^*)=x$, then either $\varphi(v)=x_0$ and $\varphi(u)=x_1$ or vice versa. In both cases the negative edge $uv$  of $(G, \sigma_{l+1})$ is mapped (under $\varphi$) to a negative edge of $EDC(\hat{B})$. 

For positive edges of $(G, \sigma_{l+1})$ we consider two possibilities: I. $uv$ is positive in $(G, \sigma'$). II. $uv$ is negative in $(G, \sigma')$. 

In case $I$, $u$ and $v$ are either both in $A$ or both in $A^\complement$, by symmetry assume they are both in $A$. Since $u^*v^*$ is a positive edge of $(G^*, \sigma')$ and since $\phi$ is an edge-sign-preserving mapping, $\phi(u^*)\phi(v^*)$ is a positive edge of $\hat{B}$. Assume $\phi(u^*)=x$ and $\phi(v^*)=y$, then, by the definition of $EDC(\hat{B})$ and since $xy$ is a positive edge of $\hat{B}$, $x_0y_0$ (and $x_1y_1$) are positive edges of $EDC(\hat{B})$. Thus $\varphi(u)\varphi(v)=x_0y_0$ is a positive edge of $EDC(\hat{B})$.

In case $II$, the edge $uv$ is in the cut $(A,A^\complement)$. Without loss of generality, assume $u\in A$ and $v\in A^\complement$. Observe that $u^*v^*$ is also a negative edge of $(G^*, \sigma')$ and since $\phi$ is an edge-sign-preserving mapping of $(G^*,\sigma')$ to $\hat{B}$, $\phi(u^*)\phi(v^*)$ is a negative edge of $\hat{B}$. Assume $\phi(u^*)=x$ and $\phi(v^*)=y$. Then by the definition of $\varphi$, we have $\varphi(u)=x_0$ and $\varphi(y)=y_1$. As $xy$ is a negative edge of $\hat{B}$, in $EDC(\hat{B})$ we have two positive edges $x_0y_1$ and $x_1y_0$. On the other hand, as $uv$ is a positive edge of $(G, \sigma_{l+1})$, the mapping $\varphi$ preserves adjacency of $u$ and $v$ and the sign of the $uv$ edge with respect to the signature $\sigma_{l+1}$. That is as we desired.
\end{proof}

It has been verified that $SPC(i)$ bounds $\mathcal{SP}_{i+1}$ for $i\leq 7$: the case of $i=2$ is the four-color theorem. Applying induction on $i$, and thus using the 4CT as a base of the induction, the cases $i=3,4$ are proved in \cite{NY23} and \cite{NY24} (respectively) using the notion of packing signatures. The cases $i=5, 6, 7$ (again by induction on $i$) are implied through results of \cite{DKK16}, \cite{CEKS15}, and \cite{CES15} and result of \cite{NRS13} which provides the relation with the conjecture and an edge-coloring conjecture of Seymour. 

Hence, starting with the Gr\"otzsch theorem we have several corollaries. The first is a result of \cite{NW23}, we shall note that Theorem~\ref{thm:B->EDC(B)} is an extension of this result. We note that $(K_{3,3}, M)$ is a signed graph on $K_{3,3}$ where the edges of a perfect matching are assigned negative sign, the rest being positive. 

\begin{corollary}\label{SP_6<K33M}
The class $\mathcal{SP}_{6}$ is bounded by $(K_{3,3}, M)$.
\end{corollary}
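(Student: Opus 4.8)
The plan is to obtain Corollary~\ref{SP_6<K33M} as a direct instance of Theorem~\ref{thm:B->EDC(B)}, using the Gr\"otzsch theorem as the base case. The key preliminary observation is that $\mathcal{SP}_5$ is the class of signed planar graphs switch-equivalent to $(G,-)$ where $G$ has odd girth at least $5$, i.e.\ triangle-free planar graphs. By the Gr\"otzsch theorem every triangle-free planar graph is $3$-colorable, which is exactly a homomorphism to $K_3$; viewing $K_3$ as the all-negative signed graph $(K_3,-)$, this says that $\mathcal{SP}_5$ is bounded by the signed graph $\hat{B}=(K_3,-)$. Thus first I would fix $l=5$ and $\hat{B}=(K_3,-)$ and verify this boundedness statement explicitly.

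Next I would apply Theorem~\ref{thm:B->EDC(B)}. To invoke it I must check its hypothesis that $SPC(i)$ bounds $\mathcal{SP}_{i+1}$ for every $i\le k$ with $l\le k$; here taking $k=5$ suffices, and this is precisely the verified range quoted immediately before the corollary (the cases $i=2,3,4,5$, with $i=2$ being the 4CT). With the hypothesis in hand, Theorem~\ref{thm:B->EDC(B)} yields that $EDC(\hat{B})=EDC(K_3,-)$ bounds $\mathcal{SP}_{6}$. The only remaining task is then purely structural: to identify $EDC(K_3,-)$ with $(K_{3,3},M)$.

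The main obstacle is this last identification, which is where the real (if routine) work lies. I would carry it out directly from the definition of $EDC$ in Section~\ref{sec:InductiveDefinition}. Starting from $(K_3,-)$ with vertices $a,b,c$ and all three edges negative, the extended double cover has six vertices $a_0,a_1,b_0,b_1,c_0,c_1$; the three negative ``vertical'' edges $a_0a_1$, $b_0b_1$, $c_0c_1$ form a perfect matching, and each negative edge $xy$ of $(K_3,-)$ lifts to the two positive edges $x_0y_1$ and $x_1y_0$. One checks that the underlying graph is bipartite with parts $\{a_0,b_0,c_0\}$ and $\{a_1,b_1,c_1\}$, that every cross pair $x_iy_j$ with $x\ne y$ and $i\ne j$ is joined, and that the only missing cross edges are exactly the three matching edges $x_0x_1$, which are negative. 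Hence the underlying graph is $K_{3,3}$ and the negative edges form a perfect matching, so $EDC(K_3,-)=(K_{3,3},M)$ as required. I would also note that this computation is consistent with Corollary~\ref{cor:EDC bip-antiBalance}: since $(K_3,-)$ is of the form $(G,-)$, its extended double cover is signed bipartite, confirming that the underlying graph is indeed bipartite.

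Finally, combining these steps gives the statement: $\mathcal{SP}_6$ is bounded by $EDC(K_3,-)=(K_{3,3},M)$, which is the assertion of Corollary~\ref{SP_6<K33M}. I expect no genuine difficulty beyond the bookkeeping in the $EDC$ identification; the conceptual content is entirely carried by Theorem~\ref{thm:B->EDC(B)} together with the Gr\"otzsch theorem as the starting bound.
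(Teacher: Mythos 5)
Your proposal is correct and follows the paper's own route exactly: the paper derives this corollary by applying Theorem~\ref{thm:B->EDC(B)} with $l=5$, taking $\hat{B}=(K_3,-)$ via Gr\"otzsch's theorem and relying on the verified case that $SPC(5)$ bounds $\mathcal{SP}_6$, so that $EDC(K_3,-)\cong(K_{3,3},M)$ bounds $\mathcal{SP}_6$. Your explicit verification of the identification $EDC(K_3,-)=(K_{3,3},M)$ is the same bookkeeping the paper leaves implicit; the paper additionally remarks that a 4CT-independent proof exists via the density bound for $3$-critical signed graphs of \cite{BHNSZ23}, but that is an alternative, not its main derivation.
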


We should note our proof of this corollary, and the original proof in \cite{NW23}, relies on the proof of Conjecture~\ref{conj:PlanarToSPC(k)-restated} for $k=6$ which in turn is based on the proofs for $k=5$, $k=4$ and $k=3$, noting that the case $k=3$ is the 4CT. Based on a recent work, \cite{BHNSZ23}, this corollary can be proved independent of the 4CT. 

A signed graph $\hat{G}$ is said to be $r$-critical if $\chi_c(\hat{G})>r$ but every proper (signed) subgraph of it admits a circular $r$-coloring. It is proved in \cite{BHNSZ23} that every $3$-critical signed graph on $n$ vertices has at least $\frac{3n-1}{2}$ edges.  Toward a contradiction, assume that Corollary~\ref{SP_6<K33M} is not valid and that there is a signed bipartite graph of negative girth at least 6 which does not map to $(K_{3,3}, M)$. Moreover, assume among all such examples $(G, \sigma)$ is one with the smallest number of vertices and then the smallest number of edges.  
It is shown in \cite{NW23} that a signed bipartite graph admits a circular 3-coloring if and only if it maps to $(K_{3,3}, M)$. Since $(G, \sigma)$ does not map to $(K_{3,3}, M)$ and it is minimal for this property, it is a 3-critical signed graph. Thus it must have at least $\frac{3n-1}{2}$ edges. On the other hand, we claim that in any planar embedding of $(G,\sigma)$ every face of it must be a (negative) 6-cycle. That is by minimality of $(G, \sigma)$ and by the folding lemma of \cite{NRS13}. As such, by the Euler formula, we have $e=\frac{3n-6}{2}$, contradicting the lower bound on the number of edges.

Applying the same to this corollary, we get a (signed) graph on 12 vertices whose odd girth is 5 and bounds the class $\mathcal{SP}_{7}$. As in this case we may switch all signed graphs so that all edges are negative, the homomorphism problem is reduced to homomorphisms of graphs. Thus we may state the results in the language of graphs. To this end, and also because of an independent interest, we present a simplified version of $EDC(EDC(G, -))$ purely in the language of graphs. 

Given a graph $G$ and $4$-cycle $C$ on vertices $1,2,3,4$ in the cyclic order, we define $C*G$ to be the graph on vertex set $V(G)\times [4]$ whose edges are as follows. For each vertex $u$ of $G$, the four vertices $(u,1)$, $(u,2)$, $(u,3)$, and $(u,4)$ induce a 4-cycle in this cyclic order. For each edge $uv$ of $G$ the following four pairs form edges: $(u,1)(v,3)$, $(u,2)(v,4)$, $(u,3)(v,1)$, and $(u,4)(v,2)$. The products $K_2 * C_4$, $K_3 * C_4$ and $K_4 * C_4$ are presented in Figure~\ref{fig:K2C4}, Figure~\ref{fig:K3*C4}, and Figure~\ref{fig:K4*C4} respectively. One may observe that $K_4 * C_4$ is isomorphic to $PC(4)$ depicted differently in Figure~\ref{fig:SPC(4)}.

\begin{figure}[ht]
\centering
\resizebox{8cm}{5cm}{
\begin{tikzpicture}[thick,scale=1]
\foreach \i in {1,2,3,4}
{
\draw[rotate=90*\i] (1, 1) node[circle, fill=white, draw=black!80, inner sep=0mm, minimum size=2mm]  (A\i){$\i$};
}

\foreach \i in {1,2,3,4}
{
\draw[rotate=90*\i] (1, 1) node[circle, yshift=+5cm, fill=white, draw=black!80, inner sep=0mm, minimum size=2mm] (B\i){$\i$};
}

\draw (-7, 0) node[circle,  fill=white, draw=black!80, inner sep=0mm, minimum size=3.5mm] (u){$u$};

\draw (-7, 5) node[circle,  fill=white, draw=black!80, inner sep=0mm, minimum size=3.5mm] (v){$v$};  

\draw[very thick] (u) -- (v);  

\coordinate (O) at (0,2.5);

\draw[->] (-5,2.5) -- (-3, 2.5);

\begin{scope}[on background layer]

    \draw[dashed] (A1) -- (B3);
    \draw[dashed] (A2) -- (B4);
    \draw[dashed] (A3) -- (B1);
    \draw[dashed] (A4) -- (B2);

\draw [fill=lightgray, thick, opacity=.8] (A1.center) -- (A2.center) -- (A3.center) -- (A4.center) -- (A1.center);



\draw [fill=lightgray, thick, opacity=.8] (B1.center) -- (B2.center) -- (B3.center) -- (B4.center) -- (B1.center);

\end{scope}

\end{tikzpicture}}
\caption{$K_2 * C_4$}
\label{fig:K2C4}
\end{figure}

\begin{figure}[ht]
\centering 

\begin{minipage}{.48\textwidth}
\centering
\begin{tikzpicture}
\foreach \i in {1,...,15}
{
\draw[rotate=30*\i] (2.2, 2.2) node[circle, fill=white, draw=black!80, inner sep=0mm, minimum size=2mm]  (\i){};
}

\foreach \i in {1,...,12}
{
    
\draw (\i) -- (\the\numexpr\i+1);
}

\foreach \i in {1,...,6}
{
    
\draw (\i) -- (\the\numexpr\i+6);
}

\foreach \i in {1,3,...,11}
{
    
\draw (\i) -- (\the\numexpr\i+3);
}

\end{tikzpicture}
\end{minipage}
\begin{minipage}{.49\textwidth}
\centering
\begin{turn}{190}
\begin{tikzpicture}[scale=.5]

\foreach \i in {1,2,3,4}
{
\draw[rotate=90*\i+45] (1, 1) node[circle, fill=white, draw=black!80, inner sep=0mm, minimum size=2mm]  (A\i){};
}

\foreach \i in {1,2,3,4}
{
\draw[rotate=90*\i+45] (1, 1) node[circle, yshift=5cm, xshift=-2cm, fill=white, draw=black!80, inner sep=0mm, minimum size=2mm] (B\i){};
}

\foreach \i in {1,2,3,4}
{
\draw[rotate=90*\i+45] (1, 1) node[circle, yshift=+4cm, xshift=4cm, fill=white, draw=black!80, inner sep=0mm, minimum size=2mm] (C\i){};
}

\begin{scope}[on background layer]

    \draw (A1) -- (B3);
    \draw (A2) -- (B4);
    \draw (A3) -- (B1);
    \draw (A4) -- (B2);

    \draw (A1) -- (C3);
    \draw (A2) -- (C4);
    \draw (A3) -- (C1);
    \draw (A4) -- (C2);

    \draw (B1) -- (C3);
    \draw (B2) -- (C4);
    \draw (B3) -- (C1);
    \draw (B4) -- (C2);

\draw [fill=lightgray, opacity=.8] (A1.center) -- (A2.center) -- (A3.center) -- (A4.center) -- (A1.center);
\draw [fill=lightgray, opacity=.8] (B1.center) -- (B2.center) -- (B3.center) -- (B4.center) -- (B1.center);
\draw [fill=lightgray, opacity=.8] (C1.center) -- (C2.center) -- (C3.center) -- (C4.center) -- (C1.center);
\end{scope}
\end{tikzpicture}
\end{turn}
\end{minipage}

\caption{Two presentations of $K_3 * C_4$}
\label{fig:K3*C4}
\end{figure}
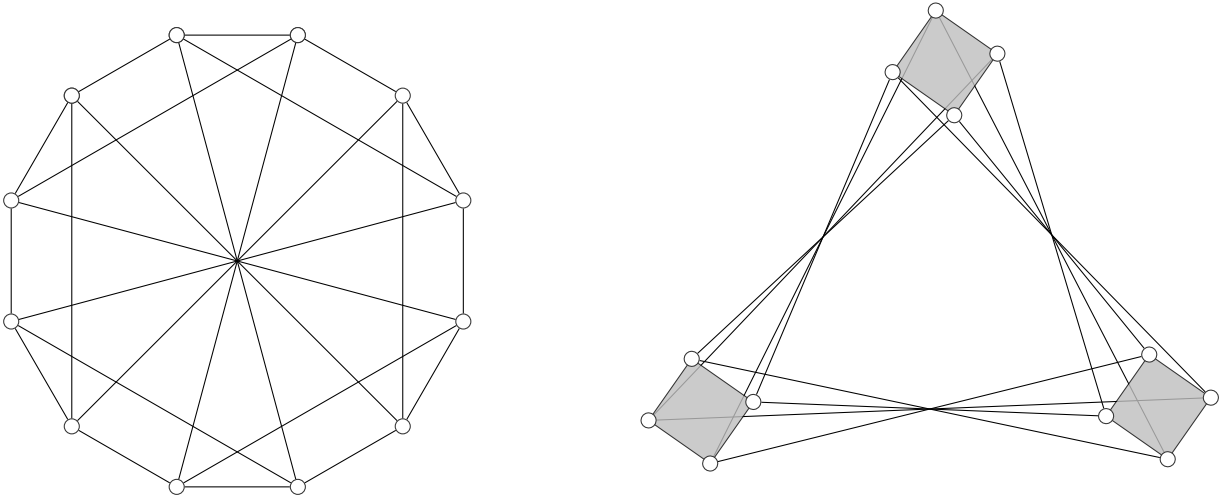

\begin{corollary}\label{cor:P7<K3*C4}
The class $\mathcal{P}_7$ of planar graphs of odd girth at least 7 is bounded by $K_3*C_4$.
\end{corollary}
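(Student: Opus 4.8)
The plan is to obtain the statement as the next step of the very induction that produced Corollary~\ref{SP_6<K33M}: apply Theorem~\ref{thm:B->EDC(B)} once more and then translate the resulting signed bound into the language of graphs. First I would record the reduction to signed graphs. Since $7$ is odd, every member of $\mathcal{SP}_{7}$ is switch-equivalent to $(G,-)$ with $G$ planar of odd girth at least $7$, and conversely each such $G$ yields $(G,-)\in\mathcal{SP}_{7}$; thus $\mathcal{P}_7$ is, up to switching, exactly $\mathcal{SP}_{7}$. Consequently it suffices to exhibit a signed graph $\hat{B}$ that bounds $\mathcal{SP}_{7}$ and is switch-equivalent to $(K_3*C_4,-)$, because then a homomorphism $(G,-)\to\hat{B}$ is precisely a graph homomorphism $G\to K_3*C_4$.

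Next I would invoke Theorem~\ref{thm:B->EDC(B)} with $l=6$ and $\hat{B}=(K_{3,3},M)$. Its hypotheses are met: $SPC(i)$ bounds $\mathcal{SP}_{i+1}$ for all $i\le 7$ (the verified range, with $i=2$ being the four-color theorem), so in particular for $1\le i\le 7$; and Corollary~\ref{SP_6<K33M} supplies that $(K_{3,3},M)$ bounds $\mathcal{SP}_{6}$. The theorem then yields that $EDC((K_{3,3},M))$ bounds $\mathcal{SP}_{7}$.

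It remains to identify this bound with $K_3*C_4$. I would note that $(K_{3,3},M)$ is itself $EDC((K_3,-))$: each negative edge of the all-negative triangle produces the two cross edges of $K_{3,3}$, while the three new $EDC$-edges form the negative matching $M$. Hence the bound is $EDC(EDC((K_3,-)))$. Because $(K_{3,3},M)$ is signed bipartite, Corollary~\ref{cor:EDC bip-antiBalance} provides a switching of $EDC((K_{3,3},M))$ after which all edges are negative, say $(B,-)$, whose underlying graph $B$ has odd girth $g_{-}((K_{3,3},M))+1=5$ (and $g_{-}((K_{3,3},M))=g_{-}((K_3,-))+1=4$ by the same corollary). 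Combined with the first paragraph, once $B\cong K_3*C_4$ is established this gives $G\to K_3*C_4$ for every $G\in\mathcal{P}_7$.

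The routine but essential part, which I expect to be the real content of the argument, is the final identification $EDC(EDC((K_3,-)))\cong(K_3*C_4,-)$ through the explicit $C*G$ description given just before the statement. The plan here is to index the $12$ vertices as triples $(x,a,b)$ with $x\in V(K_3)$ and $a,b\in\{0,1\}$, relabel $(x,0,0),(x,0,1),(x,1,0),(x,1,1)\mapsto(x,1),(x,2),(x,3),(x,4)$, and then verify two things by direct inspection: that for each fixed $x$ the four vertices induce the prescribed $4$-cycle (coming from the negative edge $x_0x_1$ of $(K_{3,3},M)$ together with the two $EDC$-edges), and that across an edge $xy$ of $K_3$ the eight cross edges match the pattern $(x,1)(y,3),\,(x,2)(y,4),\,(x,3)(y,1),\,(x,4)(y,2)$. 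The main obstacle is purely bookkeeping, and the odd-girth count of $5$ above serves as a consistency check that the $12$-vertex bound has indeed been identified correctly.
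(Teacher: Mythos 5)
Your proposal follows essentially the same route as the paper: apply Theorem~\ref{thm:B->EDC(B)} once more to Corollary~\ref{SP_6<K33M} to conclude that $EDC((K_{3,3},M))=EDC(EDC((K_3,-)))$ bounds $\mathcal{SP}_{7}$, use the fact that members of $\mathcal{SP}_{7}$ (and the bound itself, via Corollary~\ref{cor:EDC bip-antiBalance}) can be switched to all-negative signatures so the problem reduces to graph homomorphisms, and identify the resulting $12$-vertex graph of odd girth $5$ with $K_3*C_4$ through the explicit $C*G$ description. One minor bookkeeping correction: each edge $xy$ of $K_3$ contributes four cross edges (two from each of the positive edges $x_0y_1$ and $x_1y_0$ of $(K_{3,3},M)$), not eight, which matches exactly the four prescribed pairs $(x,1)(y,3)$, $(x,2)(y,4)$, $(x,3)(y,1)$, $(x,4)(y,2)$.
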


We note that the graph $K_3*C_4$ is 3-colorable. It has independence number 4 because any independent set of size at least 4 must take at least one pair from one of the three $C_4$. Such a pair then must be non adjacent in the said $C_4$. As all such pairs are isomorphic, having taken such a pair, the set of vertices not adjacent to either induces $2K_2$ and hence at most two more vertices can be added to the independent set. As the graph is vertex-transitive, its fractional chromatic number is $\frac{12}{4}=3$. Thus its circular chromatic number, which is sandwiched between fractional chromatic number and the chromatic number, is also 3. Hence this triangle-free bound on 12 vertices for $\mathcal{P}_7$, while of independent interest, does not improve any of the known bounds on circular or fractional chromatic number of this class of planar graphs. 

\begin{figure}[ht]
\centering
\resizebox{8cm}{8cm}{
\begin{tikzpicture}[thick,scale=.4]

\foreach \i in {1,2,3,4}
{
\draw[rotate=90*\i+45] (3, 3) node[circle, yshift=-1cm, xshift=-1, fill=white, draw=black!80, inner sep=0mm, minimum size=2.5mm]  (A\i){};
}

\foreach \i in {1,2,3,4}
{
\draw[rotate=90*\i+45] (3, 3) node[circle, yshift=6cm, xshift=-6cm, fill=white, draw=black!80, inner sep=0mm, minimum size=2.5mm] (B\i){};
}

\foreach \i in {1,2,3,4}
{
\draw[rotate=90*\i+45] (3, 3) node[circle, yshift=7cm, xshift=7cm, fill=white, draw=black!80, inner sep=0mm, minimum size=2.5mm] (C\i){};
}

\foreach \i in {1,2,3,4}
{
\draw[rotate=90*\i+45] (3, 3) node[circle, yshift=12cm, xshift=1cm, fill=white, draw=black!80, inner sep=0mm, minimum size=2.5mm]  (D\i){};
}

\begin{scope}[on background layer]

    \draw (A1) -- (B3);
    \draw (A2) -- (B4);
    \draw (A3) -- (B1);
    \draw (A4) -- (B2);

    \draw (A1) -- (C3);
    \draw (A2) -- (C4);
    \draw (A3) -- (C1);
    \draw (A4) -- (C2);

    \draw (B1) -- (C3);
    \draw (B2) -- (C4);
    \draw (B3) -- (C1);
    \draw (B4) -- (C2);

    \draw (D1) -- (A3);
    \draw (D2) -- (A4);
    \draw (D3) -- (A1);
    \draw (D4) -- (A2);

    \draw (D1) -- (B3);
    \draw (D2) -- (B4);
    \draw (D3) -- (B1);
    \draw (D4) -- (B2);

    \draw (D1) -- (C3);
    \draw (D2) -- (C4);
    \draw (D3) -- (C1);
    \draw (D4) -- (C2);

\draw [fill=lightgray, opacity=.8] (A1.center) -- (A2.center) -- (A3.center) -- (A4.center) -- (A1.center);
\draw [fill=lightgray, opacity=.8] (B1.center) -- (B2.center) -- (B3.center) -- (B4.center) -- (B1.center);
\draw [fill=lightgray, opacity=.8] (C1.center) -- (C2.center) -- (C3.center) -- (C4.center) -- (C1.center);
\draw [fill=lightgray, opacity=.8] (D1.center) -- (D2.center) -- (D3.center) -- (D4.center) -- (D1.center);
\end{scope}

\end{tikzpicture}}
\caption{$K_4 * C_4$}
\label{fig:K4*C4}
\end{figure}

\begin{corollary}
	The signed bipartite graph $EDC(K_3 * C_4)$ bounds the class of signed bipartite graphs of negative girth at least 8.
\end{corollary}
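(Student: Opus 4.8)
The plan is to obtain this statement as an immediate instance of Theorem~\ref{thm:B->EDC(B)}, read with the parameters $k=l=7$ and the bound $\hat{B}=K_3*C_4$. First I would translate Corollary~\ref{cor:P7<K3*C4} into the language of signed graphs. Since for the odd index $7$ every member of $\mathcal{SP}_7$ is switch-equivalent to an all-negative signed graph, the homomorphism problem for $\mathcal{SP}_7$ coincides with the graph-homomorphism problem for the class $\mathcal{P}_7$ of planar graphs of odd girth at least $7$. Thus Corollary~\ref{cor:P7<K3*C4}, which asserts that $\mathcal{P}_7$ is bounded by $K_3*C_4$, says exactly that the signed graph $(K_3*C_4,-)$ bounds $\mathcal{SP}_7$.

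Next I would check that the hypotheses of Theorem~\ref{thm:B->EDC(B)} are in place. Taking $k=7$, the theorem requires that $SPC(i)$ bound $\mathcal{SP}_{i+1}$ for every $i$ with $1\le i\le 7$; this is precisely the verified range recorded after the theorem (the case $i=2$ being the four-color theorem, the case $i=1$ being the trivial bound of $\mathcal{SP}_2$ by $SPC(1)$, and the cases $i=3,\dots,7$ following from \cite{NY23,NY24,DKK16,CEKS15,CES15,NRS13}). With $l=7\le k$ and $\hat{B}=(K_3*C_4,-)$ bounding $\mathcal{SP}_l=\mathcal{SP}_7$ by the previous paragraph, all hypotheses hold, and the conclusion of Theorem~\ref{thm:B->EDC(B)} yields that $EDC(\hat{B})=EDC(K_3*C_4)$ bounds $\mathcal{SP}_{l+1}=\mathcal{SP}_8$, i.e. the class of (planar) signed bipartite graphs of negative girth at least $8$.

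Finally, to confirm that the resulting bound is genuinely of the asserted type, I would invoke Corollary~\ref{cor:EDC bip-antiBalance}: since $(K_3*C_4,-)$ is trivially switching equivalent to an all-negative signed graph, $EDC(K_3*C_4)$ is a signed bipartite graph, and its negative girth equals $g_{-}(K_3*C_4,-)+1$, that is, one more than the odd girth of $K_3*C_4$. The only real care in this argument is bookkeeping: one must verify that the confirmed interval $i\le 7$ reaches exactly $l=7$ (the boundary case), and one must pass cleanly between the graph formulation ($\mathcal{P}_7$, $K_3*C_4$) and the signed formulation ($\mathcal{SP}_7$, $(K_3*C_4,-)$). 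I expect no genuine combinatorial obstacle beyond this translation, so the proof should be short, amounting to a verification that the data $(k,l,\hat{B})=(7,7,K_3*C_4)$ satisfies the hypotheses of Theorem~\ref{thm:B->EDC(B)}.
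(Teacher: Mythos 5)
Your proposal is correct and follows exactly the paper's (implicit) route: the corollary is obtained by one more application of Theorem~\ref{thm:B->EDC(B)}, with $\hat{B}=(K_3*C_4,-)$ bounding $\mathcal{SP}_7$ (the signed restatement of Corollary~\ref{cor:P7<K3*C4}) and $k=l=7$ lying within the verified range, yielding that $EDC(K_3*C_4)$ bounds $\mathcal{SP}_8$. Your added bookkeeping—translating between $\mathcal{P}_7$ and $\mathcal{SP}_7$, and invoking Corollary~\ref{cor:EDC bip-antiBalance} for the bipartiteness and negative girth of the bound—matches the paper's intent, including the reading of the target class as the planar one, $\mathcal{SP}_8$.
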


We note that $EDC(K_3 * C_4)$ is signed bipartite graph on 24 vertices having negative girth 6.

\subsection{Equivalent conjectures}

As we noted, Conjecture~\ref{conj:PlanarToSPC(k)} is equivalent to several other conjectures. Here we mention a few with references for the connections.

The first is about packing signatures in signed graphs. An obvious upper bound for the signature packing number of a signed graph is its negative girth. When this upper bound is achieved, we says $(G,\sigma)$ \emph{packs}. For more on the following conjecture we refer to \cite{NWZ21}. We note that $\mathcal{C}_{10}$ is the class of signed graphs where every cycle is either positive and even or negative and even, i.e., the class of signed bipartite graphs. Similarly, $\mathcal{C}_{11}$ is the class of signed graph where every cycle is either positive and even or negative and odd. Each member of this class can be switched to have all edges negative.

\begin{conjecture}\label{conj:Packing}
Any signed planar graph in the subclass $\mathcal{C}_{10}\cup \mathcal{C}_{11}$ packs.
\end{conjecture}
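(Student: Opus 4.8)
The plan is to prove the conjecture in its equivalent homomorphism form and to drive it by induction on the negative girth, seeded by the four-color theorem. To say that $(G,\sigma)$ \emph{packs} means its signature packing number equals its negative girth, $p(G,\sigma)=g_-(G,\sigma)$. The inequality $p(G,\sigma)\le g_-(G,\sigma)$ is the easy direction (a shortest negative cycle is negative under every equivalent signature and so contains a negative edge of each, and these are pairwise disjoint), so the whole content is the reverse inequality $p(G,\sigma)\ge g_-(G,\sigma)$. By Theorem~\ref{thm:looped-SPC(k)-packing}, $p(G,\sigma)$ is the largest $k$ with $(G,\sigma)\to SPC^{\circ}(k-1)$; hence, writing $g=g_-(G,\sigma)$, it suffices to produce a homomorphism $(G,\sigma)\to SPC^{\circ}(g-1)$. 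For a member of $\mathcal{C}_{10}\cup\mathcal{C}_{11}$ the parity remark following Theorem~\ref{thm:looped-SPC(k)-packing} lets me replace $SPC^{\circ}(g-1)$ by $SPC(g-1)$, so the target reduces to: every $(G,\sigma)\in\mathcal{SP}_{g}$ maps to $SPC(g-1)$.

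I would then set up an induction on $g$ organized by the extended double cover. The base case $g=3$ is the class $\mathcal{SP}_3$ of planar graphs of odd girth at least $3$, bounded by $SPC(2)=PC(2)\cong K_4$; this is exactly the four-color theorem, so the base holds. For the step from $g$ to $g+1$ I would invoke Theorem~\ref{thm:B->EDC(B)} with $\hat{B}=SPC(g-1)$: since $EDC(SPC(g-1))=SPC(g)$, that theorem yields $\mathcal{SP}_{g+1}\to SPC(g)$ — but only under the hypothesis that $SPC(i)$ already bounds $\mathcal{SP}_{i+1}$ for all $i\le g$, which is the statement being proved at level $g+1$. Thus the $EDC$ machinery lifts the packing one level only once the bound at that level is established independently; it organizes the induction but does not by itself close it.

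The substantive step I must therefore supply is a direct argument that, assuming $\mathcal{SP}_{g}\to SPC(g-1)$, also $\mathcal{SP}_{g+1}\to SPC(g)$. I would run a minimal-counterexample and discharging argument in the plane. Take $(G,\sigma)\in\mathcal{SP}_{g+1}$ of minimum order, then minimum size, admitting no homomorphism to $SPC(g)$. By the folding lemma of \cite{NRS13} I may assume $(G,\sigma)$ is embedded so that every facial walk is a shortest negative cycle, of length exactly $g+1$; Euler's formula then forces $(G,\sigma)$ to be sparse. From this sparseness I would extract an unavoidable reducible configuration — typically a vertex of small degree, or a short negative path through degree-two vertices — whose deletion or contraction yields a smaller signed graph still in the relevant class, after checking that the negative girth does not drop below $g+1$. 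The reduced graph maps to $SPC(g)$ by minimality; I then extend that homomorphism, equivalently lift the $g+1$ packing signatures, across the configuration, using the $EDC$ correspondence of Theorem~\ref{thm:B->EDC(B)} to pass between the $(G^*,\sigma_1)\in\mathcal{SP}_{g}$ picture and the $(G,\sigma_{g+1})\to EDC(\hat{B})$ picture. The two parity classes $\mathcal{C}_{10}$ and $\mathcal{C}_{11}$ are handled uniformly after switching to the bipartite, respectively all-negative, normal form.

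I expect the reducibility analysis to be the genuine obstacle, and it is exactly where the present state of the art halts: for $g+1\le 8$ the needed configurations are supplied by the packing arguments of \cite{NY23,NY24} and, through the duality with edge-colouring, by \cite{DKK16,CEKS15,CES15}, but no unavoidable reducible set is known for all $g$. As the girth grows the graphs in $\mathcal{SP}_{g+1}$ become very sparse and the discharging weights must be tuned ever more delicately, so isolating a finite reducible set that works uniformly in $g$ is the crux. An alternative route I would keep in reserve is planar duality: the signature-packing problem dualizes to a $T$-join/$T$-cut packing in the planar dual, casting the conjecture as an integrality (max-flow--min-cut) statement for the associated binary clutter. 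The fractional relaxation always packs, so on that route the obstacle is precisely proving integrality — again the hard combinatorial core, and the source of the link to Seymour's edge-colouring conjecture noted in the text.
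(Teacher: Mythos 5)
There is nothing in the paper for your argument to be checked against: the statement you were given is Conjecture~\ref{conj:Packing}, which the paper presents as an \emph{open} conjecture, equivalent (via Theorem~\ref{thm:looped-SPC(k)-packing}, the parity remark following it, and the duality discussed around \cite{NRS13, NWZ21}) to Conjecture~\ref{conj:PlanarToSPC(k)}. Your reductions are correct and match the paper's own framing: the inequality $p(G,\sigma)\le g_-(G,\sigma)$ is indeed immediate since switching preserves signs of cycles, Theorem~\ref{thm:looped-SPC(k)-packing} converts the reverse inequality into the homomorphism statement $(G,\sigma)\to SPC^{\circ}(g-1)$, and for $\mathcal{C}_{10}\cup\mathcal{C}_{11}$ this is the same as $\mathcal{SP}_{g}$ being bounded by $SPC(g-1)$. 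You are also right, and commendably explicit, that Theorem~\ref{thm:B->EDC(B)} cannot drive the induction: its hypothesis already assumes $SPC(i)$ bounds $\mathcal{SP}_{i+1}$ for all $i$ up to the relevant level (its proof begins by mapping $(G,\sigma)$ to $SPC(l)$), so invoking it with $\hat{B}=SPC(g-1)$ to establish the bound at level $g+1$ is circular. The paper's $EDC$ machinery shows that Conjectures~\ref{conj:PlanarToSPC(k)-restated} and \ref{conj:SP_kBound->SP_k+1Bound} are equivalent; it does not prove either.

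The genuine gap is therefore the entire substantive step you flag yourself: a proof that $\mathcal{SP}_{g+1}$ is bounded by $SPC(g)$, for all $g$, given the bound at level $g$ or otherwise. Your minimal-counterexample plan (folding lemma to make every face a negative $(g{+}1)$-cycle, Euler sparsity, discharging, reducible configurations) names no unavoidable set, proves no configuration reducible, and does not explain how a partial homomorphism to $SPC(g)$ would be extended across a deleted configuration --- extension across even a single small configuration in $SPC(g)$ is nontrivial because the target is vertex-transitive but far from a core-like ``absorbing'' structure, and the known cases $g+1\le 8$ each required deep, case-specific arguments (\cite{NY23, NY24, DKK16, CEKS15, CES15}), several of them resting on the four-color theorem. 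The dual route you hold in reserve (integrality of $T$-join/$T$-cut packing, i.e., the binary-clutter formulation tied to Conjecture~\ref{conj:SeymourEdgeColoring}) is likewise a restatement of the open problem, not an attack on it. In short: everything you prove is already established in the cited literature, and everything new that would be needed is precisely what remains open; your text is an accurate and well-organized research programme, but it is not a proof, and no proof of this statement exists in the paper.
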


Next to mention is a general conjecture of P. Seymour on the edge-chromatic number of planar multigraphs. Recall that fractional edge-chromatic number of a multigraph $G$, denoted $\chi'_f(G)$, is the solution to the linear program that is obtained by writing edge-chromatic number as an integer program and then allowing variable to take real values. Seymour has conjectured a strong connection between fractional edge-chromatic number and edge-chromatic number of planar multigraphs:

\begin{conjecture}\label{conj:SeymourEdgeColoring}
For every planar multigraph $G$ we have $\chi'(G)=\lceil \chi'_f(G)\rceil$.
\end{conjecture}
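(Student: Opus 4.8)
The statement is Seymour's exact edge-colouring conjecture for planar multigraphs, which remains open; accordingly the plan is not a complete proof but a program that routes the conjecture through the machinery of this paper and isolates the single genuinely open step. First I would reduce to the density-dominated case. By Edmonds' matching polytope theorem, $\chi'_f(G)=\max\{\Delta(G),\rho(G)\}$, where $\rho(G)=\max_{S}\frac{2\,|E(G[S])|}{|S|-1}$ is taken over odd vertex sets $S$ with $|S|\ge 3$. When $\Delta(G)\ge\rho(G)$ the identity $\chi'(G)=\lceil\chi'_f(G)\rceil$ is governed by Vizing-type edge-colouring results for planar graphs, so the substance lies in the density-dominated case $\rho(G)>\Delta(G)$. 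Here I would invoke Seymour's own reduction of his conjecture to the class of planar \emph{$r$-graphs}: $r$-regular multigraphs in which every odd edge-cut has size at least $r$. For such $H$ one has $\chi'_f(H)=r$, so it suffices to prove $\chi'(H)=r$, i.e. that every planar $r$-graph is $r$-edge-colourable.

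Next I would pass to the planar dual in the signed-graph language. Fixing a plane embedding of a planar $r$-graph $H$, an $r$-edge-colouring is a partition of $E(H)$ into $r$ perfect matchings; dually this becomes a packing of $r$ signatures of an associated signed dual $(H^{*},\sigma)$, each edge being negative in exactly one of them. This is the planar instance of the correspondence between edge-colourings/flows and signature packings, and by Theorem~\ref{thm:SPC(k)-packing} such a packing of $r$ signatures is exactly a homomorphism $(H^{*},\sigma)\to SPC(r-1)$. Under this correspondence the $r$-graph hypotheses translate precisely into the girth data of $\mathcal{SP}_r$: the $r$-regularity of $H$ makes every face of $H^{*}$ an $r$-cycle, and the condition that every odd cut of $H$ has at least $r$ edges becomes the requirement $g_{ij}(H^{*},\sigma)\ge g_{ij}(C_{-r})$ for each $ij$. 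Thus $(H^{*},\sigma)\in\mathcal{SP}_r$, and $r$-edge-colourability of $H$ is equivalent to $(H^{*},\sigma)\to SPC(r-1)$.

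With this translation in place, Seymour's conjecture for density $r$ becomes exactly the assertion that $\mathcal{SP}_r$ is bounded by $SPC(r-1)$, which is Conjecture~\ref{conj:PlanarToSPC(k)-restated} (equivalently Conjecture~\ref{conj:Packing}). I would then attack this homomorphism bound by induction on $r$ using the extended-double-cover engine of this paper. The base case $r=3$ is the four-colour theorem via $PC(2)=K_4$, and the inductive step is supplied by Theorem~\ref{thm:B->EDC(B)}: granting that $SPC(i)$ bounds $\mathcal{SP}_{i+1}$ for all smaller $i$, a bound for $\mathcal{SP}_{r-1}$ lifts to one for $\mathcal{SP}_r$ through $SPC(r-1)=EDC(SPC(r-2))$. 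Running this ladder reproduces the verified range $r\le 8$, and, if it could be continued for every $r$, would settle all of Seymour's conjecture after the reductions of the first two paragraphs.

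The hard part is precisely the continuation of this ladder. Theorem~\ref{thm:B->EDC(B)} shows that the family of bounds is closed under $EDC$, but it takes as a standing hypothesis that $SPC(i)$ already bounds $\mathcal{SP}_{i+1}$ for every $i$ in the relevant range; it therefore propagates known bounds without, by itself, enlarging the range of $r$. Each new value $r\ge 9$ demands a fresh, genuinely planar argument---a discharging and reducibility analysis in the spirit of the four-colour theorem, now carried out on signed planar graphs of negative girth $r$---to establish directly that $SPC(r-1)$ bounds $\mathcal{SP}_r$. This is the open core: it generalizes the four-colour theorem and is equivalent to the corresponding density case of the very conjecture to be proved, so no route through the present framework can bypass it. Consequently the proposal above is best read as a reduction that pinpoints the obstruction---the unresolved homomorphism bounds $\mathcal{SP}_r\to SPC(r-1)$ for large $r$---rather than as a finished proof.
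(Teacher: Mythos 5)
This statement is Conjecture~\ref{conj:SeymourEdgeColoring}: the paper states it as an open problem of Seymour, offers no proof, and only records (after the conjecture) that its $k$-regular case is ``strongly related'' to Conjecture~\ref{conj:Packing} via planar duality and hence to Conjecture~\ref{conj:PlanarToSPC(k)}, citing \cite{NRS13, NY23}. So there is no proof in the paper to compare against, and you are right not to claim one: your text is a reduction program whose terminal step --- establishing that $SPC(r-1)$ bounds $\mathcal{SP}_r$ for all $r$ --- is exactly the open Conjecture~\ref{conj:PlanarToSPC(k)-restated}. Your middle portion (edge-colourings of a planar $r$-graph dualizing to packings of $r$ signatures, Theorem~\ref{thm:SPC(k)-packing} turning such a packing into a homomorphism to $SPC(r-1)$, the EDC ladder of Theorem~\ref{thm:B->EDC(B)} with the 4CT at its base, and the verified range corresponding to $i\le 7$) is faithful to Sections~6--7 of the paper, and your observation that Theorem~\ref{thm:B->EDC(B)} only propagates bounds under the standing hypothesis that $SPC(i)$ bounds $\mathcal{SP}_{i+1}$, and so cannot by itself enlarge the range of $r$, is exactly right.

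One step of your reduction, however, is misstated and would fail as written: the claim that when $\Delta(G)\ge\rho(G)$ the identity $\chi'(G)=\lceil\chi'_f(G)\rceil$ is ``governed by Vizing-type edge-colouring results.'' For multigraphs, $\chi'_f(G)=\Delta(G)$ would force $\chi'(G)=\Delta(G)$, i.e.\ that $G$ is Class~1, and Vizing-type bounds ($\chi'\le\Delta+\mu$ for multigraphs) give nothing of the sort; the $\Delta$-dominated case is itself a substantive part of Seymour's conjecture, not a known consequence of classical edge-colouring theorems. Likewise, the reduction of the full conjecture to planar $r$-graphs is a nontrivial step that you attribute to Seymour without argument; the paper itself only asserts the connection in the regular case, and only ``under a connectivity condition'' guaranteeing $\chi'_f=k$. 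If you keep this program, you should either cite a precise reduction for the general (non-regular, $\Delta$-dominated) case or flag it as a second open gap alongside the homomorphism bounds $\mathcal{SP}_r\to SPC(r-1)$ for $r\ge 9$.
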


When $G$ is furthermore $k$-regular, then $\chi'_f(G)=k$ holds under a connectivity condition. The validity of Conjecture~\ref{conj:SeymourEdgeColoring} in this special case is strongly related to Conjecture~\ref{conj:Packing} (via duality) and, therefore, to Conjecture~\ref{conj:PlanarToSPC(k)}. For details see \cite{NRS13}, \cite{NY23} and references therein.

%

\subsection{Extension to minor closed families}

A minor of a signed graph $(G, \sigma)$ is a signed graph obtained by any sequence of the following operations:

\begin{itemize}
	\item Deleting a vertex or an edge,
	\item Switching at a vertex,
	\item Contracting a positive edge.
\end{itemize}

A class $\mathcal{C}$ of signed graphs is said to be \emph{minor closed} if for any member $(G,\sigma)$ of $\mathcal{C}$, any minor $(H,\pi)$ of $(G, \sigma)$ is in $\mathcal{C}$. 

As only positive edges can be contracted, sign of the image of a cycle, after contracting an edge of it, remains unchanged. This is the key difference between the notions of minor in signed graphs and graphs. For example, while the class of $K_3$-minor free graphs is the class of all forests, the class of $(K_3, -)$-minor free signed graphs is the class of all balanced signed graphs. So while the former is quite a sparse family of graphs, the latter contains for example $(K_n,+)$ for every $n$. 

We now may observe that in proving Theorem~\ref{thm:B->EDC(B)} the only real use of planarity was that after contracting negative edges of $(G,\sigma_{l+1})$ the result is still planar. We used the assumption that $(G, \sigma)$ maps to $SPC(l)$ to conclude that the contracted graph has the required girth condition,  following which based on a mapping of $(G^{*}, \sigma')$ to $\hat{B}$ we built a mapping of $(G, \sigma_{l+1})$. One may observe that edges negative in $(G, \sigma_{l+1})$ are positive in any of $(G, \sigma_i)$, $i\leq l$. Thus $(G^{*}, \sigma')$ is a minor of $(G,\sigma)$. In other words, if we have a minor closed family $\mathcal{C}$ of signed graphs for which the following conditions hold: for $l\leq k$, any signed graph $(G,\sigma)$ in $\mathcal{C}$ satisfying $g_{ij}(G, \sigma)\geq g_{ij}(SPC(l))=g_{ij}(C_{-l-1})$ maps to $SPC(l)$ and if there is a signed graph $\hat{B}$, $g_{ij}(\hat{B})=l\leq k-1$, for which we have: any signed graph $(G,\sigma)$ in $\mathcal{C}$ satisfying $g_{ij}(G, \sigma)\geq g_{ij}(SPC(l))=g_{ij}(\hat{B})$ maps to $\hat{B}$, then any signed graph $(G,\sigma)$ in $\mathcal{C}$ satisfying $g_{ij}(G, \sigma)\geq g_{ij}(SPC(l))=g_{ij}(EDC(\hat{B)})$ maps to $EDC(\hat{B})$.

The largest minor closed family of signed graphs for which we may expect this method to work is the class of $(K_5,-)$-minor free signed graphs. Indeed B. Guenin has conjectured in an unpublished manuscript that Conjecture~\ref{conj:PlanarToSPC(k)} holds if we replace planarity with no $(K_5,-)$-minor. If so, we may conclude, similarly, that whenever $g_{ij}(\hat{B})=g_{ij}(C_{-l})$ for some $l$ and $\hat{B}$ bounds the subclass of $(K_5,-)$-minor free signed graphs satisfying $g_{ij}(G, \sigma)\geq g_{ij}(C_{-k})$ for some $k$, then $EDC(\hat{B})$ bounds the subclass of $(K_5,-)$-minor free signed graphs satisfying $g_{ij}(G, \sigma)\geq g_{ij}(C_{-l-1})$.



\begin{thebibliography}{88}
	
	
\bibitem{BHNSZ23}
L. Beaudou, P. Haxell, K. Nurse, S. Sen, and Z. Wang.	
\newblock Density of 3-critical signed graphs.
\newblock Manuscript.

\bibitem{BNT15}
L. Beaudou, R. Naserasr, and C. Tardif. 
\newblock Homomorphisms of binary Cayley graphs. 
\newblock Discrete Mathematics 338 (2015), no. 12, 2539–2544. 

\bibitem{B78}
A. Beauville. 
\newblock Complex algebraic surfaces second ed.
\newblock London Mathematical Society Student Texts, vol. 34, Cambridge University Press, Cambridge, 1996. Translated from the 1978 French original by R. Barlow, with assistance from N. I. Shepherd-Barron and M. Reid.


\bibitem{BM76}
J. A. Bondy, and U. S. R. Murty. 
\newblock Graph theory with applications. 
\newblock American Elsevier Publishing Co., Inc., New York, 1976. x + 264 pp.

\bibitem{BCN89}
A. E. Brouwer, A. M. Cohen, and  A. Neumaier. 
\newblock Distance-regular graphs. 
\newblock Ergebnisse der Mathematik und ihrer Grenzgebiete (3) [Results in Mathematics and Related Areas (3)], 18. Springer-Verlag, Berlin, 1989. xviii+495 pp.

\bibitem{CEKS15}
M. Chudnovsky, K. Edwards, K.  Kawarabayashi, and P. Seymour.
\newblock Edge-colouring seven-regular planar graphs.
\newblock Journal of Combinatorial Theory. Series B 115 (2015), 276--302.


\bibitem{CES15}
M. Chudnovsky, K. Edwards, and P. Seymour.
\newblock Edge-colouring eight-regular planar graphs.
\newblock Journal of Combinatorial Theory. Series B 115 (2015), 303--338.

\bibitem{D12}
I. V. Dolgachev.
\newblock Classical algebraic geometry. A modern view. (English)
\newblock Cambridge: Cambridge University Press (2012)

\bibitem{DKK16}
Z. Dvořák, K. Kawarabayashi, and D. Král'.
\newblock Packing six T-joins in plane graphs.
\newblock Journal of Combinatorial Theory. Series B 116 (2016), 287--305.



\bibitem{FNX23}
F. Foucaud, R. Naserasr, and R. Xu.
\newblock Extended double covers and homomorphism bounds of signed graphs. 
\newblock The Electronic Journal of Combinatorics 30 (2023), Paper No.3.31, 22 pp.


\bibitem{GG55}
R. E. Greenwood, and A. M. Gleason.
\newblock Combinatorial relations and chromatic graphs.
\newblock Canadian Journal of Mathematics 7 (1955), 1-7.

\bibitem{GNRT23}
A. Gujgiczer, R. Naserasr, Rohini S, and S Taruni.
\newblock Winding number and circular 4-coloring of signed graphs.
\newblock Available at: https://cnrs.hal.science/hal-03840738/document



\bibitem{H75}
F. Harary. 
\newblock Four difficult unsolved problems in graph theory. 
\newblock Recent advances in graph theory (Proc. Second Czechoslovak Sympos., Prague, 1974), pp. 249–256. Academia, Prague, 1975.

\bibitem{JMNNQ23}
A. Jimenez, J. Mcdonald, R. Naserasr, K. Nurse, and D. A. Quiroz. 
\newblock Balanced-chromatic number and Hadwiger-like conjectures.
\newblock https://hal.science/hal-04174372

\bibitem{M76}
G. H. J. Meredith.
\newblock Triple transitive graphs.
\newblock Journal of the London Mathematical Society (2) 13 (1976), no. 2, 249–257.


\bibitem{N07}
R. Naserasr. 
\newblock Homomorphisms and edge-colourings of planar graphs. 
\newblock Journal of Combinatorial Theory. Series B 97 (2007), no. 3, 394–400.

\bibitem{N13}
R. Naserasr. 
\newblock Mapping planar graphs into projective cubes. 
\newblock Journal of Graph theory 74(3) (2013) 249-259.



\bibitem{NRS13}
R. Naserasr, E. Rollov\'{a}, and E. Sopena.
\newblock Homomorphisms of planar signed graphs to signed projective cubes. 
\newblock Discrete Mathematics and Theoretical Computer Science 15 (2013), no. 3, 1–11.


\bibitem{NRS15}
R. Naserasr, E. Rollov\'{a}, and E. Sopena.
\newblock Homomorphisms of signed graphs.
\newblock Journal of Graph theory 79(3) (2015), 178--212.

\bibitem{NSZ21}
R. Naserasr, E. Sopena, and T. Zaslavsky.
\newblock Homomorphisms of signed graphs: an update. 
\newblock European Journal of Combinatorics 91 (2021), Paper No. 103222, 20 pp.

\bibitem{NW23}
R. Naserasr, and Z. Wang.
\newblock Signed bipartite circular cliques and a bipartite analogue of Grötzsch's theorem. 
\newblock  Discrete Mathematics 346 (2023) Paper No. 113604, 12 pp.

\bibitem{NWZ21} 
R. Naserasr, Z. Wang, and X. Zhu.
\newblock Circular chromatic number of signed graphs.
\newblock The Electronic Journal of Combinatorics 28(2)  (2021), paper number 2.44. 40 pages.


\bibitem{NY23}
R. Naserasr, and W. Yu. 
\newblock Packing signatures in signed graphs.
\newblock  SIAM Journal on Discrete Mathematics 37 (2023) 2365-2381.


\bibitem{NY24}
R. Naserasr, and W. Yu. 
\newblock On the packing number of antibalanced signed simple planar graphs of negative girth at least 5. 
\newblock Journal of Combinatorial Optimization 47 (2024), no. 2, Paper no. 9.


\bibitem{P92}
C. Payan. 
\newblock On the chromatic number of cube-like graphs. 
\newblock Discrete Mathematics 103 (1992), no. 3, 271–277. 


\bibitem{S1884}
C. Segre.
\newblock ``Etude des différentes surfaces du 4e ordre à conique double ou cuspidale (générale ou décomposée) considérées comme des projections de l'intersection de deux variétés quadratiques de l'espace à quatre dimensions". 
\newblock Mathematische Annalen, Springer Berlin / Heidelberg, 24: 313–444  (1884). doi:10.1007/BF01443412, ISSN 0025-5831.

\bibitem{S1951}
B. Segre.
\newblock ``On the inflexional curve of an algebraic surface in $S_4$".
\newblock The Quarterly Journal of Mathematics, Second Series 2 (1): 216–220 (1951). doi:10.1093/qmath/2.1.216, ISSN 0033-5606, MR 0044861


\bibitem{S87}
	M. Sokolova.
\newblock	The chromatic number of extended odd graphs is four.
\newblock	\v Casopis P\v est. Mat., 112 (1987), pp. 308-311. \url{https://dml.cz/bitstream/handle/10338.dmlcz/118312/CasPestMat_112-1987-3_11.pdf}



\end{thebibliography}
\end{document}